\documentclass[a4paper,10pt]{article}
\usepackage{amsmath,amssymb, amsthm, graphicx, a4wide, amsopn}
\usepackage{arydshln}
\usepackage{enumerate, framed}
\usepackage[explicit]{titlesec}
\usepackage[dvipsnames]{xcolor}
\usepackage{nicefrac}

\bibliographystyle{plain}

\newif\ifdraft		\draftfalse
\newif\ifshowkeys	\showkeysfalse
\newif\ifswap		\swaptrue	

\allowdisplaybreaks

\usepackage{tikz}
\usetikzlibrary{matrix,arrows,decorations.pathmorphing}
\usepackage{tikz-cd}

\usetikzlibrary{arrows}

\tikzset{
    arrow/.style = {-stealth},
}


\linespread{1.0}


\usepackage{typearea}

\areaset[current]{376pt}{680pt} 
\setlength{\marginparwidth}{7em}%
\setlength{\marginparsep}{2em}%
\setlength{\headsep}{6em}%

\newcommand{\gyzFormat}[1]{#1}

\ifswap
\newtheoremstyle{mythmstyle}
  {\topsep}
  {\topsep}
  {\normalfont}
  {}
  {}
  {{\bf .}}
  {.7em}
  {{{\bfseries\thmnumber{#2}}~\gyzFormat{\bfseries\thmname{#1}}{\normalfont\gyzFormat{\thmnote{ (#3)}}}}}
\else
\newtheoremstyle{mythmstyle}
  {\topsep}
  {\topsep}
  {\normalfont}
  {}
  {}
  {\bfseries.}
  {.7em}
 {{\gyzFormat{\bfseries\thmname{#1}}~{\bfseries\thmnumber{#2}}{\normalfont\gyzFormat{\thmnote{ (#3)}}}}}
\fi

\theoremstyle{mythmstyle}  
\newtheorem{theorem}{Theorem}[section]
\newtheorem{definition}[theorem]{Definition}
\newtheorem{lemma}[theorem]{Lemma}

\newtheorem{remark}[theorem]{Remark}

\newtheorem{proposition}[theorem]{Proposition}
\newtheorem{corollary}[theorem]{Corollary}
\newtheorem{claim}[theorem]{Claim}

\newtheorem{example}[theorem]{Example}

\newtheoremstyle{mynotestyle}
  {\topsep}
  {\topsep}
  {\normalfont}
  {}
  {}
  {\bfseries.}
  {.7em}
  {{{\bfseries\usefont{T1}{yvtj}{b}{n}\thmnumber{#2}}{\normalfont\gyzFormat{\thmnote{ (#3)}}}}}

\theoremstyle{mynotestyle}  

\renewenvironment{proof}[1][\unskip]{%
\par
\noindent
\textbf{Proof #1.}
\noindent}
{\hfill$\blacksquare$

\medskip}



\let\<=\langle
\let\>=\rangle

\def\land{\wedge}       \def\lor{\vee}
\def\Land{\bigwedge}    
\def\lnot{\neg}

\def\[#1\]{\begin{align}#1\end{align}}

\def\endef{\hfill$\square$}

\DeclareMathOperator{\Rng}{ran}

\DeclareMathOperator{\Dom}{dom}

\DeclareMathOperator{\dom}{dom}
\DeclareMathOperator{\Cg}{Cg}

\def\vd{\vdash}
\def\nvd{\nvdash}

\let\phi=\varphi
\let\theta=\vartheta
\let\epsilon=\varepsilon

\let\models=\vDash

\renewcommand{\setminus}{\smallsetminus}


\def\gA{\mathfrak{A}}
\def\gB{\mathfrak{B}}
\def\gC{\mathfrak{C}}

\def\gM{\mathfrak{M}}
\def\gN{\mathfrak{N}}

\def\cL{\mathcal{L}}


\def\TITLE{Logic families}
\def\AUTHOR{Hajnal Andr\'eka\thanks{Alfr\'ed R\'enyi Institute of Mathematics, Budapest}\and Zal\'an Gyenis\thanks{Jagiellonian University, Krak\'ow}\and Istv\'an N\'emeti${}^{*}$\and Ildik\'o Sain${}^{*}$}
\def\DATE{\today}
\def\ABSTRACT{A logic family is a bunch of logics that belong together in some way. 
		First-order logic is one of the examples. Logics organized into a structure occur in abstract model theory, institution theory and in algebraic logic. Logic families play a role in adopting methods for investigating sentential logics to first-order like logics. We thoroughly discuss the notion of logic families as defined in the recent Universal Algebraic Logic book.
	 } 
\def\KEYWORDS{Logics, general logics, universal algebraic logic, abstract algebraic logic, abstract model theory, institutions, Lindenbaum--Tarski formula algebra, tautological congruence, conditionally free algebra, free product of algebras}


\def\LL{\mathbf{L}}
\def\F{\mathfrak{F}}

\def\SP{\mathbf{SP}}
\def\HSP{\mathbf{HSP}}
\def\ACSA{\Delta}				
\DeclareMathOperator{\Alg}{Alg}
\DeclareMathOperator{\Sig}{Sig}

\DeclareMathOperator{\mng}{mng}
\DeclareMathOperator{\Hom}{Hom}
\DeclareMathOperator{\Cn}{Cn}

\def\Fr{\mathfrak{Fr}}

\def\K{\mathsf{K}}
\def\M{\mathsf{M}}
\def\CPL{\mbox{$\mathcal C\mathcal P\mathcal L$}}
\def\FOL{\mbox{$\mathcal F\mathcal O\mathcal L$}}

\def\Mng{\mbox{Mng}}

\def\Id{\mbox{\sf Id}}

\def\si{\mbox{\sf si}}
\def\Cr{\mbox{\sf Cr}}

\def\Trm{\mbox{\sf Trm}}

\def\TT{\mathbf{T}}


\begin{document}
	
 \title{{\bf \TITLE}}
 \author{\AUTHOR} \date{\DATE}
 \maketitle
 \thispagestyle{empty}
	
 \begin{abstract}
 \ABSTRACT
 \vspace{5mm}

		\noindent {\bf Keywords:} \KEYWORDS.
 \end{abstract}
 \vspace{5mm} \normalsize

\section{Introduction}\label{sec:intro}

This paper is about a notion of a family of logics. A family of logics is a bunch of logics that belong together in some way. A typical example is first-order logic. One defines first-order logic by first specifying the similarity type and after this one defines first-order logic of that similarity type. Thus, the similarity type is  a parameter in defining first-order logic. The present paper contains a thorough classical style discussion of a notion of logic families which focuses on the set of atomic formulas being a parameter.

Several notions for logic families occur in the logic literature, e.g., \cite{Ba74}, \cite{Diabook}, \cite{HooSL00}, \cite{AGyNS2022}. In all these, one relies on a general notion of logics, and then a logic family is a class of such logics organized into some structure.
Before talking about the motivation for logic families, let us begin with saying some words about their constituents. 
Below we list some situations which call for a general notion of a logic.

----- Often, one says that one property of a logic follows from another one. For example, it is widely known that there are close connections between Craig’s interpolation theorem, Robinson’s consistency theorem and Beth's definability theorem in first-order logic (e.g., one can prove one theorem from the other). However, it is just three 
facts of mathematics that these properties hold for first-order logic, one needs to give meaning to statements concerning their relationship. 

----- The starting point of algebraic logic is the discovery that in many cases, a natural logical property corresponds in some way to a natural algebraic property \cite{Pigo72}. 
For making such correspondences meaningful, one needs a general notion of logics. 

----- The main aim of abstract model theory was to find a useful extension of first-order logic: one which is stronger while manageable and expressive \cite{Ba74}. For such an endeavour one needs a general notion of logics among which one wants to select a good one.

----- In computing theory, especially in formal specification theory, there was an explosion of formal logical systems used, and so there was a need for a uniform treatment of specification concepts. Institution Theory was created to fulfill this need, but it reaches much further, it is an axiomatic treatment of classical and non-classical model theories.%
\footnote{See Goguen-Burstall \cite{GB92}, Diaconescu \cite{Diabook}, \cite{Dia2022}.}

As seen from the above examples, even those logicians  for whom first-order logic is the one and only may use a general notion of logics in order to make ideas about first-order logic meaningful. Let us return to logic families.

In Abstract Model Theory, families of logics are used for doing away with variables.%
\footnote{See \cite{Ba74}. Papers using the framework of Abstract Model Theory are, e.g., \cite{Mu81}, \cite{LaMa15}.}
Institution Theory 
relies on a general notion of logics $\langle F,M,\models\rangle$ where $F$ is the set of formulas, $M$ is the class of models and $\models$ is the validity relation between them. An institution is a class of such logics organized into a category $\Sig$. Thus, the notion of logic family is central in Institution Theory.%
\footnote{See \cite{Diabook}, \cite{SaTa12}, \cite{Vou}.}  Abstract Algebraic Logic and Universal Algebraic Logic are two branches of Algebraic Logic that deal with general notions of logic. Both grew out of, and was influenced by, Alfred Tarski's work. The first one concentrates on sentential logics  while the second concentrates more on predicate-like logics. However, their methods are very similar to each other.%
\footnote{See Blok-Pigozzi \cite{BP}, Font \cite{Font}, Font-Jansana \cite{FJ94c}, and \cite{AGyNS2022}.} 
One benefit of logic families in these branches is aesthetic: the cardinality of the set of formulas often shows up in theorems about logics, but this cardinality has no significant meaning. 
By using logic families instead of logics, the corresponding theorems can do away with these cardinalities. Besides this, there are theorems in which the use of logic families is essential. For example, \cite[Theorem 4.3.44]{AGyNS2022} says that a logic family has the weak Beth definability property if and only if in its counterpart category the class of maximal elements generates the whole category by limits. In the counterpart category, all members of the logic family are taken into account and being maximal here cannot be expressed by using single logics instead of the whole family.

The recent book Universal Algebraic Logic \cite{AGyNS2022} aims at presenting the most important theorems and methods of Universal Algebraic Logic. The key feature of logics treated in Abstract Algebraic Logic is that they are substitutional%
\footnote{The word structural is used in Abstract Algebraic Logic, see, e.g., \cite{BP}, \cite{CD06}, \cite{Font}.}
 which means that the atomic formulas do not carry any other information than how many of them are there. First-order logic, clearly, is not substitutional. 
 Yet, most of the methods of Abstract Algebraic Logic can be applied to first-order logic, after some modifications. Two moves are crucial in these modifications. One move is generalizing the notion of being substitutional to being conditionally substitutional, the other move is refining the notion of a logic family%
 \footnote{This notion is called ``general logic'' in \cite{AGyNS2022}, \cite{AKNS}, Font-Jansana \cite{FJ94c}, Hoogland \cite{HooSL00}.} that is used in Abstract Algebraic Logic. Thanks to these two moves, the algebraic methodology á la \cite{AGyNS2022} can be directly applied to first-order logic. Thus the definition of a logic family in [1] is important in moving from sentential logics to predicate-like ones. It may deserve a thorough investigation.

The content of this paper is an addition to \cite{AGyNS2022} while it is self-contained. We expand and explain some features of the definition of logic families.  We carry on a classical style discussion of \cite[Definition 3.3.27]{AGyNS2022}: we simplify it, we present several equivalent statements for the items occurring in it, we present weaker and stronger statements for these items; we show that under some hypotheses, some of the distinct statements become equivalent etc. We hope that these investigations give insight concerning the definition of a logic family. 

The structure of the paper is as follows.

In section \ref{sec:logics}, we briefly introduce our formal notion of a logic alongside with some of its basic properties we will need 
in the paper. We also introduce classical propositional and first-order logics in this framework.

In section \ref{sec:subs}, we define two characteristic subclasses of logics. In the first class, atomic formulas can be replaced (substituted) with any other formulas without affecting meaning, in the second class, this can be done only with formulas satisfying some conditions. Propositional logics usually fall into the first class, while classical first-order logic is a typical example of the second class.

Section \ref{sec:families} is the main part of the paper, it contains a detailed discussion of the notion of logic families. 
Subsection \ref{GLdef-subsec} recalls the definition of a logic family from \cite{AGyNS2022}, with slight modifications. This definition contains five items (1)--(5). Of these, condition (4) is about union of signatures, it is the one that needs most explanation. 
Subsection \ref{GLdef-subsec} contains a discussion of items (1)-(3) and (5).
Subsection \ref{disc4-subsec} discusses condition (4) in the general case. For showing the intuition behind (4), we present three consequently weaker conditions (4a), (4b) and (4). Of these, (4a) is practically useless because it almost never is satisfied (though, it reflects the intuition best), while condition (4b) is equivalent to the condition used in \cite[Def.3.3.27]{AGyNS2022}. Theorem \ref{thm:4b} lists six conditions of different characters all of which are equivalent to (4b).  Theorem \ref{gen4} lists seven conditions that are equivalent to (4) and analogous to the ones that characterize (4b). These theorems shed light on why our (4) is weaker than (4b) of \cite{AGyNS2022}. 
Subsection \ref{subs:cs4} investigates characterizations that hold only for conditionally substitutional logic families. These give a hint for why condition (4) may be more useful in the definition than (4b). At the end of the subsection it is shown that the notion of logic families is not restrictive among conditionally substitutional logics: each conditionally substitutional logic is a member of some conditionally substitutional logic family, while there is a logic that is member of no logic family at all (Theorem \ref{thm:restr}).

Section \ref{sec:exa} is also an important part of the paper. In this section we prove that both classical propositional and first-order logics are members of natural logic families. It may come as a surprise that the systems of finite-variable fragments of first-order logic as defined in this subsection do not satisfy condition (4) (Theorem \ref{thm:nofamily}).

\bigskip

\section{Logics in a general setting}\label{sec:logics}
In this section, we briefly introduce the formal notion of a logic alongside with some of its basic properties we will need 
in the paper. We also introduce classical propositional and first-order logics in this framework.

\subsection{Definition of a logic and two examples}\label{sec:logic}
In this subsection, we recall a general notion of logic from \cite{AGyNS2022} and we cast classical propositional and first-order logics as examples.
Similar but different definitions of a logic 
(or, formal system) can be found in, e.g., Blok-Pigozzi \cite{BP}, Henkin-Monk-Tarski \cite[section 5.6]{HMT}, Tarski-Givant \cite[section 1.6]{TG}. 
Reasons for defining such a notion can be found in the introduction. 
What we define below is called in \cite{AGyNS2022} a compositional logic having connectives, see \cite[Defs. 3.1.3, 3.3.1, 3.3.2]{AGyNS2022}. There are some differences though, we mention these in the discussion. For the intuition behind the definition of a logic below we refer to \cite[Sec. 3.1]{AGyNS2022} and also to the two examples that follow the definition.

\begin{definition}[logic]\label{def:logic}
	By a logic we understand a tuple $\cL = \< F, M, \mng, \models\>$, 
	where (i)--(iv) below hold.
	\begin{enumerate}[(i)]\itemsep-2pt
		\item  $F$, called the set of formulas, is
		the universe of a term-algebra%
		\footnote{For the definition of term-algebra see, e.g., \cite[Def. 2.4.1]{AGyNS2022}}
		$\F(P, \Cn)$, which is an absolutely free algebra over $P$ in the type of $\Cn$.	
		The domain of the type $\Cn$ is called the set of logical connectives 
		and $P$ is called the set of atomic formulas. 
		We also denote the set of logical connectives with $\Cn$, and the algebra
		$\F$ is many times referred to as the formula algebra of $\cL$.
		\item $M$ is a class, called the class of models.
		\item $\models\;\subseteq M\times F$ is a relation between models and formulas. It is called the validity relation.
		We use the notation $\gM\models\phi$ instead of $\<\gM, \phi\>\in\;\models$.
		\item $\mng$ is called the meaning function. It is a function with domain $M\times F$ (we write $\mng_{\gM}(\phi)$ in 
		place of $\mng(\gM,\phi)$) such that
		\begin{itemize}
			\item $(\forall \phi, \psi\in F)\; (\forall \gM\in M)\;
	\big(\mng_{\gM}(\phi) = \mng_{\gM}(\psi) \text{ and } \gM\models\phi \big)
	\Longrightarrow \gM\models\psi$.
			\item We assume \emph{compositionality}, that is, $\mng_{\gM}$ is a homomorphism 
			from the formula algebra $\F$ to some algebra of type $\Cn$.
		\end{itemize}
	\end{enumerate}
	\endef
\end{definition}

Many well-known logics can be cast in our framework, \cite{AGyNS2022} contains numerous examples.
Throughout this paper we illustrate our definitions with two examples only: classical propositional logic and first-order logic.

\begin{example}[Classical propositional logic \CPL]\label{def:proplogic}
	We define classical propositional logic \CPL\ as follows.	
	\begin{itemize}
		\item The set of logical connectives is $\{\land, \lnot,\bot\}$, where $\land$ is binary, $\lnot$ is unary
		and $\bot$ is a constant; and $P$ is an arbitrary set (of atomic propositions) containing no proper term written 
		up with these connectives\footnote{For the name proper term see the next subsection.}. 
		Then the set $F_C$ of formulas is the universe of the corresponding term-algebra.
		\item The models, elements of $M_C$, are functions $\gM:P\to\{0,1\}$ assigning $0$ (false) and $1$ (true) to each atomic proposition $p\in P$. 
	\item We extend any model $\gM:P\to \{0,1\}$ to the set of all formulas by
	\begin{equation}
		\gM(\bot) = 0,\quad \gM(\lnot\phi) = 1-\gM(\phi),\quad \gM(\phi\land\psi)=
		\gM(\phi)\cdot\gM(\psi).
	\end{equation}
	We let the meaning of a formula $\phi$ in a model $\gM$ be $\gM(\phi)$, i.e., $\mng_{C}(\gM,\phi)=\gM(\phi)$.
	\item A formula is valid in a model iff its meaning is ``true'', i.e., $\gM\models_C\phi\Longleftrightarrow\gM(\phi)=1$.
\end{itemize}
Thus classical propositional logic \CPL\ in our formalism is $$\CPL=\langle F_C, M_C, \mng_C, \models_C\rangle.$$
\end{example}

\bigskip
\noindent Next we turn to first-order logic, one of the key examples of our framework.
\bigskip

\begin{example}[First-order logic $\FOL = \FOL_t(V)$]\label{def:FOL}
	We formalize first-order logic of a non-empty relational 
	similarity type $t$ using variables $V$ in
	the standard way, as follows. The set $P_t(V)$ of atomic formulas is
	\begin{equation}
		P_t(V) = \{ r(v_{1},\ldots,v_{{n}}) :\; 
		r\in\dom(t),\; t(r)=n,\; v_{1},\dots,v_n\in V\}.\label{eq:FOLPtV}
	\end{equation}
	The set $F_t(V)$ of formulas is generated by $P_t(V)$ using the connectives 
	$\{\land,\lnot,\bot,\exists v, v=w : v,w\in V\}$, 
	where $\land$ is binary, $\lnot, \exists v$ are unary, and 
	$v=w$ and $\bot$ are constants. 
	Models $\gM=\< M,r^{\gM}\>_{r\in\dom(t)}$
	and satisfaction $\models^t$ are defined in the usual manner see, e.g., Chang-Keisler \cite[section 1.3]{CK}, or Enderton \cite[sections 2.0--2.3]{End}. Finally, 
	\begin{align}
		\mng_{\gM}(\phi)=\{ h\in{}^{V}M : \gM\models\phi[h]\},
	\end{align}
	where ${}^{V}M$ denotes the set of all functions from $V$ to $M$, and $\gM\models\phi[h]$ denotes that $\phi$ is true in $\gM$ under the evaluation $h$ of the variables.	We define 
	\begin{align}
		\FOL_t(V) = \< F_t(V), M_t, \mng^t, \models^t \>.
	\end{align}
	We note that the pair $\<t,V\>$ and the set $P_t(V)$ of atomic formulas 
	determine each other. 
\emph{Classical first order logic} is $\FOL_t(V)$ where the set $V$ of variables is infinite. When $V$ is finite, $\FOL_t(V)$ is called a \emph{finite-variable fragment of first-order logic}. We will see that these finite-variable fragments have properties different from those of classical first-order logic and so they will serve as insightful counterexamples to statements about the latter.	
\end{example}

\subsection{Some basic properties of logics}\label{ssec:basics}
In this subsection, we briefly recall some basic definitions connected to logics and some basic results that we will use in the paper.
Let $\cL$ be a logic in the sense of Definition \ref{def:logic}. The sets of connectives, atomic formulas, formulas, models etc of $\cL$ will be denoted by $\Cn(\cL), P(\cL), F(\cL), M(\cL)$ etc. Sometimes we write $\Cn_{\cL}, P_{\cL}$ etc.\ or we omit reference to $\cL$ when it is clear from context.

Recall from Definition \ref{def:logic} that the \emph{formula algebra} $\F$ of $\cL$ is the term-algebra over $P$ of similarity type $\Cn$. This means that the universe of $\F$ is the set of terms constructed by the logical connectives from the atomic formulas, e.g., $\lnot p$ is a term if $\lnot$ is a unary connective and $p$ is an atomic formula. The operations of the term-algebra are the natural ones imitating this construction, e.g., there is a unary operation in $\F$ denoted by $\lnot$. Note that the operations of the formula algebra are denoted by the connectives. 

In the definition of a logic we require that $\F$ is an absolutely free algebra over $P$. This means that any function $f:P\to A$ where $A$ is the universe of an algebra $\gA$ of similarity type $\Cn$ can be extended to a homomorphism $\bar{f}:\F\to\gA$. However,  the term-algebra
$\F$ is an absolutely free algebra over $P$ if and only if no element of $P$ is a ``proper'' term written up by $\Cn$ from other elements of $P$, i.e., if $p\notin\Trm_{\Cn}(P\setminus\{ p\})$, for all $p\in P$. We will refer to this property briefly as $P$ does not contain proper terms by $\Cn$. Similarly, we need that $P$ and $\Cn$ are determined by $\F$, this holds when $\F$ is an absolutely free algebra over $P$. This is why the condition that no element of $P$ is a proper term by $\Cn$ occurs at some places in the paper.

Define a congruence relation $\sim_{\cL}$ on the formula algebra $\F$ by
\begin{equation}
	(\forall \phi,\psi\in F)\big( \phi\sim_{\cL}\psi \Longleftrightarrow
	(\forall \gM\in M) \mng_{\gM}(\phi) = \mng_{\gM}(\psi) \big),
\end{equation}
that is, $\sim_{\cL} \;=\; \bigcap\{ \ker(\mng_{\gM}):\; \gM\in M\}$,
where $\ker(f)$ denotes the \emph{kernel} of the function $f:A\to B$, i.e., $\ker(f) = \{ \< a, b\> : a, b\in A, f(a)=f(b)\}$. 
$\sim_{\cL}$ is called the \emph{tautological congruence} relation of 
$\cL$. It is a congruence relation because all the meaning functions are homomorphisms.
Two formulas are related by the tautological congruence iff their meanings coincide in all models, i.e., if no model can distinguish them based on their meanings.

The quotient algebra $\F/\!\!\sim_{\cL}$ of $\F$ factorized
by $\sim_{\cL}$ is called the 
\emph{tautological formula algebra}, or the \emph{Lindenbaum--Tarski algebra
	of $\cL$}. 
In many logics, e.g., in $\CPL$ and $\FOL$, the tautological congruence relation corresponds to the set of formulas valid in all models. In more detail, 
coincidence of meanings of formulas and the validity relation are interdefinable in these logics. For example, $\mng_{\gM}(\phi)=\mng_{\gM}(\psi)$ holds iff $\gM\models \phi\leftrightarrow\psi$ in $\CPL$.%
\footnote{In general logic, this property of a logic is called the filter property see, e.g., \cite[Def. 3.3.18]{AGyNS2022}}.

In any logic, there are certain properties that hold for all formulas in place of the atomic ones. For example in $\FOL$, the formula $\forall x[\exists yR(x)\to R(x)]$ does not hold for all formulas in place of $R(x)$, but $R(x)\to R(x)$ does. This motivates the following definition.

A function $s:P\to F$ is called a formula-substitution, or just a \emph{substitution}. Since $s$ extends uniquely to a homomorphism $\bar{s}:\F\to\F$, all homomorphisms $h:\F\to\F$ are also called substitutions. The \emph{substitution-invariant part} of $\sim_{\cL}$ is defined as
\begin{equation}\label{si}
	\si(\sim_{\cL}) \ =\  \{\<\phi,\psi\> : (\forall h:\F\to\F)\ \< h(\phi),h(\psi)\>\in\ \sim_{\cL}\} .
\end{equation}

For a model $\gM\in M$, the algebra $\mng_{\gM}(\F)$ is called the \emph{concept algebra} or the\emph{ meaning algebra} of $\gM$. The name concept algebra reflects that the meanings of formulas in $\gM$ can be considered to be the concepts of $\gM$ expressible in $\cL$. Let $\Mng(\cL)$ and $\Alg_m(\cL)$ denote the classes of all meaning functions and all meaning algebras, respectively:
\begin{equation}\label{mngalg}
	\Mng(\cL)=\{\mng_{\gM} : \gM\in M\}\quad\mbox{ and }\quad\Alg_m(\cL) = \big\{\mng_{\gM}(\F):\; \gM\in M \big\}.
\end{equation}
Both $\Mng(\cL)$ and $\Alg_m(\cL)$ are counterparts of models. Usually, but not always,  $\mng_{\gM}$ has all the information about the model, i.e., if $\gM,\gN$ are distinct, then $\mng_{\gM}$ and $\mng_{\gN}$ are also distinct. A concept algebra retains all information the meaning function has, except that 
the information about which formula is atomic is ``forgotten'' (deliberately). 

Theorem \ref{fmalg} below connects the notions introduced so far. In the theorem, just as in the paper, two notions related to that of the term-algebra will be extensively used: the notions of free algebras and conditionally free algebras. In the following, we omit reference to the similarity type since this can be determined by the algebra. Let $\F(X)$ denote the term-algebra of some similarity type over a set $X$, let $\K$ be a class of algebras of the same similarity type and let $S\subseteq F\times F$ be a binary relation over the universe $F$ of $\F(X)$. Let $\Hom(\F,\K)$ denote the class of all homomorphisms from $\F$ to members of $\K$, and let 
\[\Hom(\F,\K,S) = \{ h\in\Hom(\F,\K) : S\subseteq\ker(h)\} .\]
The \emph{$X$-freely generated $\K$-free algebra} $\Fr(\K,X)$ is defined  as
\[ \Fr(\K,X)=\F(X)\slash\Cr(\K,X),\text{ where } \Cr(\K,X)=\bigcap\{\ker(h) : h\in\Hom(\F(X),\K)\} .\]
Similarly, let
\[\Cr(\K,X,S)=\bigcap\{\ker(h) : h\in\Hom(\F(X),\K,S)\}\]
and the \emph{$X$-freely generated conditionally $\K$-free algebra with conditions  $S$} is defined as
\[\Fr(\K,X,S) =  \F(X)\slash\Cr(\K,X,S) .\]
For a class $\K$ of similar algebras, let $\SP\K$ and $\HSP\K$ denote the classes of algebras obtained from elements of $\K$ by the algebraic constructions of products and subalgebras, and the algebraic constructions of products, subalgebras and homomorphisms, respectively. The term-algebra and the free algebra are special cases of the conditionally free algebra, namely $\Fr(\K,X)=\Fr(\K,X,\emptyset)$ and $\F(X)=\Fr(\M,X)$ where $\M$ is the class of all algebras of the similarity type of $\F(X)$. We will use that $\Fr(\K,X)=\Fr(\HSP\K,X)$ and $\Fr(\K,X,S)=\Fr(\SP\K,X,S)$. For more about the conditionally free algebras see \cite[0.4.65]{HMT}.

Theorem \ref{fmalg} below states that the tautological formula algebra of any logic is the $P$-freely generated $\sim_{\cL}$-conditionally $\Alg_m(\cL)$-free algebra,%
\footnote{In \cite[Cor. 3.3.16]{AGyNS2022} this is stated only for conditionally substitutional logics, but the statement holds for all logics.}
and the formula algebra factored by the substitution-invariant part of $\sim_{\cL}$ is the $P$-freely generated $\Alg_m(\cL)$-free algebra. For $\phi,\psi\in F$, let $[\phi=\psi]$ denote the equation in the language of $\Alg_m(\cL)$ where the elements of $P$ are the algebraic variables.  Theorem \ref{fmalg}\eqref{eq} states that the substitution-invariant part of $\sim_{\cL}$ is the equational theory of $\Alg_m(\cL)$ such that the atomic formulas are the algebraic variables.
Thus, the equational theory of the concept algebras can be recovered from the tautological congruence by syntactical means, in any logic.

\begin{theorem}\label{fmalg} Let $\cL$ be any logic in the sense of Definition \ref{def:logic}. Then (i)-(iii) below hold.
\begin{enumerate}[(i)]\itemsep-2pt
	\item $\F(\cL)\slash\!\sim_{\cL} = \Fr(\Alg_m(\cL),P(\cL),\sim_{\cL})$
	\item $\F(\cL)\slash\si(\sim_{\cL}) = \Fr(\Alg_m(\cL),P(\cL))$
	\item\label{eq}  $\si(\sim_{\cL}) = \{\<\phi,\psi\> : \Alg_m(\cL)\models[\phi=\psi]\}$
\end{enumerate}
\end{theorem}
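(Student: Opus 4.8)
The plan is to prove the three items by unwinding the definitions of the conditionally free construction and the tautological congruence, and noticing that the abstract definitions of $\Cr(\K,X)$ and $\Cr(\K,X,S)$ are tailored to exactly match $\si(\sim_\cL)$ and $\sim_\cL$. The starting observation is that every meaning function $\mng_\gM\colon\F\to\mng_\gM(\F)$ is, by compositionality (item (iv) of Definition \ref{def:logic}), a surjective homomorphism onto a meaning algebra, so each $\mng_\gM$ belongs to $\Hom(\F,\Alg_m(\cL))$, and conversely each meaning algebra is the image of some such $\mng_\gM$. Thus the family $\{\mng_\gM : \gM\in M\}$ is a distinguished supply of homomorphisms from $\F$ into $\Alg_m(\cL)$.

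For item (i), I would compute $\Cr(\Alg_m(\cL),P,\sim_\cL)=\bigcap\{\ker(h) : h\in\Hom(\F,\Alg_m(\cL)),\ \sim_\cL\,\subseteq\ker(h)\}$ and show it equals $\sim_\cL$ itself. The inclusion $\sim_\cL\,\subseteq\,\Cr(\Alg_m(\cL),P,\sim_\cL)$ is immediate since every $h$ in the intersection satisfies $\sim_\cL\,\subseteq\ker(h)$ by the side condition $S=\sim_\cL$. For the reverse inclusion, observe that each $\mng_\gM$ is itself a homomorphism in $\Hom(\F,\Alg_m(\cL))$ satisfying $\sim_\cL\,\subseteq\ker(\mng_\gM)$ (indeed $\sim_\cL\,=\,\bigcap_\gM\ker(\mng_\gM)$), so the intersection defining $\Cr$ is taken over a family that includes all the $\mng_\gM$; hence $\Cr(\Alg_m(\cL),P,\sim_\cL)\subseteq\bigcap_\gM\ker(\mng_\gM)=\,\sim_\cL$. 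This gives equality $\Cr(\Alg_m(\cL),P,\sim_\cL)=\,\sim_\cL$, and quotienting $\F$ by both sides yields $\F/\!\sim_\cL\,=\Fr(\Alg_m(\cL),P,\sim_\cL)$. The main thing to verify carefully here is that $\Alg_m(\cL)=\SP\,\Alg_m(\cL)$ is not needed for this direction, but that no homomorphism into $\Alg_m(\cL)$ other than the $\mng_\gM$ can shrink the intersection below $\sim_\cL$ — which is automatic once one sees that the $\mng_\gM$ are already among the admissible $h$.

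For item (iii), I would show directly that $\si(\sim_\cL)$ equals the equational theory $\{\langle\phi,\psi\rangle : \Alg_m(\cL)\models[\phi=\psi]\}$, reading the elements of $P$ as the algebraic variables. Unwinding \eqref{si}, a pair $\langle\phi,\psi\rangle$ lies in $\si(\sim_\cL)$ iff $\langle h(\phi),h(\psi)\rangle\in\,\sim_\cL$ for every substitution $h\colon\F\to\F$, i.e.\ iff $\mng_\gM(h(\phi))=\mng_\gM(h(\psi))$ for all $\gM$ and all $h$. The point is that the composites $\mng_\gM\circ h$, as $h$ ranges over all substitutions and $\gM$ over all models, are exactly the homomorphisms from $\F$ into $\Alg_m(\cL)$ that arise from arbitrary assignments of the variables $P$; so the universally quantified condition says precisely that the equation $[\phi=\psi]$ holds under every assignment into every meaning algebra, which is the semantic definition of $\Alg_m(\cL)\models[\phi=\psi]$. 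Here I expect the main obstacle: one must check that ranging over all $\mng_\gM\circ h$ really exhausts all assignments of $P$ into members of $\Alg_m(\cL)$ — equivalently, that $\{\mng_\gM(h(p)) : h,\gM\}$ can realize any tuple of elements of a meaning algebra — and to handle this cleanly one uses $\Fr(\K,X,S)=\Fr(\SP\K,X,S)$ together with the fact that $\F$ is absolutely free over $P$, so every assignment $P\to B$ with $B\in\SP\,\Alg_m(\cL)$ extends to a homomorphism. Once (iii) is established, item (ii) follows by the same bookkeeping as (i) with the empty condition $S=\emptyset$: $\Cr(\Alg_m(\cL),P)=\bigcap\{\ker(h):h\in\Hom(\F,\Alg_m(\cL))\}$, and by \eqref{eq} this intersection is exactly $\si(\sim_\cL)$, so $\F/\si(\sim_\cL)=\Fr(\Alg_m(\cL),P)$.

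The expected hard step is item (iii), specifically justifying that the homomorphisms $\mng_\gM\circ h$ sweep out all assignments into $\SP\,\Alg_m(\cL)$; everything else is a matter of matching the intersections in the definitions of $\Cr$ against the intersection $\sim_\cL=\bigcap_\gM\ker(\mng_\gM)$. I would therefore present (iii) first (or at least the assignment-surjectivity lemma underlying it), then deduce (ii) from (iii), and prove (i) independently by the direct two-inclusion argument above.
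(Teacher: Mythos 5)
Your overall architecture is essentially the paper's: item (i) by the direct two\--inclusion computation of $\Cr(\Alg_m(\cL),P,\sim_{\cL})$, and one factorization fact carrying both (ii) and (iii). (The paper proves (ii) first and derives (iii); you go the other way, which is equally fine, since both items reduce to the identity $\si(\sim_{\cL})=\bigcap\{\ker(e):e\in\Hom(\F,\Alg_m(\cL))\}$.) The genuine gap is in how you propose to discharge the step you yourself flag as the hard one --- that the composites $\mng_{\gM}\circ h$, with $h:\F\to\F$ a substitution, exhaust $\Hom(\F,\Alg_m(\cL))$. The tools you cite do not prove this. The identity $\Fr(\K,X,S)=\Fr(\SP\K,X,S)$ is beside the point, and absolute freeness of $\F$ over $P$ only says that every assignment $P\to B$ extends to a homomorphism $\F\to\gB$; that manufactures homomorphisms \emph{from} assignments, which is the opposite of what you need, namely that every such homomorphism \emph{factors through} some meaning function. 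The $\SP$-closure is moreover a red herring: the semantic definition of $\Alg_m(\cL)\models[\phi=\psi]$ quantifies only over evaluations into members of $\Alg_m(\cL)$, so nothing beyond $\Alg_m(\cL)$ is needed; and the exhaustion claim genuinely fails for $\SP\Alg_m(\cL)$, since the image of $\mng_{\gM}\circ h$ always lies inside the single meaning algebra $\mng_{\gM}(\F)$, so such composites cannot account for homomorphisms into nontrivial products.

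The missing ingredient is the surjectivity you state in your opening paragraph but never deploy at the crucial point: members of $\Alg_m(\cL)$ are by definition the \emph{images} $\mng_{\gM}(\F)$, so every element of a meaning algebra is the meaning of some formula. Given an evaluation $e:\F\to\gA$ with $\gA=\mng_{\gM}(\F)$, choose for each $p\in P$ a formula $\psi_p$ with $e(p)=\mng_{\gM}(\psi_p)$, and let $h:\F\to\F$ be the homomorphic extension of $p\mapsto\psi_p$ (this is where absolute freeness actually enters). Then $e$ and $\mng_{\gM}\circ h$ are homomorphisms agreeing on the generating set $P$, hence equal. This is precisely the commuting\--triangle argument in the paper's proof of (ii). Once you insert it, your order of deduction --- (i) directly, then (iii) from the exhaustion lemma, then (ii) from (iii) --- goes through, with no reference to $\SP$ anywhere.
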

\begin{proof} 
	(i) We show that $\sim_{\cL}$ equals $\Cr(\Alg_m(\cL), P(\cL), \sim_{\cL})$,
	that is,
	\begin{equation}\label{eq:simcr}
		\bigcap\{ \ker(\mng_{\gM}):\; \gM\in M(\cL) \}
		\ =\ 
		\bigcap\{ \ker(h):\; h\in\Hom(\F(\cL), \Alg_m(\cL), \sim_{\cL}) \}\,.
	\end{equation}
	For $h\in\Hom(\F(\cL), \Alg_m(\cL), \sim_{\cL})$, we have 
	$\sim_{\cL}\;\subseteq\ker(h)$, therefore $\sim_{\cL}$ is a subset
	of the intersection in the right-hand side of \eqref{eq:simcr}, i.e., 
	$\sim_{\cL}$ \ $\subseteq$ $\Cr(\Alg_m(\cL), P(\cL), \sim_{\cL})$.
	For the inclusion $\supseteq$ note that for each $\gM\in M(\cL)$
	we have $\sim_{\cL}\;\subseteq \ker(\mng_{\gM})$, hence
	\[
		\mng_{\gM}\ \in\  \Hom(\F(\cL), \Alg_m(\cL), \sim_{\cL}),
	\]
	and thus $\Cr(\Alg_m(\cL), P(\cL), \sim_{\cL})$\ $\subseteq$ $\sim_{\cL}$.\\
	
\noindent (ii) Here we need to prove $\si(\sim_{\cL})\ =\ \Cr(\Alg_m(\cL), P(\cL))$, 
	that is, 
	\begin{align}\begin{split}\label{eq:proofii}
		\{ \<\phi,\psi\>:\; \forall h\in\Hom(\F,\F)\; &(\forall \gM\in M)\; 
		\mng_{\gM}(h(\phi)) = \mng_{\gM}(h(\psi)) \}\\
		&=\ \  \bigcap\{\ker(k):\; k\in\Hom(\F,\Alg_m(\cL)) \}\,.
	\end{split}\end{align}
	Take any $k\in\Hom(\F,\Alg_m(\cL))$, that is, $k:\F\to\mng_{\gM}(\F)$ for some
	$\gM\in M$. To prove \eqref{eq:proofii} it is enough to show that
	there is a homomorphism $h:\F\to\F$ such that 
	$k = \mng_{\gM}\circ\; h$. To obtain such $h$, note that 
	for each $p\in P(\cL)$ there is $\psi_p\in F$ such that 
	$k(p)=\mng_{\gM}(\psi_p)$. Let
	$h$ be the homomorphic extension of the mapping $p\mapsto\psi_p$. 
	\begin{center}
		\begin{tikzcd}[column sep=large]
			\F \arrow{r}{k} \arrow[swap,dashed]{d}{h} & \mng_{\gM}(\F) \\
			 \F\arrow[swap]{ur}{\mng_{\gM}} & 
		\end{tikzcd} 
	\end{center}

	(iii) Let $\gA=\mng_{\gM}(\F)$ for some $\gM\in M$. By definition, 
	$\gA\models [\phi=\psi]$ holds iff for all evaluations (homomorphism)
	$e:\F\to\gA$ we have $\gA\models [\phi=\psi][e]$, that is, the equality holds in
	$\gA$ under the evaluation $e$. Part (ii) of this proof shows that 
	$e = \mng_{\gM}\circ\; h$ for some homomorphism $h:\F\to\F$. Conversely, 
	for each $h:\F\to\F$ the homomorphism $\mng_{\gM}\circ\; h$ gives
	rise to an evaluation over $\gA$. Therefore,
    \begin{align}
        \gA\models[\phi=\psi] \ \  \text{ iff }\ \ 
        (\forall h:\F\to\F)\ \<h(\phi), h(\psi)\>\in\ker(\mng_{\gM}). \label{eqeqeq}
    \end{align}
To complete the proof
	recall that 
	\begin{align}
		\<\phi,\psi\>\in\si(\sim_{\cL}) \ \ \text{ iff }\ \ 
		(\forall \gM\in M)(\forall h:\F\to\F)\ \<\phi,\psi\>\in\ker(\mng_{\gM}\circ\;h)\,.
	\end{align}
\end{proof}
\bigskip

Recall the notion of a deductive system from \cite{BP}, \cite{Font}, \cite{BH06}. A deductive system $\vd$ is called \emph{2-dimensional} when $H\vdash\Phi$ implies $H\cup\{\Phi\}\subseteq F\times F$. We will deal only with 2-dimensional deductive systems in this paper, so we will omit the adjective 2-dimensional. When $\vdash$ is a deductive system and $\Sigma\subseteq F\times F$, we say that $\Sigma$ is \emph{closed} under $\vdash$ when $\<\tau,\sigma\>\in\Sigma$ whenever $H\vdash\<\tau,\sigma\>$ and $H\subseteq\Sigma$. The \emph{$\vdash$-closure} of $\Sigma$ is defined as the smallest set containing $\Sigma$ and closed under $\vdash$. We call a deductive system \emph{sound} if $\ker(\mng_{\gM})$ is closed under $\vdash$, for all models $\gM\in M$.  A deductive system is called \emph{substitutional} when $H\vd\<\tau,\sigma\>$ implies $f(H)\vd\<f\tau,f\sigma\>$ for all $f:\F\to\F$. 
	%
 Define  $\vd_{\cL}$ as follows. For all $H\subseteq F\times F$ and $\tau,\sigma\in F$, 
 	\begin{align}\begin{split}	
 H\ \vd_{\cL}\ \<\tau,\sigma\>\ \ \Leftrightarrow\ \ (\forall f&:\F\to\F)(\forall \gM\in M)\\ &\big[f(H)\subseteq \ker(\mng_{\gM})\mbox{ $\Rightarrow$ }\< f\tau,f\sigma\>\in \ker(\mng_{\gM})\big], 
 \end{split}\label{vddef}\end{align}
	where $f(H)=\{\< f(\phi),f(\psi)\> : \<\phi,\psi\>\in H\}$.

The following theorem says that each logic $\cL$ contains a largest substitutional and sound, but not necessarily finitary, (2-dimensional) deductive system in it.	This deductive system is $\vd_{\cL}$ defined above and it corresponds to the infinitary quasi-equational theory of $\Alg_m(\cL)$ such that the atomic formulas are considered as algebraic variables.

\begin{theorem}\label{ded} Let $\cL$ be any logic in the sense of Definition \ref{def:logic}. Then (i)-(ii) below hold.
	\begin{enumerate}[(i)]\itemsep-2pt
		\item $\vd_{\cL}$ is the largest substitutional and sound deductive system in $\cL$.
		\item\label{deq}  $\vd_{\cL}$ corresponds to the infinitary quasi-equational theory of $\Alg_m(\cL)$, i.e., for all $H\subseteq F\times F$ and $\phi,\psi\in F$ we have
		\[ H\vd_{\cL}\<\phi,\psi\>\quad\mbox{ iff }\quad\Alg_m(\cL)\models\wedge H\to [\phi=\psi]. \]
	\end{enumerate}
\end{theorem}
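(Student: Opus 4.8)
For part (i), I would first verify that $\vd_{\cL}$ is a deductive system at all, i.e.\ that it satisfies reflexivity, monotonicity and cut. Each is immediate from \eqref{vddef} by fixing a single $f$ and $\gM$ and pushing them through the implication: reflexivity because $\<\tau,\sigma\>\in H$ forces $\<f\tau,f\sigma\>\in f(H)$; monotonicity because $H\subseteq H'$ gives $f(H)\subseteq f(H')$; and cut because, if $H\vd_{\cL} g$ for every $g$ in some $G$, then $f(H)\subseteq\ker(\mng_{\gM})$ forces $f(G)\subseteq\ker(\mng_{\gM})$, so the hypothesis $H\cup G\vd_{\cL}\<\tau,\sigma\>$ applies. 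Soundness is precisely the instance $f=\id$ of \eqref{vddef}. Substitutionality follows by composition: if $H\vd_{\cL}\<\tau,\sigma\>$ and $g:\F\to\F$, then for any $f,\gM$ with $f(g(H))=(f\circ g)(H)\subseteq\ker(\mng_{\gM})$ I apply \eqref{vddef} to the substitution $f\circ g$, obtaining $\<(f\circ g)\tau,(f\circ g)\sigma\>\in\ker(\mng_{\gM})$, that is $\<f(g\tau),f(g\sigma)\>\in\ker(\mng_{\gM})$; hence $g(H)\vd_{\cL}\<g\tau,g\sigma\>$.

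For maximality I would take an arbitrary substitutional and sound deductive system $\vd$ with $H\vd\<\tau,\sigma\>$, fix $f:\F\to\F$ and $\gM\in M$ with $f(H)\subseteq\ker(\mng_{\gM})$, and reason as follows: substitutionality gives $f(H)\vd\<f\tau,f\sigma\>$, while soundness says $\ker(\mng_{\gM})$ is closed under $\vd$, so from $f(H)\subseteq\ker(\mng_{\gM})$ I get $\<f\tau,f\sigma\>\in\ker(\mng_{\gM})$. Since $f$ and $\gM$ are arbitrary, $H\vd_{\cL}\<\tau,\sigma\>$, whence $\vd\ \subseteq\ \vd_{\cL}$. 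Together with the previous paragraph this gives that $\vd_{\cL}$ is the largest substitutional sound deductive system.

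For part (ii) the strategy is to unfold both sides into the same statement. Reading the premise $\wedge H$ conjunctively (possibly over infinitely many equations), $\Alg_m(\cL)\models\wedge H\to[\phi=\psi]$ asserts that for every member algebra and every evaluation of the variables $P$ into it, identifying all pairs of $H$ forces identification of $\<\phi,\psi\>$. Every member algebra is some $\mng_{\gM}(\F)$, and an evaluation is just a homomorphism $e:\F\to\mng_{\gM}(\F)$; by the correspondence established inside the proof of Theorem~\ref{fmalg}(ii), these $e$ are exactly the maps $\mng_{\gM}\circ h$ for $h:\F\to\F$. Substituting $e=\mng_{\gM}\circ h$, the premise ``$e$ identifies every pair of $H$'' becomes $h(H)\subseteq\ker(\mng_{\gM})$ and the conclusion becomes $\<h\phi,h\psi\>\in\ker(\mng_{\gM})$, so quantifying over all $\gM\in M$ and all $h:\F\to\F$ reproduces verbatim the defining clause \eqref{vddef} of $H\vd_{\cL}\<\phi,\psi\>$.

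Most of this is routine unwinding of definitions; the one genuinely load-bearing ingredient is the homomorphism-to-substitution correspondence from Theorem~\ref{fmalg}(ii), which I would cite rather than re-derive. The only points needing care are bookkeeping ones: that quantifying over the class $\Alg_m(\cL)$ is the same as quantifying over $\gM\in M$, since every member is of the form $\mng_{\gM}(\F)$; and that the (possibly infinitary) conjunction $\wedge H$ must be read as the family of all equations $[\tau=\sigma]$ with $\<\tau,\sigma\>\in H$, which matches $f(H)\subseteq\ker(\mng_{\gM})$ exactly and is what licenses the ``not necessarily finitary'' clause in the theorem.
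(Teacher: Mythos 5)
Your proposal is correct and follows essentially the same route as the paper: soundness via $f=\id$, substitutionality via composition of substitutions, maximality by the substitutionality-plus-soundness argument (you argue directly where the paper argues by contradiction, a trivial difference), and part (ii) by unfolding both sides using the fact that evaluations into a concept algebra $\mng_{\gM}(\F)$ are exactly the compositions $\mng_{\gM}\circ h$ with $h:\F\to\F$, which is the correspondence behind \eqref{eqeqeq} in the proof of Theorem \ref{fmalg}. Your extra verification that $\vd_{\cL}$ satisfies reflexivity, monotonicity and cut is a point of care the paper leaves implicit, not a deviation.
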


\begin{proof} 
    (i) $\vd_{\cL}$ is sound, if $\ker(\mng_{\gM})$ is closed under $\vd_{\cL}$ for all models $\gM$, that is, for $H\subseteq\ker(\mng_{\gM})$ if $H\vd_{\cL}\<\tau,\sigma\>$, then $\<\tau,\sigma\>\in\ker(\mng_{\gM})$. This follows from \eqref{vddef} with $f$ being the identity mapping.
    The composition $f\circ g$ of two homomorphisms $f,g:\F\to\F$
    is a homomorphism, which yields substitutionality of $\vd_{\cL}$.
    
    By way of contradiction, suppose that $\vd$ is a 
    substitutional and sound deductive system in $\cL$, and 
    $\vd_{\cL}\subsetneq \vd$. Then there are $H\subseteq F\times F$
    and $\tau,\sigma\in F$ such that $H\vd\<\tau,\sigma\>$, but
    $H\nvd_{\cL}\<\tau,\sigma\>$. The latter implies that
    there are $f:\F\to\F$ and $\gM\in M$ so that
    $f(H)\subseteq\ker(\mng_{\gM})$ but $\<f(\tau),f(\sigma)\>\notin\ker(\mng_{\gM})$. Substitutionality of
    $\vd$ ensures $f(H)\vd\<f(\tau),f(\sigma)\>$. By assumption, 
    $f(H)\subseteq\ker(\mng_{\gM})$. Soundness of $\vd$ implies
    $\<f(\tau),f(\sigma)\>\in\ker(\mng_{\gM})$,  yielding a contradiction. Therefore, $\vd_{\cL}$ is the largest among the
    substitutional and sound deductive systems in $\cL$.\\

    \noindent (ii) The proof of Theorem \ref{fmalg}(iii), in particular \eqref{eqeqeq} shows that
     	\begin{align}\begin{split}	
 \Alg_m(\cL)\models \wedge H\to &[\phi=\psi]\ \ \Leftrightarrow\ \ (\forall f:\F\to\F)(\forall \gM\in M)\\ &\big[f(H)\subseteq \ker(\mng_{\gM})\mbox{ $\Rightarrow$ }\< f\tau,f\sigma\>\in \ker(\mng_{\gM})\big].
 \end{split}\label{eqqe}\end{align}
 The right-hand side of \eqref{eqqe} is the definition of
 $H\vd_{\cL}\<\tau,\sigma\>$.
\end{proof}

\bigskip
We finish this subsection with defining the notions of isomorphism between logics and reduct of a logic. In the next two definitions, let $\cL^P=\langle F^P,M^P,\mng^P,\models^P\rangle$ and $\cL^Q=\langle F^Q,M^Q,\mng^Q,\models^Q\rangle$ be logics with atomic formulas $P,Q$ and formula algebras $\F^P, \F^Q$ respectively.. 
\begin{definition}[isomorphism between logics]\label{def:isom}
	An \emph{isomorphism}	between $\cL^P$ and $\cL^Q$ is a pair $f^F, f^M$ of bijections  such that
	\begin{enumerate}[(a)]\itemsep-2pt
		\item $f^F$ is an isomorphism from $\F^P$ onto $\F^Q$, 
		\item $f^M$ is a bijection between $M^P$ and $M^Q$, and
		\item for all $\phi\in F^P$ and $\gM\in M^P$
		\begin{align}
			\mng^P_{\gM}(\phi)&=\mng^Q_{f^M(\gM)}\bigl(f^F(\phi)\bigr),\\
			\gM\models^P\phi\quad &\Longleftrightarrow\quad f^M(\gM)\models^Q
			f^F(\phi).
		\end{align}
	\end{enumerate}
	We say that a bijection $f:P\to Q$ \emph{induces an isomorphism} 
	between $\cL^P$ and $\cL^Q$ if there is an isomorphism $f^F, f^M$ between $\cL^P$ and $\cL^Q$ such that $f^F$ extends $f$.
	We say that $\cL^P$ is an {\it isomorphic copy} of $\cL^Q$ if there
	is an isomorphism between them.	\endef
\end{definition}
Note that isomorphic logics have the same class of meaning algebras, this is immediate from the definition of an isomorphism.

\begin{definition}[reduct and conservative extension of a logic]\label{def:reduct}
	We say that $\cL^P$ is a \emph{reduct} of $\cL^Q$ if there is a function $f^M$  such that
	\begin{enumerate}[(a)]\itemsep-2pt
		\item $P\subseteq Q$ and $\F^P$ is a subalgebra of $\F^Q$, 
		\item $f^M$ maps $M^Q$ onto $M^P$, and
		\item for all $\phi\in F^P$ and $\gM\in M^Q$
		\begin{align}
			\mng^Q_{\gM}(\phi)&=\mng^P_{f^M(\gM)}(\phi),\\
			\gM\models^Q\phi\quad &\Longleftrightarrow\quad f^M(\gM)\models^P(\phi).
		\end{align}
	\end{enumerate}
		We say that $\cL^Q$ is a \emph{conservative extension} of $\cL^P$ when  $\cL^P$ is a reduct of $\cL^Q$.
	We will often omit the adjective ``conservative''. \endef
	\end{definition}

The following proposition says that the tautological congruence of a reduct is the natural restriction of that of the extension, and thus the tautological formula algebra of a reduct is naturally embeddable in the tautological formula algebra of the extension. 

\begin{proposition}\label{prop:reduct} Assume that $\cL^P$ is a reduct of $\cL^Q$. Then (i)-(ii) below hold.
	\begin{enumerate}[(i)]\itemsep-2pt
		\item $\sim^P =\ \sim^Q\cap\ (F^P\times F^P)$
		\item $f:\F^P\slash\!\sim^P\to\F^Q\slash\!\sim^Q$ is an embedding, where $f(\phi\slash\!\sim^P)=\phi\slash\!\sim^Q$ for all $\phi\in F^P$.
	\end{enumerate}	
\end{proposition}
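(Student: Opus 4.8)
The plan is to prove (i) directly from the definition of the tautological congruence by a double inclusion, and then to obtain (ii) as a formal consequence of (i) together with the fact that $\F^P$ is a subalgebra of $\F^Q$. Note that only the meaning-function clause of Definition \ref{def:reduct}(c) will be used; the validity relation plays no role, since $\sim_{\cL}$ is defined purely through meanings.

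For the inclusion $\sim^P\ \subseteq\ \sim^Q\cap(F^P\times F^P)$, I would take $\phi,\psi\in F^P$ with $\phi\sim^P\psi$, fix an arbitrary $\gN\in M^Q$, and apply the commutation $\mng^Q_{\gN}(\phi)=\mng^P_{f^M(\gN)}(\phi)$ from Definition \ref{def:reduct}(c). Since $f^M(\gN)\in M^P$, the hypothesis $\phi\sim^P\psi$ gives $\mng^P_{f^M(\gN)}(\phi)=\mng^P_{f^M(\gN)}(\psi)$, and transporting back along (c) yields $\mng^Q_{\gN}(\phi)=\mng^Q_{\gN}(\psi)$. As $\gN$ was arbitrary, $\phi\sim^Q\psi$. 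This direction uses only the commutation of the meaning functions.

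For the reverse inclusion $\sim^Q\cap(F^P\times F^P)\ \subseteq\ \sim^P$, the surjectivity of $f^M$ is the essential ingredient, and I expect this to be the only genuinely load-bearing step in the whole proposition. Given $\phi,\psi\in F^P$ with $\phi\sim^Q\psi$, I would take any $\gM\in M^P$ and, using that $f^M$ maps $M^Q$ \emph{onto} $M^P$, choose $\gN\in M^Q$ with $f^M(\gN)=\gM$; then (c) gives $\mng^P_{\gM}(\phi)=\mng^Q_{\gN}(\phi)=\mng^Q_{\gN}(\psi)=\mng^P_{\gM}(\psi)$, so $\phi\sim^P\psi$. This is exactly what upgrades the trivial containment into the stated equality: without surjectivity one would obtain only one inclusion.

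Finally, (ii) follows formally. The map $f$ is induced by the inclusion $\F^P\hookrightarrow\F^Q$ (legitimate because $\F^P$ is a subalgebra of $\F^Q$, in particular they share the similarity type $\Cn$) composed with the quotient map $\F^Q\to\F^Q\slash\!\sim^Q$; because (i) gives $\sim^P\ \subseteq\ \sim^Q$, this composite factors through $\F^P\slash\!\sim^P$, and the factorization is precisely $\phi\slash\!\sim^P\mapsto\phi\slash\!\sim^Q$, hence a well-defined homomorphism. Injectivity is the contrapositive content of the inclusion proved in the third paragraph: if $\phi\slash\!\sim^Q=\psi\slash\!\sim^Q$ for $\phi,\psi\in F^P$, then $\phi\sim^Q\psi$, whence $\phi\sim^P\psi$ by (i), i.e.\ $\phi\slash\!\sim^P=\psi\slash\!\sim^P$. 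Therefore $f$ is an embedding, and the only real work in the argument is the surjectivity step in (i).
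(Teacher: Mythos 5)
Your proof is correct and takes essentially the same approach as the paper's: both inclusions in (i) come from the reduct clause on meaning functions — the easy one from the mere fact that $f^M$ maps $M^Q$ into $M^P$, the reverse one from surjectivity of $f^M$ — and (ii) is obtained as a formal consequence of (i), which the paper states without detail and you spell out. The only cosmetic difference is that the paper argues both inclusions contrapositively (via models witnessing $\mng_{\gM}(\phi)\ne\mng_{\gM}(\psi)$), while you argue them directly.
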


\begin{proof} For proving (i), let $\phi,\psi\in F^P$. Assume that $(\phi,\psi)\notin\ \sim^P$. Then there is $\gM\in M^P$ such that $\mng_\gM(\phi)\ne\mng_\gM(\psi)$. There is $\gN\in M^Q$ such that $\mng_\gM = \mng_\gN\upharpoonright F^P$ since $\cL^P$ is a reduct of $\cL^Q$. Thus $\mng_\gN(\phi)\ne\mng_\gN(\psi)$ and therefore $(\phi,\psi)\notin\ \sim^Q$. The other direction is completely analogous: assume that $(\phi,\psi)\notin\ \sim^Q$.
	Then there is $\gN\in M^Q$ such that $\mng_\gN(\phi)\ne\mng_\gN(\psi)$. Let $\gM\in M^P$ be such that $\mng_\gM = \mng_\gN\upharpoonright F^P$, there is such $\gM$ since $\cL^P$ is a reduct of $\cL^Q$. Thus $\mng_\gM(\phi)\ne\mng_\gM(\psi)$ and therefore $(\phi,\psi)\notin\ \sim^P$. This proves (i). Now, (ii) is a direct consequence of (i).
\end{proof}

\section{Propositional- and predicate-like logics}\label{sec:subs}
In this section, we define two characteristic subclasses of logics. In the first class, atomic formulas can be replaced (substituted) with any other formulas without affecting meaning, in the second class, this can be done only with formulas satisfying some conditions. Propositional logics usually fall into the first class, while classical first-order logic is a typical example of the second class.

The substitution property formalizes a 
``total (or unconditional) freedom'' of the propositional variables and is as
follows.%

\begin{definition}[substitution property]\label{def:substit}
	We say that	$\cL$ has the \emph{substitution property}%
	\footnote{This is called semantical substitution property in \cite[3.3.10]{AGyNS2022}. The syntactic substitution property of \cite[3.3.9]{AGyNS2022} corresponds to the tautological congruence being substitutional, see Proposition \ref{prop:syntsubs}.}
	or that $\cL$ is \emph{substitutional} if
	for any $s:P\to F$ and any model $\gM\in M$ there is a model $\gN\in M$
	such that for all $p\in P$ we have
	\begin{equation}
		\mng_{\gN}(p) = \mng_{\gM}(s(p)).
	\end{equation}
	The model $\gN$ is called the \emph{substituted version} of $\gM$
	along substitution $s$. \endef
\end{definition}

The following is proved in \cite[3.3.11]{AGyNS2022}.

\begin{claim}[equivalent forms of being substitutional]\label{subsequi}
	The following are equivalent.
	\begin{enumerate}[(i)]\itemsep-2pt
		\item $\cL$ is substitutional.
	\item $Mng_{\cL}$ is the class of all homomorphisms from the formula algebra $\F$ to members of $\Alg_m(\cL)$, i.e.,
	\begin{equation}
		Mng_{\cL} = \Hom(\F,\Alg_m(\cL)).
	\end{equation}
		\item There is a class $\K$ of algebras such that
		$\Mng_{\cL}$ is the class of all homomorphisms from the formula algebra to members of $\K$, i.e.,
		\begin{equation}
			Mng_{\cL} = \Hom(\F,\K)\quad\mbox{ for some }\K.
		\end{equation}
	\end{enumerate}
\end{claim}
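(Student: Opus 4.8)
The plan is to establish the equivalence by the cycle (i) $\Rightarrow$ (ii) $\Rightarrow$ (iii) $\Rightarrow$ (i); two of the three links are routine and only the passage from substitutionality to a concrete description of $\Mng(\cL)$ carries real content. The implication (ii) $\Rightarrow$ (iii) needs no work: one simply takes $\K = \Alg_m(\cL)$. For (iii) $\Rightarrow$ (i), assume $\Mng(\cL) = \Hom(\F,\K)$, and fix a substitution $s\colon P\to F$ together with a model $\gM$; let $\bar s\colon\F\to\F$ be its homomorphic extension. Since $\mng_{\gM}\in\Hom(\F,\K)$, say $\mng_{\gM}\colon\F\to\gB$ with $\gB\in\K$, the composite $\mng_{\gM}\circ\bar s$ is again a homomorphism into $\gB$, hence lies in $\Hom(\F,\K)=\Mng(\cL)$. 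Therefore some model $\gN$ satisfies $\mng_{\gN}=\mng_{\gM}\circ\bar s$ as functions, and in particular $\mng_{\gN}(p)=\mng_{\gM}(s(p))$ for every $p\in P$, which is exactly the substitution property. The only fact used here is that a composite of homomorphisms is a homomorphism.

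The substantive link is (i) $\Rightarrow$ (ii). The inclusion $\Mng(\cL)\subseteq\Hom(\F,\Alg_m(\cL))$ holds in every logic: by compositionality each $\mng_{\gM}$ is a homomorphism of $\F$ onto $\mng_{\gM}(\F)\in\Alg_m(\cL)$. For the reverse inclusion I would take an arbitrary $h\in\Hom(\F,\Alg_m(\cL))$, so $h\colon\F\to\mng_{\gM}(\F)$ for some $\gM\in M$, and first factor it through a substitution exactly as in the proof of Theorem \ref{fmalg}(ii): for each atomic formula $p$ pick $\psi_p\in F$ with $h(p)=\mng_{\gM}(\psi_p)$, set $s(p)=\psi_p$, and note that $h$ and $\mng_{\gM}\circ\bar s$ are homomorphisms into the single algebra $\mng_{\gM}(\F)$ agreeing on the free generators $P$, so that $h=\mng_{\gM}\circ\bar s$. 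Applying the substitution property to $s$ and $\gM$ then yields a model $\gN$ with $\mng_{\gN}(p)=\mng_{\gM}(s(p))=h(p)$ for all $p\in P$.

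I expect the main obstacle to be the final step: promoting the agreement between $\mng_{\gN}$ and $h$ from the atomic formulas to all of $F$, which is what actually places $h$ in $\Mng(\cL)$. The two maps $\mng_{\gN}$ and $h$ are homomorphisms agreeing on the generators $P$, but they need not share a common target algebra, so the mere freeness of $\F$ over $P$ does not force them to coincide. I would close the gap by an induction on the complexity of formulas, pushing the atomic agreement through each connective; the inductive step for $c(\phi_1,\dots,\phi_n)$ goes through precisely when the concept algebras $\mng_{\gN}(\F)$ and $\mng_{\gM}(\F)$ interpret $c$ in the same way on the common values $\mng_{\gN}(\phi_i)=h(\phi_i)$. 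Verifying this coherence of the meaning functions --- rather than treating them as arbitrary homomorphisms --- is the delicate point on which the whole equivalence turns, and it is automatic in the logics we care about (for instance in $\CPL$, where every concept algebra is a subalgebra of the fixed two-element algebra). Once $\mng_{\gN}=h$ is secured, the cycle is complete.
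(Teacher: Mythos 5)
Two of your three implications are sound. The implication (ii)$\Rightarrow$(iii) is indeed trivial, and your argument for (iii)$\Rightarrow$(i) --- composing $\mng_{\gM}$ with $\bar s$ and using that $\Hom(\F,\K)$ is closed under precomposition with endomorphisms of $\F$ --- is exactly right. Within (i)$\Rightarrow$(ii), the factorization $h=\mng_{\gM}\circ\bar s$ is also correct, since $h$ and $\mng_{\gM}\circ\bar s$ are homomorphisms into the \emph{single} algebra $\mng_{\gM}(\F)$ agreeing on the free generators $P$.

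However, the step you flag and leave open is a genuine gap, and the induction you propose cannot close it: the ``coherence'' it needs is not merely delicate, it is false for arbitrary logics in the sense of Definition \ref{def:logic}, so an appeal to what happens ``in the logics we care about'' cannot finish a claim that quantifies over all logics. Concretely, take $P=\{p\}$, one unary connective $u$, and four models whose meaning functions are: $p\mapsto 0$ into $\gC$, where $\gC$ has universe $\{0,1,2\}$ with $u(0)=1$, $u(1)=2$, $u(2)=2$; $p\mapsto 1$ into $\gD$, where $\gD$ has universe $\{1,3\}$ with $u(1)=3$, $u(3)=3$; $p\mapsto 2$ into the one-element algebra on $\{2\}$; and $p\mapsto 3$ into the one-element algebra on $\{3\}$ (let every formula be valid in every model). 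Each of $0,1,2,3$ occurs as the meaning of $p$ in some model, so this logic satisfies Definition \ref{def:substit} as literally stated (agreement on atomic formulas only); yet the homomorphism $h\colon\F\to\gC$ with $h(p)=1$, hence $h(u^np)=2$ for all $n\geq 1$, belongs to $\Hom(\F,\Alg_m(\cL))$ and is not the meaning function of any of the four models, so (ii) fails. In other words, with the atomic-agreement reading of substitutionality the claim itself breaks down, because the concept algebras of $\gN$ and $\gM$ may interpret a connective differently on a shared element --- exactly the failure of your inductive step. The equivalence is proved in \cite[3.3.11]{AGyNS2022} under the stronger (and intended) reading of the substitution property, in which the substituted version $\gN$ satisfies $\mng_{\gN}=\mng_{\gM}\circ\bar s$ on all of $F$, not just on $P$; note that this full-function form is also what \ref{subsequi}(ii) is used for elsewhere, e.g.\ in the proof of Proposition \ref{prop:syntsubs}. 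Under that reading your factorization already completes (i)$\Rightarrow$(ii) in one line, and no induction on formula complexity is needed at all.
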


The substitution property implies that the tautological congruence relation is substitutional, i.e., it is its own substitutional part.

\begin{proposition}[syntactic substitution]\label{prop:syntsubs}
	If $\cL$ has the substitution property, then $\si(\sim_{\cL})=\ \sim_{\cL}$.
\end{proposition}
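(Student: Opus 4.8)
The plan is to prove the two inclusions separately. The inclusion $\si(\sim_{\cL}) \subseteq\ \sim_{\cL}$ holds in every logic and needs no hypothesis at all: the identity map $\id_{\F}$ is a homomorphism $\F\to\F$, so instantiating $h=\id_{\F}$ in the defining formula \eqref{si} of $\si(\sim_{\cL})$ immediately gives $\langle\phi,\psi\rangle\in\ \sim_{\cL}$ whenever $\langle\phi,\psi\rangle\in\si(\sim_{\cL})$.

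The substance is the reverse inclusion $\sim_{\cL}\ \subseteq\si(\sim_{\cL})$, which is where the substitution property enters. I would begin from a pair $\phi\sim_{\cL}\psi$ together with an arbitrary substitution (homomorphism) $h:\F\to\F$, and aim to show $\langle h(\phi),h(\psi)\rangle\in\ \sim_{\cL}$. Unpacking the definition of the tautological congruence, this amounts to proving $\mng_{\gM}(h(\phi))=\mng_{\gM}(h(\psi))$ for every model $\gM\in M$, so I fix such a $\gM$.

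The key move is to recognise the composite $\mng_{\gM}\circ h$ as the meaning function of some model. Since $h:\F\to\F$ and $\mng_{\gM}:\F\to\mng_{\gM}(\F)$ are both homomorphisms, $\mng_{\gM}\circ h$ is a homomorphism from $\F$ into $\mng_{\gM}(\F)$, and $\mng_{\gM}(\F)\in\Alg_m(\cL)$; hence $\mng_{\gM}\circ h\in\Hom(\F,\Alg_m(\cL))$. By Claim \ref{subsequi}(ii), the substitution property gives $\Mng(\cL)=\Hom(\F,\Alg_m(\cL))$, so $\mng_{\gM}\circ h\in\Mng(\cL)$; that is, there is a model $\gN\in M$ with $\mng_{\gN}=\mng_{\gM}\circ h$. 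Now applying $\phi\sim_{\cL}\psi$ to this particular $\gN$ yields $\mng_{\gN}(\phi)=\mng_{\gN}(\psi)$, i.e.\ $\mng_{\gM}(h(\phi))=\mng_{\gM}(h(\psi))$, exactly as required. Since $\gM$ and $h$ were arbitrary, this shows $\langle\phi,\psi\rangle\in\si(\sim_{\cL})$ and completes the proof.

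I expect the one delicate point to be the step producing the model $\gN$ with $\mng_{\gN}=\mng_{\gM}\circ h$. One could instead argue directly from Definition \ref{def:substit}, applying it to the substitution $s=h\upharpoonright P$ to obtain a model $\gN$ with $\mng_{\gN}(p)=\mng_{\gM}(h(p))$ on the atoms; but then one must still upgrade agreement on the generators $P$ to the full equality $\mng_{\gN}=\mng_{\gM}\circ h$ on all of $F$. That upgrade is not quite immediate, because $\mng_{\gN}$ and $\mng_{\gM}\circ h$ are a priori homomorphisms into possibly different algebras of type $\Cn$, so the freeness of $\F$ over $P$ does not literally apply to conclude they coincide. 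Routing the argument through Claim \ref{subsequi}(ii) sidesteps this issue entirely, since all that is needed is the membership of $\mng_{\gM}\circ h$ in $\Hom(\F,\Alg_m(\cL))$.
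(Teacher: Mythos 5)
Your proof is correct and follows essentially the same route as the paper's: the paper likewise fixes a model $\gM$, invokes Claim \ref{subsequi}(ii) to recognise the composite of $h$ with $\mng_{\gM}$ as the meaning function $\mng_{\gN}$ of some model $\gN$, and concludes $\< h(\phi),h(\psi)\>\in\ker(\mng_{\gM})$ for the arbitrary $\gM$. The paper leaves the easy inclusion $\si(\sim_{\cL})\subseteq\;\sim_{\cL}$ implicit, and your closing observation about why one should route through Claim \ref{subsequi}(ii) rather than apply Definition \ref{def:substit} directly (agreement on the atoms alone does not immediately upgrade to equality of the two homomorphisms, whose codomains may differ) is a sound justification of exactly the step the paper takes.
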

\begin{proof} Assuming that $\<\phi,\psi\>\in\;\sim_{\cL}$ and $h:\F\to\F$ is a substitution, we have to show that $\< h(\phi),h(\psi)\>\in\;\sim_{\cL}$.	Let $\gM\in M$ be a model, then $h\circ\mng_{\gM}=\mng_{\gN}$ for some model $\gN$, by \ref{subsequi}(ii). Then $\<\phi,\psi\>\in\ker(\mng_{\gN})$ by $\<\phi,\psi\>\in\;\sim_{\cL}$, so  $\< h(\phi),h(\psi)\>\in\ker(\mng_{\gM})$. Since $\gM\in M$ was chosen arbitrarily, this means that $\<\phi,\psi\>\in\;\sim_{\cL}$.
\end{proof}

\begin{example}\label{ex:pcsubst}
	Classical propositional logic 
	$\CPL$ is substitutional (cf. \cite[3.3.13]{AGyNS2022}).
\end{example}
\begin{proof}
	For any set $P$ of propositional variables and 
	any homomorphism $h:\F^P\to\mathbf{2}$, the mapping
	$\gM = h\upharpoonright P$ is a model in $\cL_C^P$. 
	As $P$ generates $\F^P$, the extension $\mng_{\gM}$ of $\gM$
	to $\F^P$ coincides with $h$. Thus, any homomorphism $h:\F^P\to \mathbf{2}$
	is a meaning function of some model. Applying Claim \ref{subsequi}(ii) finishes the proof.	
\end{proof}

\begin{example}\label{claim:FOLnosub}
	First-order logic $\FOL_t(V)$ is not substitutional, whenever $t$ is nonempty and $|V|>1$.
\end{example}
\begin{proof}
	An easy counterexample is the following. Let $r$ be any relation symbol,
	and $x,y\in V$ be distinct variables. In the example below we assume that $r$ is
	 unary, the case $t(r)>1$ is completely analogous. We have
	\begin{eqnarray}
		&\models& r(x)\;\longleftrightarrow\;\exists y(x=y\;\land r(y)) \\
		&\not\models& r(y)\;\longleftrightarrow\;\exists y(x=y\;\land r(y)).
	\end{eqnarray}
	Therefore the substitution that maps $r(x)$ to $r(y)$ and $r(y)$ to $r(y)$
	does not preserve the tautological congruence. Applying Proposition \ref{prop:syntsubs} finishes the proof.
\end{proof}

\begin{remark}\label{remark:FOLuresegyelemu}
	Let us discuss the cases $|V|\leq 1$ concerning \ref{claim:FOLnosub}.
	\begin{itemize}
		\item If $V=\emptyset$, then $P_t(\emptyset)$ is empty for any similarity type $t$. 
	 The set of formulas $F_t(V)$ is generated by $\bot$ and the $\land$, $\lnot$. 
		This is because we did not allow nullary relation symbols.
		The logic $\FOL_t(\emptyset)$ is therefore classical
		propositional logic without propositional letters, and is substitutional by
		\ref{ex:pcsubst}. 
		%
		We note that if nullary relations were allowed, then
		$\FOL_t(\emptyset)$ would be the same as propositional logic $\CPL$.
		\item If $|V|=1$, then $\FOL_t(V)$ is basically the same
		as modal logic $S5$, which is substitutional (see \cite[Example 3.2.5]{AGyNS2022}).
	\end{itemize}
\end{remark}

Though first-order logic is not substitutional, it has an analogous and just as useful substitution property. 
Namely, an atomic formula $r(v_1,\dots,v_n)$ represents any formula with free variables $v_1,\dots,v_n$ in the sense that 
we can substitute $r(v_1,\dots,v_n)$ with any formula with free variables $v_1,\dots,v_n$ in a model, and we get another model. 
In first-order logic as formalized in Example \ref{def:FOL}, 
there is another feature coded in the set of atomic formulas. Though an atomic formula $r(v_1,\dots,v_n)$ 
can take the meaning of that of any formula $\phi(v_1,\dots,v_n)$ with free variables $v_1,\dots,v_n$, there are connections between the meanings of $r(v_1,...,v_n)$ and the same relation symbol followed by a different sequence of variables, say, $r(v_2,v_1,\dots,v_n)$. Namely the meaning of $r(v_2,v_1,...,v_n)$ is determined as soon as we know the meaning of $r(v_ 1,v_2,\dots,v_n)$. 
Luckily, in $\FOL_t(V)$ with $V$ infinite, all these connections can be expressed in the language, and so they are present in the tautological congruence. 
The following weaker substitution property, conditional substitutional property, is intended to capture this feature of a logic.

\begin{definition}[conditional substitution property]\label{def:condsubstit}
	We say that the logic $\cL$ has the 
	\emph{conditional substitution property},
	or that $\cL$ is \emph{conditionally substitutional}, if
	for any homomorphism $s:\F\to\F$ and any model $\gM\in M$,
	\begin{equation} \label{ceq}
		\sim_{\cL}\;\subseteq\;\ker(\mng_{\gM}\circ\;s)\quad\Longrightarrow\quad
		(\exists \gN\in M)(\forall p\in P)\ \mng_{\gN}(p) = \mng_{\gM}(s(p)).
	\end{equation}
	The model $\gN$ is called the \emph{substituted version} of $\gM$
	along the substitution $s$.
\endef
\end{definition}

The following is proved in \cite[3.3.15]{AGyNS2022},

\begin{claim}[equivalent forms of being conditionally substitutional]\label{claim:subschar}
	The following are equivalent.
	\begin{enumerate}[(i)]\itemsep-2pt
		\item $\cL$ is conditionally substitutional.
		\item $\Mng_{\cL} = \Hom(\F,\Alg_m(\cL),\sim_{\cL}).$
		\item $\Mng_{\cL} = \Hom(\F,\K,S)$\quad\mbox{ for some }$\K\mbox{ and }S$.
	\end{enumerate}
\end{claim}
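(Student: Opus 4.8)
The plan is to establish the cycle (ii) $\Rightarrow$ (iii) $\Rightarrow$ (i) $\Rightarrow$ (ii), mirroring the structure of Claim \ref{subsequi} for the unconditional case. The implication (ii) $\Rightarrow$ (iii) is immediate: one simply takes $\K = \Alg_m(\cL)$ and $S =\;\sim_{\cL}$ as the witnesses for the existential quantifier in (iii).

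For (iii) $\Rightarrow$ (i), assume $\Mng_{\cL} = \Hom(\F,\K,S)$. First I would record the preliminary observation that $S\subseteq\;\sim_{\cL}$: since every $\mng_{\gM}\in\Mng_{\cL}=\Hom(\F,\K,S)$ satisfies $S\subseteq\ker(\mng_{\gM})$, the inclusion follows by intersecting $\ker(\mng_{\gM})$ over all models $\gM$. Now, to verify the conditional substitution property, take any homomorphism $s:\F\to\F$ and $\gM\in M$ with $\sim_{\cL}\;\subseteq\ker(\mng_{\gM}\circ s)$. The composite $h:=\mng_{\gM}\circ s$ is a homomorphism from $\F$ into the same member of $\K$ into which $\mng_{\gM}$ maps, so $h\in\Hom(\F,\K)$; moreover $S\subseteq\;\sim_{\cL}\;\subseteq\ker(h)$, so in fact $h\in\Hom(\F,\K,S)=\Mng_{\cL}$. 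Hence $h=\mng_{\gN}$ for some $\gN\in M$, and then $\mng_{\gN}(p)=h(p)=\mng_{\gM}(s(p))$ for all $p\in P$, which is exactly the conclusion demanded by (i).

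The main direction is (i) $\Rightarrow$ (ii), where I would prove the two inclusions separately. The inclusion $\Mng_{\cL}\subseteq\Hom(\F,\Alg_m(\cL),\sim_{\cL})$ holds in every logic: each $\mng_{\gM}$ is a homomorphism onto $\mng_{\gM}(\F)\in\Alg_m(\cL)$ and satisfies $\sim_{\cL}\;\subseteq\ker(\mng_{\gM})$ by the very definition of $\sim_{\cL}$. For the reverse inclusion, let $h\in\Hom(\F,\Alg_m(\cL),\sim_{\cL})$, say $h:\F\to\mng_{\gM}(\F)$ for some $\gM\in M$ with $\sim_{\cL}\;\subseteq\ker(h)$. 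The key step is to factor $h$ through $\mng_{\gM}$: since $\mng_{\gM}$ maps $\F$ onto its concept algebra $\mng_{\gM}(\F)$, for each atomic $p\in P$ one may pick $\psi_p\in F$ with $h(p)=\mng_{\gM}(\psi_p)$, and let $s:\F\to\F$ be the homomorphic extension of $p\mapsto\psi_p$ — this uses that $\F$ is absolutely free over $P$. Then $\mng_{\gM}\circ s$ and $h$ agree on the generators $P$, hence everywhere, so $h=\mng_{\gM}\circ s$. Now $\sim_{\cL}\;\subseteq\ker(h)=\ker(\mng_{\gM}\circ s)$ is precisely the hypothesis of the conditional substitution property, which therefore yields a model $\gN\in M$ with $\mng_{\gN}(p)=\mng_{\gM}(s(p))=h(p)$ for all $p\in P$; as both $\mng_{\gN}$ and $h$ are homomorphisms agreeing on $P$, we conclude $\mng_{\gN}=h$, so $h\in\Mng_{\cL}$.

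The one place requiring care — the ``hard part'' — is the factorization $h=\mng_{\gM}\circ s$ in (i) $\Rightarrow$ (ii); this is the same lifting argument used in the proof of Theorem \ref{fmalg}(ii), relying on the surjectivity of $\mng_{\gM}$ onto its concept algebra together with absolute freeness of $\F$. Once this factorization is in hand, the conditional substitution property supplies the model $\gN$, and the remainder is routine bookkeeping with kernels and the fact that homomorphisms out of the freely generated $\F$ are determined by their values on $P$.
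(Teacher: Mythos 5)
Your proof is correct. The paper does not actually prove Claim \ref{claim:subschar} itself (it is quoted from \cite[3.3.15]{AGyNS2022}), but your argument is precisely the expected one and is consistent with the paper's own techniques: the cycle (ii)$\Rightarrow$(iii)$\Rightarrow$(i)$\Rightarrow$(ii), with the trivial witnesses $\K=\Alg_m(\cL)$, $S=\;\sim_{\cL}$ for (iii), the kernel bookkeeping (including the needed observation $S\subseteq\;\sim_{\cL}$) for (iii)$\Rightarrow$(i), and, for (i)$\Rightarrow$(ii), exactly the factorization $h=\mng_{\gM}\circ s$ obtained by lifting generators through the surjection $\mng_{\gM}:\F\to\mng_{\gM}(\F)$ that the paper uses in the proof of Theorem \ref{fmalg}(ii).
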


\begin{theorem} \label{thm:FOLconsubs}
	For an infinite set $V$ of variables, first-order logic 
	$\FOL_t(V)$ is conditionally substitutional.
\end{theorem}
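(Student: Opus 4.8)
The plan is to verify the defining implication \eqref{ceq} directly: given a homomorphism $s:\F\to\F$ and a model $\gM$ with $\sim_{\cL}\subseteq\ker(\mng_\gM\circ s)$, I will build a model $\gN$ over the same universe $M$ as $\gM$ such that $\mng_\gN(p)=\mng_\gM(s(p))$ for every atomic $p=r(v_1,\dots,v_n)$. Since a first-order model is nothing but a choice of interpretation $r^\gN\subseteq M^n$ for each relation symbol, the whole task reduces to defining these relations so that the induced cylinder $\{h\in{}^VM:\langle h(v_1),\dots,h(v_n)\rangle\in r^\gN\}$ equals $\mng_\gM(s(r(v_1,\dots,v_n)))$ for all $v_1,\dots,v_n$.

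The first step is a \emph{cylinder property}: for every $r$ and every $v_1,\dots,v_n$, the set $\mng_\gM(s(r(v_1,\dots,v_n)))$ depends only on the coordinates $v_1,\dots,v_n$. Here I use that for any $w\notin\{v_1,\dots,v_n\}$ the equivalence $r(v_1,\dots,v_n)\sim_{\cL}\exists w\,r(v_1,\dots,v_n)$ is a tautology; since $s$ commutes with the connectives and fixes the equality constants, the hypothesis $\sim_{\cL}\subseteq\ker(\mng_\gM\circ s)$ gives $\mng_\gM(s(r(v_1,\dots,v_n)))=\mng_\gM(\exists w\,s(r(v_1,\dots,v_n)))$, and the right-hand side is visibly invariant under changing the $w$-coordinate. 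As $s(r(v_1,\dots,v_n))$ has only finitely many free variables, running this over all such $w$ shows its meaning is determined by the values at $v_1,\dots,v_n$ alone.

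With this in hand I fix, for each arity $n$, distinct variables $x_1,\dots,x_n$ and define $r^\gN$ by declaring $\langle a_1,\dots,a_n\rangle\in r^\gN$ iff $h\in\mng_\gM(s(r(x_1,\dots,x_n)))$ for some (equivalently, by the cylinder property, any) $h$ with $h(x_i)=a_i$. The verification that $\gN=\langle M,r^\gN\rangle_r$ works rests on the standard substitution tautology: when $x_1,\dots,x_n$ are distinct and disjoint from $\{v_1,\dots,v_n\}$, $r(v_1,\dots,v_n)\sim_{\cL}\exists x_1\cdots\exists x_n\big(\bigwedge_{i}x_i=v_i\land r(x_1,\dots,x_n)\big)$. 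Applying $s$ (which again fixes everything except the displayed atomic formula) and the hypothesis yields $\mng_\gM(s(r(v_1,\dots,v_n)))=\mng_\gM\big(\exists x_1\cdots\exists x_n(\bigwedge_i x_i=v_i\land s(r(x_1,\dots,x_n)))\big)$, and a direct computation of the right-hand side, using the cylinder property once more, turns it into exactly $\{h:\langle h(v_1),\dots,h(v_n)\rangle\in r^\gN\}=\mng_\gN(r(v_1,\dots,v_n))$.

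The main obstacle is bookkeeping with variables. The substitution tautology above is valid only when the frame $x_1,\dots,x_n$ is disjoint from $v_1,\dots,v_n$, whereas $r^\gN$ was defined through one fixed frame; for atomic formulas whose variables clash with that fixed frame I must route through an auxiliary frame of fresh distinct variables and then check that the relation read off from $\mng_\gM(s(r(\cdot)))$ is independent of which distinct frame is used. This independence-of-frame step, together with the need for $n$ distinct reference variables and fresh ones avoiding any prescribed finite clash, is exactly where the assumption that $V$ is infinite is indispensable; everything else is routine first-order manipulation. I would also note that, by Claim \ref{claim:subschar}, the conclusion can be rephrased as $\Mng_{\cL}=\Hom(\F,\Alg_m(\cL),\sim_{\cL})$, but I find the explicit construction of $\gN$ more transparent.
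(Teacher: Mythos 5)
Your proposal is correct, and the model $\gN$ you build is essentially the one the paper builds, but your route to it is organized differently. You verify Definition \ref{def:condsubstit} directly for a given pair $(s,\gM)$: writing $h=\mng_{\gM}\circ s$, you read $r^{\gN}$ off the cylinder $h(r(x_1,\dots,x_n))$ and handle clashing variable tuples by routing through fresh frames; the two families of tautologies you pull through $s$ (the $\exists w$ ones and the frame-substitution ones) are precisely the pairs the paper collects into its sets $S(r)$ and $E(r)$. The paper instead proves the sharper statement \eqref{folmng}: the meaning functions of $\FOL_t(V)$ are exactly those homomorphisms from $\F$ into members of $\Alg_m(\cL)$ whose kernel contains the explicit schematic set $D(t)=\bigcup\{S(r)\cup E(r):r\in\dom(t)\}$, and conditional substitutionality then follows from Claim \ref{claim:subschar}(iii). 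Your argument is more elementary at one point: because your homomorphism is $\mng_{\gM}\circ s$, every algebra element you must analyze is the meaning of an actual formula $s(r(v_1,\dots,v_n))$, so your cylinder property is just the finite-free-variable fact of first-order semantics; the paper, working with an arbitrary $h$ constrained only by $D(t)\subseteq\ker(h)$, has to establish the regularity property \eqref{reg} of concept-algebra elements to the same end. What the paper's formulation buys is the explicit identification of defining relations: only $D(t)$, not all of $\sim_{\cL}$, is needed, and this stronger fact is reused immediately after the proof (the discussion of restricted formulas) and matches the defining-relations viewpoint of Theorem \ref{csub4}. Since you, too, only ever use $D(t)$-type instances of your hypothesis $\sim_{\cL}\subseteq\ker(\mng_{\gM}\circ s)$, your proof contains that sharper information implicitly, but as written it does not extract it.
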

\begin{proof} Let us introduce some notation. We fix a one-to-one enumeration $v = \< v_i:\; i\in |V|\>$
of the variables, i.e., $V=\{ v_i : i\in|V|\}$. 
For a relation symbol $r$ of arity $n$ we write
\begin{equation}
	S(r) = \big\{
	\big\< r(v_0,\ldots,v_{n-1}),\ \exists v_j r(v_0,\ldots,v_{n-1})\big\>:\;
j\ge n	\big\}.
\end{equation}
For $r(v_{i_0}, \ldots, v_{i_{n-1}})$ let $r^*(v_{i_0}, \ldots, v_{i_{n-1}})$
denote the formula
\begin{equation}
	\exists v_{j_0}\cdots\exists v_{j_{n-1}}
	\big[ \Land_{k<n}v_{i_k}=v_{j_k}\;\land
	\exists v_0\cdots\exists v_{n-1}( 
	\Land_{k<n}v_{k}=v_{j_k}\;\land r(v_0,\ldots,v_{n-1})) \big] \label{eq:Rstar}
\end{equation}
where the variables $v_{j_0}$, $\ldots$, $v_{j_{n-1}}$
are the first in the enumeration that are not among
$v_0$,$\ldots$, $v_{n-1}$, $v_{i_0}$, $\ldots$, $v_{i_{n-1}}$. Write
\begin{equation}
	E(r) = \big\{
	\big\< r(v_{i_0}, \ldots, v_{i_{n-1}}),\ 
	r^*(v_{i_0}, \ldots, v_{i_{n-1}}) \big\>:\; v_{i_0},\ldots, v_{i_{n-1}}\in V
	\big\}. \label{eq:ER}
\end{equation}
Note that in the definition of $E(r)$ we used that there are infinitely many 
variables available. Finally, for a similarity type $t$ let us write
\begin{equation}
	D(t) = \bigcup\big\{ S(r)\cup E(r):\; r\in\dom(t) \big\}.
\end{equation}
The set $S(r)$ expresses that the relation $r(v_0,\ldots,v_{n-1})$ 
depends only on the variables $v_0$, $\ldots$, $v_{n-1}$,
while $E(r)$ expresses
that the meaning of $r(v_0,\ldots,v_{n-1})$ determines the meaning of
$r(v_{i_0}, \ldots, v_{i_{n-1}})$ for any sequence of variables. 

By rules of first-order logic for any similarity type $t$, the
set $D(t)$ is a subset of the tautological congruence of $\FOL_t(V)$. 

Let $\cL$ denote $\FOL_t(V)$ and $\F$ denote the formula algebra of $\cL$. We are 
going to show that the class of meaning functions of $\cL$ is the class of homomorphisms 
from $\F$ to $\Alg_m(\cL)$ whose kernels contain $D(t)$:
\begin{equation}\label{folmng}\Mng_{\cL}  =  \Hom(\F,\Alg_m(\cL), D(t)).\end{equation}

The inclusion $\subseteq$ in \eqref{folmng} is clear by $D(t)\subseteq\,\sim_{\cL}$. 
To prove the other direction, let $h:\F\to\gA\in\Alg_m(\cL)$ be such that $D(t)\subseteq\ker(h)$, we want to show that $h\in\Mng_\cL$.
Let $\gM\in M_t$ be such that $\gA=\mng_{\gM}(\F)$. 
Let us define the model 
$\gN = \<M, r^{\gN}\>_{r\in\dom(t)}$ as follows: for $r\in\dom(t)$, 
$t(r)=n$ write	
\begin{equation}
	r^{\gN} =\big\{ \<e(v_0),\ldots,e(v_{n-1})\>:\;
	 e\in h(r(v_0,\ldots,v_{n-1}))\big\} . \label{eq:mod1}		
\end{equation}
We claim that $h = \mng_{\gN}$. For this it is enough to check that
\begin{equation}
	\mng_{\gN}(p) = h(p) \label{nn}
\end{equation}
holds for each $p\in P_t(V)$ since $P_t(V)$ generates $\F$. Before proceeding, we introduce a property of $\gA$. For $a\in A$ let
\[\Delta(a) = \{ x\in V : (\exists x)^{\gA}(a)\ne a\} .\]
We say that $a\in A$ is \emph{regular in $\gA$} iff $e\in a$ depends only on its behaviour on $\Delta(a)$, i.e., 
\[ e\in a\mbox{ iff }e'\in a\quad\mbox{ for all $e,e'\in {}^VM$ such that}\ \  e\upharpoonright\Delta(a)=e'\upharpoonright\Delta(a) .\]
Now, it is not difficult to prove that 
\begin{equation}\label{reg} \mbox{all elements of $\gA$ are regular.}\end{equation}
To prove \eqref{nn}, let $r\in\Dom(t)$ and $n=t(r)$. Take first $p=r(v_0,\dots,v_{n-1})$ and let $e:V\to N$. Note that $S(t)\subseteq\ker(h)$ means that
\begin{equation}\label{dp} \Delta(h(p))\subseteq\{ v_0,\dots,v_{n-1}\} .\end{equation}
Now,
\begin{align*}
    e\in\mng_{\gN}(p) &\text{ iff } 
        \<e(v_0),\ldots,e(v_{n-1})\>\in r^{\gN} 
        && (\text{by def. of} \mng_{\gM}) \\
    &\text{ iff } 
        \<e(v_0),\ldots,e(v_{n-1})\>=\<e'(v_0),\ldots,e'(v_{n-1})\> \\
    & \qquad\quad \text{ for some } e'\in h(p)
    && (\text{by def. of } r^{\gN})\\
    &\text{ iff }  e\in h(p)
    && (\text{by} \eqref{reg}, \eqref{dp}).
\end{align*}

\noindent We have shown that $\mng_{\gN}(p)=h(p)$ for $p=r(v_0,\dots,v_{n-1})$. Take now $q=r(v_{i_0},\dots,v_{i_{n-1}})$. Then $q^*$ is a formula depending only on $p$, so $\mng_{\gN}(q^*)=h(q^*)$ by $\mng_{\gN}(p)=h(p)$, since both $\mng_{\gN}$ and $h$ are homomorphisms. We have $\mng_{\gN}(q)=\mng_{\gN}(q^*)$ by $E(t)\subseteq\,\sim_{\cL}$ and also $h(q)=h(q^*)$ by $E(t)\subseteq\ker(h)$. Hence, $\mng_{\gN}(q)=h(q)$. We have show that $\mng_{\gN}$ and $h$ agree on $P_t(V)$, so $\mng_{\gN}=h$, i.e., $h\in\Mng_{\cL}$ and we are done with proving \eqref{folmng}.

Finally, \eqref{folmng} implies that $\FOL_t(V)$ is conditionally substitutional, by \ref{claim:subschar}(iii). 
\end{proof}

\bigskip

\noindent One of the key steps of the proof was to show that if
$V$ is infinite, then every formula of $\FOL_t(V)$ is equivalent to a 
restricted formula (this is expressed
by the sets $E(r)$ in \eqref{eq:ER}). Let us make this idea more precise.
For a similarity type $t$ consider the following set
\begin{equation}
	P_t^r(V) =\big\{  r(v_0,\ldots,v_{n-1}):\; r\in\dom(t),\ t(r)=n  \big\},
\end{equation}
called the set of restricted atomic formulas. The set of \emph{restricted formulas}
$F_t^r(V)$ is generated by $P_t^r(V)$ using the first-order connectives
$\land$, $\lnot$, $\bot$, $\exists v_i$ and $v_i=v_j$ (for $i,j<|V|$). 
Applying an inductive argument using the construction in \eqref{eq:Rstar} we
get that any formula in $F_t(V)$ is equivalent to a restricted formula in $F_t^r(V)$,
provided $V$ is infinite. 

However, if $|V|=\alpha$ is finite and there is a relation 
symbol of arity $\alpha$, then not every formula is equivalent to a restricted one. 
For example, in the case $V=\{v_0, v_1\}$, the non-restricted atomic formula
$r(v_1, v_0)$ is not equivalent to any restricted formula (as permutation of variables
cannot be expressed by quantification and identities without reference to an extra variable), 
see, e.g., the proof of \ref{claim:FOLnnocondsubs} below.

We note that it is not the arity of the relation what matters but whether
the atomic formula uses up all the variables available. As an illustration, below we show
how to express $r(v_1,v_0,v_0)$ with a restricted formula, when $V=\{v_0,v_1,v_2\}$ (thus, we have $3$ variables, 
and $r$ is ternary; but $r$ does not use all the $3$ variables).
\begin{eqnarray*}
	r(v_1, v_0, v_0) &\Longleftrightarrow& \exists v_2( v_2=v_0\land r(v_1,v_2,v_2)) \\
	&\Longleftrightarrow& \exists v_2(v_2=v_0\land \exists v_0(v_0=v_1\land r(v_0,v_2,v_2))) \\
	&\Longleftrightarrow& \exists v_2( v_2=v_0\land \exists v_0( v_0=v_1 \land \exists v_1 (v_1=v_2\land r(v_0,v_1,v_1))  )   ).
\end{eqnarray*}

The assumption that there are infinitely many variables available was crucial in
Thm.\ref{thm:FOLconsubs}. We have seen in \ref{claim:FOLnosub} that $\FOL_t(V)$
is not substitutional (for $|V|>1$). The next claim shows that $\FOL_t(V)$
with a \emph{finite} and at least two-element set $V$ of variables is not even conditionally substitutional. 
We note that \ref{claim:FOLnnocondsubs} below follows, for complex enough $t$, also from Theorems \ref{thm:pwp2}, \ref{thm:pwp1} and \ref{thm:nofamily}.

\begin{example}\label{claim:FOLnnocondsubs}
	If $2\leq |V|<\omega$ and $t$ has at least one $|V|$-ary relation symbol then $\FOL_t(V)$ is not
	conditionally substitutional.
\end{example}
\begin{proof}
	We exhibit a model $\gM$ and a homomorphism $h:\F\to\mng_{\gM}(\F)$ such that the
	kernels of $\mng_{\gM}$ and $h$ coincide, but $h$ is not the meaning function 
	for any model $\gN$. By \ref{claim:subschar}(ii)  this shows that $\FOL_t(V)$ is not conditionally substitutional. 
	We present the example for $|V|=2$ and when $t$ has two binary relation symbols, $R$ and $S$. The model $\gM$ is defined as follows.
	\begin{eqnarray}
		M&=&\{ 0,1,2,3,4\}, \\
		R^{\gM}&=&\{\<u, v\> : v=u+1\mbox{ mod }5\},\\ 
		S^{\gM}&=&\{\< u, v\> : v=u+2\mbox{ mod }5\}.
	\end{eqnarray} 
	Then \begin{equation}
		\mng_{\gM}(R(v_1,v_0))=\{\< u, v\> : v=u+4\mbox{ mod }5\}
	\end{equation} and
	\begin{equation}
		\mng_{\gM}(S(v_1,v_0))=\{\< u, v\> : v=u+3\mbox{ mod }5\}.
	\end{equation}
	Define the homomorphism $h$ so that $h$ agrees 
	with $\mng_{\gM}$ on $R(v_0,v_1)$ and $S(v_0,v_1)$, 
	but $h(R(v_1,v_0))=\mng_{\gM}(S(v_1,v_0))$ and
	$h(S(v_1,v_0))=\mng_{\gM}(R(v_1,v_0))$. Let $\gA$ denote the concept algebra of $\gM$, i.e., $\gA=\mng_{\gM}(\F)$.
	It is not hard to check that there is an automorphism $\alpha$ of $\gA$ which leaves $\mng_{\gM}(R(v_0,v_1))$ and $\mng_{\gM}(S(v_0,v_1))$ fixed and interchanges $\mng_{\gM}(R(v_1,v_0))$ and $\mng_{\gM}(S(v_1,v_0))$. Then $h=\alpha\circ\mng_{\gM}$ since both are homomorphisms and they agree on the atomic formulas. So, $\ker(h)=\ker(\mng_{\gM})$ but $h$ is not the meaning function of any model $\gN$.		
\end{proof}

\noindent Let us now move on to the main topic of the paper.

\section{Logic as a scheme -- logic families}\label{sec:families}
In section \ref{sec:logic}, our example logics were defined in a way that there was a parameter
$P$ which stood for the set of atomic formulas. This parameter was either an arbitrary
set (e.g., in the case of classical propositional logic $\CPL$)
or was given by a similarity type as in the case of first-order logic $\FOL_t(V)$.  
Thus, in first-order logic the parameter $P$ had further structure,
not all sets could be considered to be the set of atomic formulas. 
In the next definition we capture the aspect of a logic where the set of atomic formulas is
a parameter --- we define what could be called a \emph{logic scheme}, reflecting that
the parameter $P$ can vary but the rest of the definition is of the same pattern.
Schemes of logics are called \emph{general logic}s in \cite{AGyNS2022} and  \cite{ANS2001}.
In the present paper we use the terminology logic family.

In subsection \ref{GLdef-subsec}, we define the notions of logic family and pre-family of logics, and we discuss the conditions in the definition.
In subsection \ref{disc4-subsec}, we begin discussing the least natural condition in a logic family, that which concerns union of signatures. 
In subsection \ref{subs:cs4}, we discuss this condition in the special case of conditionally substitutional pre-families and we show that each conditionally substitutional logic is a member of a logic family.

\subsection{Logic families and discussion of logic pre-families}\label{GLdef-subsec}
In what follows we give the definition of a logic family. This definition basically is the same as \cite[3.3.27]{AGyNS2022}, but there are differences which we mention in the discussion that follows. 
Of the conditions given in the definition, conditions (4) and (5) are specific for first-order-like logics, they are not present in the families of logics as specified so far for propositional-like logics. In this subsection we discuss condition (5) and in the next subsections we discuss condition (4).

\begin{definition}[Logic family, logic pre-family]\label{def:generallogic}
	A \emph{logic family} is a family $\LL = \< \cL^P:\; P\in\Sig\>$,
	where $\Sig$ is a class of sets, 
	$\cL^P$
	is a logic in the sense of 
	Definition \ref{def:logic} for each $P\in\Sig$, and conditions (1)--(5) 
	below are satisfied for any sets $P,Q\in\Sig$.
	\begin{enumerate}[(1)]\itemsep-2pt
		\item $P$ is the set of atomic formulas of $\cL^P$.
		\item $\Cn(\cL^P)=\Cn(\cL^Q)$.
		\item If $P\subseteq Q$, then $\cL^P$ is a reduct of $\cL^Q$. 
		\item If $P$ is the disjoint union of $P_i\in\Sig$, $i\in I$,  then
			the tautological formula algebra of $\cL^P$ is the conditionally $\Alg_m(\cL^P)$-free algebra, with conditions
			the union of the tautological congruences of $\cL^{P_i}$, i.e.,
			\begin{equation}
				\F^P\slash\!\sim^P\ =\ \Fr(\Alg_m(\cL^P),\; P,\; \cup_{i\in I}\sim^{P_i}).
							\label{gleq}
				\end{equation}			
				 \item The class of signatures is big enough in the following sense.
			\begin{enumerate}[(a)]\itemsep-2pt
					\item For any $P\in\Sig$ and set $H$, there is a $P'\in\Sig$ such that 
				$P'$ is disjoint from $H$ and $\cL^{P'}$ is an isomorphic copy of $\cL^{P}$.				
				\item For sets $P_i$, $i\in I$ from $\Sig$, where $I$ is nonempty,
				 there exist $P_i'\in \Sig$  such that $P'_i$, $i\in I$ are pairwise disjoint, 
			$\cL^{P_i'}$ is an isomorphic copy of $\cL^{P_i}$ and
			the union $\cup_{i\in I}P_i'$ belongs to $\Sig$.		
		\item $\Sig$ contains at least one nonempty set.
	\end{enumerate}
	\end{enumerate}
	For a class $\LL$ as above, $\sim^P$ denotes the tautological congruence 
	relation of its member logic $\cL^P$.
	We write
	\begin{equation}
		\Alg_m(\LL) = \bigcup_{P\in \Sig}\Alg_m(\cL^P) .
	\end{equation}
	A\emph{ pre-family of logics} is a system of logics which satisfies (1)--(3) and (5), i.e., it may not satisfy (4). A system $\LL$ of logics is called \emph{substitutional} when all its members are substitutional, and likewise $\LL$ is called \emph{conditionally substitutional} when all logics in it are such.
	\endef
\end{definition}

We begin discussing Definition \ref{def:generallogic}. First we clarify that by a family $\LL = \< \cL^P:\; P\in\Sig\>$ we mean a function with domain $\Sig$ such that this function assigns $\cL^P$ to a member $P\in\Sig$.%
\footnote{This function is a class-function, since $\Sig$ is a proper class in logic families, as we will see soon.} Thus, we can write $\LL(P)$ in place of $\cL^P$. 
Condition (1) expresses that the parameter $P\in\Sig$ stands for the set of atomic formulas in the individual logics of the family. Condition (2) is in harmony with the aim that the members of a family of logics differ only in the set of atomic formulas. In this, a family of logics differs from the systems of logics treated in combining logics%
\footnote{For this field see, e.g., Gabbay \cite{Fibring}, Beziau-Coniglio \cite{BC05}.}, where the emphasis is in changing the set of connectives of a logic. 

\paragraph{Discussion of condition (3) concerning reducts.} 
Condition (3) requires that the subset relation in $\Sig$ be reflected in the assigned logics as being a reduct.  This condition is quite extensively used in \cite{AGyNS2022} (e.g., Thms. \cite[4.1.5, 4.2.20, 4.3.6]{AGyNS2022}; cf. Remark \cite[3.3.30]{AGyNS2022}). It expresses that when we add new atomic formulas to a logic in the family, meaning and validity 
of old formulas are not changed: the logic with the extended set of atomic formulas is a conservative extension, see Proposition \ref{prop:reduct}. This expresses a kind of uniformity of logics in the family. 

In \cite[Definition 3.3.27]{AGyNS2022}, a weaker condition is used. Namely, in place of $\cL^P$ being a reduct of $\cL^Q$ when $P\subseteq Q$, only $\cL^P$ being a \emph{sublogic} of $\cL^Q$ is required, which means
	\begin{equation}
	\{\mng^P_{\gM} : \gM\in M^P\} = \{ \mng^Q_{\gM}\upharpoonright F^P:\; \gM\in M^Q\}.
	\label{eq:sublogic}
	\end{equation}
The difference is that in being a sublogic, only the meaning functions are used, validity is not mentioned. Further, in principle, several models may have the same meaning functions, which may prevent the existence of the reduct function $f^M:M^Q\to M^P$ with the required properties. However, the two notions coincide in most cases, because usually the meaning functions determine models and validity in the following sense.

\begin{definition}[referential transparency, meaning-determined]\label{def:reftr} Let $\LL=\langle\cL^P : P\in\Sig\rangle$ be a system of logics. 
	\begin{enumerate}[(i)]\itemsep-2pt
		\item We say that $\LL$ is \emph{referentially transparent} if for all $\gA\in\Alg_m(\LL)$ there is $T\subseteq A$ such that the following hold for all $P\in\Sig$ and $\gM\in M^P$:
		\[\mng_{\gM}(\F^P)\subseteq\gA\ \Rightarrow\ (\forall\phi\in F^P)[\gM\models\phi\Leftrightarrow\mng_{\gM}(\phi)\in T].\] 
		\item We say that a logic $\cL$ is \emph{model-slim}, if $\mng_{\gM}=\mng_{\gN}$ implies that $\gM=\gN$, for all models $\gM,\gN\in M$. We say that $\LL$ is model-slim if $\cL^P$ is model-slim for all $P\in\Sig$.
		\item We say that $\LL$ is \emph{meaning-determined} if it is both referentially transparent and model-slim.	\endef	
\end{enumerate}\end{definition}
Referential transparency of $\LL$ expresses that the meaning functions determine validity of formulas uniformly in all logics of the system.  Most logic families are referentially transparent. In particular, both $\LL_C$ and $\LL_{FOL}$ defined in section \ref{sec:exa} are transparent. More generally, all logic families with the filter property as defined in \cite[3.3.27]{AGyNS2022} are transparent. Similarly, most logics are model-slim, too.  

In \cite[Definition 3.3.27]{AGyNS2022}, we require a condition similar to (3) about isomorphisms:
\begin{enumerate}[(6)]\itemsep-2pt
	\item Any bijection $f:P\to Q$ that extends to a bijection between the 
	tautological formula algebras of $\cL^P$ and $\cL^Q$ induces an 
	isomorphism between $\cL^P$ and $\cL^Q$.
\end{enumerate}
Condition (6) expresses that the tautological formula-algebras of the logics in the system determine their logics, up to isomorphism. 
In a meaning-determined system of logics, isomorphism between member-logics is equivalent to a simpler statement, namely
\begin{equation}\label{simple-iso}
	f^F: \F^P\to\F^Q\mbox{ induces an isomorphism\quad iff \quad} \Mng^P = \{ g\circ f^F : g\in\Mng^Q\}.
\end{equation}
Let us say that $f^F$ induces a meaning-isomorphism between $\cL^P$ and $\cL^Q$ iff $\Mng^P = \{ g\circ f^F : g\in\Mng^Q\}$. Thus, inducing a meaning-isomorphism is weaker than inducing an isomorphism, but they are equivalent in meaning-determined families of logics. Let us formulate the corresponding weaker form of (6) as:
\begin{enumerate}[(6a)]\itemsep-2pt
	\item Any bijection $f:P\to Q$ that extends to a bijection between the 
	tautological formula algebras of $\cL^P$ and $\cL^Q$ induces a 
	meaning-isomorphism between $\cL^P$ and $\cL^Q$.
\end{enumerate}
We omitted condition (6) in this paper, because (6a) follows from conditions (1)-(5) used in this paper, in the case of conditionally substitutional logic families. See Theorem \ref{3-theorem}.  At the same time, in \cite{AGyNS2022}, condition (6) above is used only in case of conditionally substitutional logics, and only in the form of (6a) because in all properties discussed in \cite{AGyNS2022}, of isomorphisms only their meaning-isomorphism parts are used. Therefore, all theorems proved in \cite{AGyNS2022} for logic families are true for the present slightly weaker notion of a logic family, too.

\paragraph{Discussion of condition (5) on signatures.} 
In substitutional logics, the natural thing is that $\Sig$ is the class of all sets not containing proper terms, or all infinite such sets. The role of $\Sig$ is more important in first-order like logics. 

Condition (5)a ensures a flexibility in renaming atomic formulas so that the logic remains unchanged. Condition (5)a rules out, e.g., $\Sig = \{ \{ p_i : i<\alpha\} : \alpha \mbox{ is an ordinal} \}$. In the latter, there are no disjoint sets at all, so neither (5)a nor (5)b is satisfied.

Condition (5)b is the requirement that certain signatures can be merged. In propositional 
logics usually  $P\cup Q\in \Sig$ for $P, Q\in \Sig$ since there the parameter $P$ can be any set. However, not every
logic has this feature: in first-order logic, for example, it might happen that a 
relation symbol appears with different ranks in different similarity 
types. In such case one might not want to have 
the union of the two signatures as a signature. The problem can
be dissolved by taking isomorphic copies, that is, ``renaming'' the atomic formulas.
This is exactly what (5)b expresses: after appropriate ``renamings'' the union of the 
signatures will be a signature.

In formulating (5)b, the condition that $I$ is nonempty cannot be omitted, because without this condition (5)b would imply that the empty set is always in $\Sig$ but we do not want to require this. Indeed, if $I=\emptyset$, then $\bigcup\{ P_i : i\in I\}=\bigcup\emptyset = \emptyset$.

An important feature of $\Sig$ is that it contains arbitrarily large elements. Indeed, let $I$ be a arbitrary non-empty set and let $P\in \Sig$ be nonempty. Such $P$ is provided by (5)c. For $i\in I$ let $P_i=P$. 
By (5)b there are pairwise disjoint $P_i'\in\Sig$ for $i\in I$ such that the union $\cup_{i\in I}P_i'$ belongs to $\Sig$. This union must be as large as $I$.

Condition (5)c is included to ensure arbitrarily large elements of $\Sig$. Without (5)c, we would have two completely different cases: one in which $\Sig\subseteq\{\emptyset\}$ and one in which $\Sig$ has arbitrarily large elements. 
Item (5)c is a natural assumption even though it could be omitted. $\Sig=\{\emptyset\}$ is possible even in the first-order logic case  when the set of variables is empty. (Cf.\ Remark \ref{remark:FOLuresegyelemu}.) The reason we added (5)c to the definition is to avoid adding the line ``assuming $\Sig\neq\{\emptyset\}$'' to numerous statements about logic families. 

We note that in many cases, $\Sig$ is closed under taking subsets. However, this is
not always a feature of $\Sig$. Consider classical first-order logic $\FOL$
(Definition \ref{def:FOL}). 
Given a similarity type $t$, $P_t$ consists of atomic formulas of the form 
$r(v_{1}, \ldots, v_{{t(r)}})$ for a relation symbol $r$.  
Not every subset of $P_t$ is of form $P_{t'}$ for some $t'$ because if $r$ is of rank $n$, then the atomic formulas $r(v_1,\dots,v_n)$ have to be in $P_t$ for all choices $v_1,\dots,v_n\in V$. For example, the singleton $\{ r(v_{1}, \ldots, v_{{t(r)}})\}$ is not of form $P_{t'}$ for any $t$ when $|V|\ge 2$. 
\paragraph{General logics as defined earlier.} We finish this subsection with  making a comparison with the formerly used notion of a logic family that got refined in \cite[3.3.27]{AGyNS2022}. The notion of a logic family has formerly been defined in a somewhat different way in \cite[Def.3.2.16]{AKNS}, \cite[Def.39]{ANS2001}, \cite[Def.22]{BH06}, \cite[p.63]{FJ94c}. All these definitions are basically equivalent with each other, for substitutional logics.
	 Let us recall that definition for comparison with the one presented in this paper. In \cite[Def.3.2.16]{AKNS} a general 
	logic is defined to be an indexed family
	\begin{equation}
		\LL = \< \cL^P:\; P\text{ is a set}\>
	\end{equation}
	where $\cL^P$ is a logic in the sense of Def.\ref{def:logic} and 
	the following further stipulations hold.
	\begin{enumerate}[(i)]\itemsep-2pt
		\item $P$ is the set of atomic formulas of $\cL^P$.
		\item $\Cn(\cL^P)=\Cn(\cL^Q)$.
		\item Any bijection $f:P\to Q$ induces an isomorphism between $\cL^P$ and $\cL^Q$. 
		\item For $P\subseteq Q$, the logic $\cL^P$ is a sublogic of $\cL^Q$.
	\end{enumerate}
	The main striking difference is that $\LL$ is defined on the class all sets rather than on a distinguished class $\Sig$.
	Items (i), (ii) and (iv) are respectively the same as (1), (2) and (3) from Definition \ref{def:generallogic}.%
	\footnote{For simplicity, assume here that all logics discussed are meaning-determined.}
	Condition (5) automatically holds by (iii) and $\Sig$ being all sets. 
	We will see that (4) always holds for substitutional logic families (Corollary \ref{cor:subs}). Therefore, any substitutional general logic in the old sense is a logic family in the new sense, too. 

\subsection{Discussion of condition (4) in the general case} \label{disc4-subsec}
In this subsection, we formulate a stronger version (4b) of (4) and we characterize both by giving several equivalent statements shedding light to different aspects of the intuition behind (4). These equivalent statements prove to be quite useful later, e.g., in showing that (4b) is indeed stronger than (4).

We turn to discussing condition (4) which concerns union of signatures. This is the condition that is most restrictive and needs most explanation. This condition is not needed in all investigations. For example, it is not used in investigations connected to completeness of logics, but it is heavily used in investigations connected to compactness and definability issues. See, e.g., \cite[Sec. 4.3, 4.4]{AGyNS2022} and \cite{GyOz2022,OzPhd}. 
For the rest of this subsection, assume that $P_i,P\in\Sig$ for $i\in I$ and $P=\bigcup\{ P_i : i\in I\}$.

The intuition behind (4) is that the logic 
belonging to the union $P$ does not introduce any new relation between the atomic formulas.  
A bold formulation of this intuition would be 
\begin{equation}
\sim^P \ \ =\ \  \cup_{i\in I} \sim^{P_i}. \tag{4a}
\end{equation} So why is (4) more complicated than this?

The reason is that (4a) holds in very few logics, it does not hold even in classical propositional logic $\CPL$.
Let us take two disjoint sets $Q$ and $R$ from $\Sig$. The 
tautological congruence $\sim^{Q\cup R}$ might not even be generated in $\F^{Q\cup R}$ 
by the union of $\sim^Q$ and $\sim^R$. For instance, in classical propositional logic
 if $Q=\{q\}$ and $R=\{r\}$, then the formula $(q\to r)\lor (r\to q)$
is a tautology, i.e., it is $\sim^{Q\cup R}$ equivalent to $\top$. On the other hand,
it is not generated by tautologies using the propositional letters $q$ or $r$ only.  
Thus, 
\begin{equation}
	\sim^{Q\cup R}\ \ne\  \Cg^{\F^{Q\cup R}}(\sim^{Q}\cup\sim^{R}).
\end{equation}
In the above, and later in the paper, $\Cg^{\gA}(X)$ denotes the \emph{congruence relation of $\gA$ generated by} $X\subseteq A\times A$, i.e.,  $\Cg^{\gA}(X)$ is the least congruence of $\gA$ which contains $X$.

This example explains why our requirement in (4) is not even that the tautological 
congruence of the logic belonging to the union $P=\cup P_i$ is generated as a congruence by 
the union of the tautological congruences of the logics belonging to the $P_i$'s.

In place of (4a), we need to say that $\cL^P$ does not introduce any connection between the atomic formulas of different parts that is not compulsory for all formulas in $\cL^P$. There are several ways of expressing what we mean by compulsory connections. One way is to say that $\<\tau,\sigma\>$ is compulsory when it holds for all formulas in place of the atomic ones. A connection $\<\tau,\sigma\>\in F^P\times F^P$ holds for all formulas in $\cL^P$ when $(h\tau,h\sigma)\in\ \sim^P$ for all substitutions $h:\F^P\to\F^P$. Thus, the set of compulsory connections in this sense is just the substitution-invariant part $\si(\sim^P)$ of $\sim^P$. 
The next condition (4b) expresses that in the logic belonging to the union of the signatures only such compulsory connections hold between atomic formulas coming from different signatures of the union. That is, (4b) says that  $\sim^P$ is generated as a congruence by $\cup_{i\in I}\sim^{P_i}\cup\,\si(\sim^P)$ in $\F^P$,
\begin{equation}
	\sim^P  =  \Cg^{\F^P}(\cup_{i\in I}\sim^{P_i}\cup\,\si(\sim^P)) .\tag{4b}
\end{equation}
Theorem \ref{thm:4b} below lists some statements equivalent to (4b). Among others, it states that (4b) is equivalent to condition (6) in the definition of a general logic in \cite[Definition 3.3.27]{AGyNS2022}. 
 
Condition (4) in the definition of a logic family in the present paper is somewhat weaker than (4b) above, see Theorem \ref{example}. 
Our reason for using in this paper the weaker condition is that it turns out that the weaker (4) suffices in almost all situations, in particular, in the proofs in \cite{AGyNS2022} we always use the weaker (4) in place of the stronger (4b). Further, Theorem \ref{thm:pwp2} may suggest that (4) is more useful than (4b) in logic families.

Before stating Theorem \ref{thm:4b}, we recall condition (6) from \cite[Definition 3.3.27]{AGyNS2022}. The condition says that $\sim^P$ and $\cup_{i\in I}\sim^{P_i}$ generate the same congruences in the $P$-freely generated $\Alg_m(\cL^P)$-free algebra $\Fr(\Alg_m(\cL^P),P)$. We need to clarify this condition, because $\sim^P$ and $\cup_{i\in I}\sim^{P_i}$ are not binary relations on $\Fr(\Alg_m(\cL^P),P)$, they are binary relations on the formula algebra $\F^P$.

The most natural way of transferring a binary relation $R$ on an algebra to its homomorphic image, say by $f$, is to define $f(R)=\{\< f(a),f(b)\> : \< a,b\>\in R\}$.
Using this, the tautological congruences of the formula algebras can be transferred to congruences of the $\Alg_m(\cL)$-free algebra as follows.

For any $Q\in \Sig$ let $\mu^Q:\F^Q\to\Fr(\Alg_m(\cL^Q), Q)$ be the homomorphic extension of the identity mapping
$id:Q\to Q$, i.e., $\mu^Q(\tau) = \tau\slash C$, where $\Fr(\Alg_m(\cL^Q),Q)$ is the factor-algebra of $\F^Q$ by the congruence $C$.  
Define $\backsim^Q$ as the $\mu^Q$-image of $\sim^Q$, that is,
\begin{equation}
	\backsim^Q = \{ \<\mu^Q(\phi), \mu^Q(\psi)\>:\; \phi\sim^Q\psi\}.\label{eq:kuu}
\end{equation}
 Observe the difference between the symbols $\sim$ and $\backsim$. 
The surjective homomorphic image of a congruence is not necessarily a congruence, as transitivity can be violated.%
\footnote{In general, surjective 
homomorphic images of congruences are only tolerance relations: reflexive, symmetric and compatible.}
 However, in the present case $\backsim^Q$ is a congruence, because the kernel of $\mu^Q$ is contained in $\sim^Q$ as $\sim^Q$ is defined by the intersection of kernels of meaning homomorphisms, which are homomorphisms from the free algebra to members of $\Alg_m(\cL^Q)$. Indeed, an alternative way would be to define $\backsim^Q$ as follows.
As $\Fr(\Alg_m(\cL^Q), Q)$ has the universal mapping property with respect to the class 
$\Alg_m(\cL^Q)$, for any model $\gM\in M^Q$ there is a homomorphism $m_{\gM}^Q:\Fr(\Alg_m(\cL^Q), Q)\to \mng_{\gM}^Q(\F^Q)$
such that $m_{\gM}^Q\circ \mu^Q = \mng_{\gM}^Q$, i.e., the diagram below commutes.

\begin{center}
	\begin{tikzcd}[column sep=small]
		\F^Q \arrow{rr}{\mu^Q} \arrow[swap]{dr}{\mng_{\gM}^Q}& &\Fr(\Alg_m(\cL^Q), Q) \arrow{dl}{m_{\gM}^Q}\\
		& \mng_{\gM}^Q(\F^Q) & 
	\end{tikzcd} 
\end{center}  

\noindent It is not hard to check that 
\begin{equation}
	\mu^Q(\phi)\backsim^Q\mu^Q(\psi)\;\text{ if and only if }\; (\forall \gM\in M^Q)\;
	m^Q_{\gM}(\mu^Q(\phi)) = m^Q_{\gM}(\mu^Q(\psi)),
\end{equation} 
that is, $\backsim^Q$ is $\bigcap_{\gM\in M^Q}\ker(m_{\gM}^Q)$. Notice
that $\phi\sim^R\psi$ if and only if $\mu^R(\phi)\backsim^R\mu^R(\psi)$.

\bigskip

We turn to characterizing conditions (4b) and (4). By characterizing we mean listing equivalent conditions that highlight different aspects. The notion of a $\K$-free product of algebras will be used in the characterizations. A free product is a special conditionally free algebra, see \cite[end of 0.4.65]{HMT}, it is introduced in \cite{Los}. Let $\K$ be a class of similar algebras and let $\gA_i$ for $i\in I$ be algebras. An algebra $\gA$ is the \emph{$\K$-free product of $\langle\gA_i : i\in I\rangle$} if the following three conditions hold: (a) $\gA_i$ is a subalgebra of $\gA$, for all $i\in I$, (b) $\gA$ is generated by the union of the universes of $\gA_i, i\in I$ and (c) for any $\gB\in\K$ any family of homomorphisms $f_i:\gA_i\to\gB$ can be extended to a homomorphism $f:\gA\to\gB$ such that $f\supseteq\cup\{f_i : i\in I\}$. It can be checked that $\gA$ is unique up to isomorphism when $\gA\in\K$ is required.

We begin with characterizing the stronger condition. The following theorem lists some conditions equivalent to (4b). In it, \eqref{4bsub} is the same as (4b) and \eqref{4balg} is the same as (6) in \cite[3.3.27]{AGyNS2022}. Note that \eqref{4bsub} is a condition that is formulated in terms of syntactic-oriented notions, without the use of $\Alg_m(\cL)$. 

\begin{theorem}[characterization of (4b)]\label{thm:4b} 	Let $\LL=\<\cL^P:\;P\in \Sig\>$ be a pre-family of logics. Let $P_i\in \Sig$ be sets 
	for $i\in I$ such that the union $P=\bigcup_{i\in I}P_i$ belongs to $\Sig$.
Conditions (i)-(iv) below are equivalent. Further, (i) holding for all $P_i\in\Sig$ such that $\cup_{i\in I}P_i\in\Sig$ is equivalent to \eqref{4bsepb} holding for all such $P_i, i\in I$, and to \eqref{4bprodb} holding for all such $P_i, i\in I$.
\begin{enumerate}[(i)]\itemsep-2pt
	\item \label{4bsub}
	$\sim^P = \Cg^{\F^P}(\cup_{i\in I}\sim^{P_i}\cup\,\si(\sim^P))$.
	\item \label{4balg}
	$\Cg^{\Fr(\Alg_m(\cL^P),P)}(\backsim^{P})\ =\ 
	\Cg^{\Fr(\Alg_m(\cL^P),P)}(\cup_{i\in I}\backsim^{P_i})$.
		\item \label{4bsepa}
	$\big(\forall h\in \Hom(\F^P,\HSP\Alg_m(\cL^P)) \big)\big( 
	\;\cup\!\sim^{P_i}\subseteq \ker(h)\ \; \Longrightarrow\;\  
	\sim^P\subseteq\ker(h)\; \big)$.
		\item  \label{4bproda}
	$\F^P\slash\!\sim^P$ is an $\HSP\Alg_m(\cL^P)$-free product of the $\F^{P_i}\slash\!\sim^{P_i}$ for $i\in I$.
	\item \label{4bsepb}
	$\big(\forall h\in \Hom(\F^P,\HSP\Alg_m(\LL)) \big)\big( 
	\;\cup\!\sim^{P_i}\subseteq \ker(h)\ \; \Longrightarrow\;\  
	\sim^P\subseteq\ker(h)\; \big)$.
	\item  \label{4bprodb}
	$\F^P\slash\!\sim^P$ is an $\HSP\Alg_m(\LL)$-free product of the $\F^{P_i}\slash\!\sim^{P_i}$ for $i\in I$.
	\end{enumerate}
\end{theorem}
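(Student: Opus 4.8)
The plan is to prove the six conditions in two groups: first the fixed-family equivalences (i)$\Leftrightarrow$(ii)$\Leftrightarrow$(iii)$\Leftrightarrow$(iv), and then the \emph{for all families} equivalences among (i), (v), (vi), isolating the one genuinely hard implication.

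The backbone of the first group is a single observation about kernels. By Theorem~\ref{fmalg}(ii),(iii), $\si(\sim^P)$ is the equational theory of $\Alg_m(\cL^P)$ and $\F^P\slash\si(\sim^P)=\Fr(\Alg_m(\cL^P),P)$ is the free algebra of the variety $\HSP\Alg_m(\cL^P)$; hence $\F^P\slash\Theta\in\HSP\Alg_m(\cL^P)$ iff $\Theta\supseteq\si(\sim^P)$, so that $\{\ker h : h\in\Hom(\F^P,\HSP\Alg_m(\cL^P))\}$ is exactly the set of congruences above $\si(\sim^P)$. First I would record that the inclusion $\supseteq$ in (i) is automatic: $\sim^{P_i}=\sim^P\cap(F^{P_i}\times F^{P_i})\subseteq\sim^P$ by Proposition~\ref{prop:reduct}, and $\si(\sim^P)\subseteq\sim^P$ by taking $h=\id$ in \eqref{si}. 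With the kernel description in hand, (iii) reads: every congruence $\Theta\supseteq\si(\sim^P)$ with $\cup_i\sim^{P_i}\subseteq\Theta$ satisfies $\sim^P\subseteq\Theta$; since the least such $\Theta$ is $\Cg^{\F^P}(\cup_i\sim^{P_i}\cup\si(\sim^P))$, this is equivalent to $\sim^P\subseteq\Cg^{\F^P}(\cup_i\sim^{P_i}\cup\si(\sim^P))$, i.e.\ to (i). For (i)$\Leftrightarrow$(ii) I would invoke the correspondence theorem for the surjection $\mu^P:\F^P\to\Fr(\Alg_m(\cL^P),P)$, whose kernel is $\si(\sim^P)$: congruences of $\Fr(\Alg_m(\cL^P),P)$ correspond to congruences of $\F^P$ above $\si(\sim^P)$, and under this bijection $\backsim^P$ corresponds to $\sim^P$ while $\Cg(\cup_i\backsim^{P_i})$ corresponds to $\Cg^{\F^P}(\cup_i\sim^{P_i}\cup\si(\sim^P))$, so (ii) translates verbatim into (i).

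The equivalence (iii)$\Leftrightarrow$(iv) is the universal property of the free product written out. Conditions (a),(b) in the definition of a free product hold unconditionally: the maps $\phi\slash\sim^{P_i}\mapsto\phi\slash\sim^P$ embed $\F^{P_i}\slash\sim^{P_i}$ into $\F^P\slash\sim^P$ by Proposition~\ref{prop:reduct}, and $P=\cup_iP_i$ generates $\F^P$. For (c), given $\gB\in\HSP\Alg_m(\cL^P)$ and homomorphisms $f_i:\F^{P_i}\slash\sim^{P_i}\to\gB$, I would compose with the quotient maps and glue along the (disjoint) generators to a single $h:\F^P\to\gB$ with $\cup_i\sim^{P_i}\subseteq\ker h$; then (iii) gives $\sim^P\subseteq\ker h$, so $h$ factors through $\F^P\slash\sim^P$ and yields the required extension. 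Conversely, an $h$ witnessing a failure of (iii) restricts on each $\F^{P_i}$ to a map through $\F^{P_i}\slash\sim^{P_i}$, and the free-product extension forces $\sim^P\subseteq\ker h$. The same argument, with $\Alg_m(\LL)$ in place of $\Alg_m(\cL^P)$, proves (v)$\Leftrightarrow$(vi). Finally, since $\Alg_m(\cL^P)\subseteq\Alg_m(\LL)$ gives $\HSP\Alg_m(\cL^P)\subseteq\HSP\Alg_m(\LL)$, conditions (v),(vi) quantify over a larger class of test algebras than (iii),(iv); so (v)$\Rightarrow$(iii) and (vi)$\Rightarrow$(iv) hold for each fixed family, whence \emph{for all families} (v) implies (i) and (vi) implies (i).

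The one substantial step is the converse: \emph{for all families} (i) implies (v). Reformulating as above, (v) for the family $(P_i)_{i\in I}$ says $\sim^P\subseteq\Cg^{\F^P}(\cup_i\sim^{P_i}\cup\si_\LL(\sim^P))$, where $\si_\LL(\sim^P)=\Cr(\Alg_m(\LL),P)$ is the (smaller) equational theory of the \emph{whole} family; so the content is to replace $\si(\sim^P)$ by $\si_\LL(\sim^P)$. The idea is to absorb an arbitrary test algebra into one large logic. Given $h:\F^P\to\gB$ with $\gB\in\HSP\Alg_m(\LL)$ and $\cup_i\sim^{P_i}\subseteq\ker h$, I would use conditions (5)a,(5)b to choose pairwise disjoint isomorphic copies of $P$ and of every signature whose meaning algebra occurs among the generators of $\gB$, with union $R\in\Sig$ and $P\subseteq R$; by condition (3) and Proposition~\ref{prop:reduct} each such meaning algebra lies in $\Sub\Alg_m(\cL^R)$, whence $\gB\in\HSP\Alg_m(\cL^R)$. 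Enlarging $(P_i)_i$ by the auxiliary copies to a decomposition of $R$ and invoking (i), equivalently (iv), \emph{at} $R$, I would then extend the data to level $R$ and descend again via $\sim^P=\sim^R\cap(F^P\times F^P)$. I expect the main obstacle to sit exactly here: to apply the free-product property of $\F^R\slash\sim^R$ I must supply homomorphisms from the auxiliary factors $\F^{Q^*}\slash\sim^{Q^*}$ into the given $\gB$ (equivalently, extend $h$ over the new generators while keeping each auxiliary tautological congruence in the kernel). This is the point where the homomorphic-image part of $\HSP$ cannot be bypassed and where conditions (3) and (5) must be used in full strength; handling it — by realizing the auxiliary factors through the very meaning algebras that build $\gB$, so that their tautological congruences carry precisely the equational content of $\si_\LL$ that $\si(\sim^P)$ lacks — is the technical heart of the proof.
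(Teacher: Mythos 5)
Your treatment of the local equivalences is correct and in places tighter than the paper's. You characterize the kernels of homomorphisms $\F^P\to\HSP\Alg_m(\cL^P)$ as exactly the congruences containing $\si(\sim^P)$ (via Theorem \ref{fmalg}(ii),(iii) and $\Fr(\K,X)=\Fr(\HSP\K,X)$), which collapses (i)$\Leftrightarrow$(iii) into a one-line lattice argument, and you get (i)$\Leftrightarrow$(ii) from the correspondence theorem for $\mu^P$; the paper instead closes a cycle (i)$\Rightarrow$(ii)$\Rightarrow$(iii) and (iv)$\Rightarrow$(i), using the identity $q\big(\Cg^{\gA}(\Psi\cup\Sigma)\big)=\Cg^{\gA/\Sigma}(q(\Psi))$ and the second isomorphism theorem. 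Your gluing/restriction argument for (iii)$\Leftrightarrow$(iv) and (v)$\Leftrightarrow$(vi) is exactly the paper's Lemma \ref{lemma:aux1}, and the easy global direction ((v)$\Rightarrow$(iii) pointwise, from $\Alg_m(\cL^P)\subseteq\Alg_m(\LL)$) agrees with the paper as well.

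The genuine gap is exactly where you flag it: the implication from ``(i) for all decompositions'' to ``(v) for all decompositions'' is announced as the technical heart but never carried out. This is the content of the paper's Lemma \ref{lemma:aux3} (with $\TT=\HSP$), whose shape matches your sketch: writing $\gB\in\HSP\{\gA_j:j\in J\}$ with $\gA_j\in\Alg_m(\cL^{Q''_j})$, one uses (5)a,(5)b twice to produce signatures $Q_i\in\Sig$ with $P_i\subseteq Q_i$, each $Q_i$ containing a disjoint copy of $Q''_i$, and $Q=\bigcup_{i\in I}Q_i\in\Sig$, so that $\gB\in\HSP\Alg_m(\cL^Q)$ by condition (3); then one extends $h_i=h\upharpoonright F^{P_i}$ to $k_i:\F^{Q_i}\to\gB$ with $\sim^{Q_i}\subseteq\ker(k_i)$, glues these into $k:\F^Q\to\gB$ with $k\upharpoonright F^P=h$, applies the hypothesis at the decomposition $Q=\bigcup_i Q_i$ to get $\sim^Q\subseteq\ker(k)$, and descends via $\sim^P\;=\;\sim^Q\cap(F^P\times F^P)$ (Proposition \ref{prop:reduct}). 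What you leave open is precisely the existence of the extensions $k_i$ — homomorphisms over the enlarged signature that keep the auxiliary tautological congruences in the kernel while agreeing with $h$ on $F^{P_i}$. Your closing remark that handling this step ``is the technical heart'' is an accurate diagnosis, but it is a statement of the problem, not a solution: without producing the $k_i$ (note that the factors $\gA_j$ building $\gB$ need not be subalgebras of $\gB$, so there is no ready-made map from the auxiliary Lindenbaum--Tarski algebras into $\gB$), the final two equivalences of the theorem remain unproven. The paper itself dispatches this existence claim quite tersely, which confirms rather than removes the need to argue it; as submitted, your proposal proves (i)--(iv) and (v)$\Leftrightarrow$(vi), plus one direction of the global statement, but not the theorem in full.
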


\begin{proof} Note first that $\cup_{i\in I}\sim^{P_i}\subseteq\ \sim^P$ by Proposition \ref{prop:reduct}(i) and Definition \ref{def:generallogic}(3), and thus $\cup_{i\in I}\backsim^{P_i}\subseteq\ \backsim^P$.  
We start with a general lemma that we will make use of in this proof and
also in the proof of Theorem \ref{gen4} below.

    \begin{lemma}\label{lemma:aux1}
        For any class $\K$ of similar algebras, (a) and (b)
        below are equivalent.
        \begin{enumerate}[(a)]\itemsep-2pt
            \item $\big(\forall h\in \Hom(\F^P,\K) \big)\big( 
		      \;\cup\!\sim^{P_i}\subseteq \ker(h)\ \; \Longrightarrow\;\  
		      \sim^P\subseteq\ker(h)\; \big)$.
            \item $\F^P\slash\!\sim^P$ is a $\K$-free product of the 
                $\F^{P_i}\slash\sim^{P_i}$ for $i\in I$.
        \end{enumerate}
    \end{lemma}
    \begin{proof}[of Lemma \ref{lemma:aux1}]
        Proof of (a)$\Rightarrow$(b): By Proposition \ref{prop:reduct}(i), $\F^{P_i}\slash\!\sim^{P_i}$ for $i\in I$ can be considered as subalgebras of $\F^{P}\slash\!\sim^{P}$ that generate $\F^{P}\slash\!\sim^{P}$. 
        Take any $\gB\in \K$ and homomorphisms 
        $f_i:\F^{P_i}\slash\!\sim^{P_i}\;\to\gB$. 
        Let $q_i:\F^{P_i}\to \F^{P_i}\slash\!\sim^{P_i}$ be the quotient mappings. 
        By the universal mapping property of $\F^{P_i}$ there are $h_i:\F^{P_i}\to\gB$
        such that $h_i=f_i\circ q_i$ for $i\in I$.
        \begin{center}
	    \begin{tikzcd}[column sep=large]
		\F^{P_i} \arrow[dashed]{dr}{h_i} \arrow[swap]{d}{q_i} &  \\
		 \F^{P_i}\slash\!\sim^{P_i}\arrow[swap]{r}{f_i} & \gB
	    \end{tikzcd} \qquad\quad
	    \begin{tikzcd}[column sep=large]
		\F^{P} \arrow{dr}{h} \arrow[swap]{d}{q} &  \\
		 \F^{P}\slash\!\sim^{P}\arrow[swap,dashed]{r}{f} & \gB
	    \end{tikzcd} 
        \end{center}  
        Let $h:\F^P\to\gB$ be the homomorphic extension of $\cup_{i\in I}h_i$. As
        $\sim^{P_i}\subseteq\ker(h_i)$ for each $i\in I$, it follows that
        $\cup_{i\in I}\sim^{P_i}\subseteq\ker(h)$. By (a), 
        $\ker(q)=\;\sim^P\subseteq\ker(h)$. Thus, there is $f:\F^P\slash\!\sim^P\to\gB$
        such that $f\circ q = h$. Clearly $\cup\{f_i: i\in I\}\subseteq f$.\\

        Proof of (b)$\Rightarrow$(a): Take any $h:\F^P\to\gA\in\K$ 
        and suppose $\cup_{i\in I}\sim^{P_i}\subseteq\ker(h)$. Write 
        $h_i=h\upharpoonright F^P_i$, \ $h_i:\F^{P_i}\to\gA$. Clearly $\sim^{P_i}\subseteq\ker(h_i)$, and thus (by the second isomorphism 
        theorem, see e.g. \cite[Thm. 3.5]{Bergman}) there are surjective homomorphisms $f_i:\F^{P_i}\slash\!\sim^{P_i}\to\gA$. Using (b) and properties of the
        free product, there is a surjective $f:\F^{P}\slash\!\sim^{P}\to\gA$ such 
        that $f\supseteq\cup\{f_i:i\in I\}$. If $q:\F^P\to\F^P\slash\!\sim^P$ is the quotient mapping, then $f\circ q = h$, showing that $\sim^P\subseteq\ker(h)$. 
    \end{proof}

    \noindent Let us continue the proof of Theorem \ref{thm:4b}.
    The equivalences (iii)$\Leftrightarrow$(iv) and 
    (v)$\Leftrightarrow$(vi) follow from Lemma
    \ref{lemma:aux1} with $\K$ being $\HSP\Alg_m(\cL^P)$ in the first 
    equivalence, and $\HSP\Alg_m(\LL)$ in the second.
    In what follows, we write $\K=\Alg_m(\cL^P)$ and
    $\Fr^P = \Fr(\Alg_m(\cL^P), P)$. \\

    Proof of (ii)$\Rightarrow$(iii): Let $h:\F^P\to\gA\in\HSP\K$ be
    a homomorphism and write $\mu:\F^P\to\Fr^P$ for the quotient mapping 
    (cf. the lines above \eqref{eq:kuu}). $\Fr^P$ has the universal
    mapping property with respect to the class $\HSP\K$, thus there
    is $f:\Fr^P\to\gA$ such that $h = f\circ \mu$. Now, if
    $\cup_{i\in I}\sim^{P_i}\subseteq\ker(h)$, then 
    $\Cg^{\Fr^{P}}(\cup_{i\in I}\backsim^{P_i})\subseteq \ker(f)$. By (ii)
    then $\backsim^{P}\subseteq\ker(f)$ and thus $\sim^P\subseteq \ker(h)$.\\

Proof of (i)$\Rightarrow$(ii): We prove the following more general statement.
Suppose $\Psi\subseteq \Pi$ are binary relations over an algebra $\gA$ and 
$\Sigma$ is a congruence of $\gA$. Let $q:\gA\to\gA/\Sigma$ be the quotient mapping. Then
\begin{align}
    \Cg^{\gA}(\Pi)=\Cg^{\gA}(\Psi\cup \Sigma) \qquad\Rightarrow\qquad
    \Cg^{\gA/\Sigma}(q(\Pi)) = \Cg^{\gA/\Sigma}(q(\Psi)). \label{eq:4.3i.1}
\end{align}
From Theorem \ref{fmalg}(ii) we know that $\Fr^P$ equals
$\F^P\slash\si(\sim^P)$, therefore applying \eqref{eq:4.3i.1} with $\gA=\F^P$,
$\Pi=\;\sim^{P}$, $\Sigma=\si(\sim^P)$, and $\Psi = \cup_{i\in I}\sim^{P_i}$ yields
the proof of (i)$\Rightarrow$(ii). Let us prove \eqref{eq:4.3i.1}. Assume that
$\Cg^{\gA}(\Pi)=\Cg^{\gA}(\Psi\cup \Sigma)$ holds. The inclusion 
$\Cg^{\gA/\Sigma}(q(\Pi)) \supseteq \Cg^{\gA/\Sigma}(q(\Psi))$ is straightforward
from the assumption $\Psi\subseteq\Pi$. For the converse inclusion pick 
$x,y\in A$ and assume
$\<x/\Sigma, y/\Sigma\>\in \Cg^{\gA/\Sigma}(q(\Pi))$. Then $\<x,y\>\in\Cg^{\gA}(\Pi)$ and thus $\<x,y\>\in \Cg^{\gA}(\Psi\cup \Sigma)$. Universal algebra
shows (see e.g. \cite[Exercise 3.9/2]{Bergman})
\[  q\big(\Cg^{\gA}(\Psi\cup\Sigma)\big)\ =\ \Cg^{\gA/\Sigma}(q(\Psi)).\]
Therefore $\<x/\Sigma, y/\Sigma\>\in \Cg^{\gA/\Sigma}(q(\Psi))$ as desired.\\

Proof of (iv)$\Rightarrow$(i): As we noted at the very beginning of this proof, 
$\cup_{i\in I}\sim^{P_i}\subseteq\;\sim^{P}$, and by definition 
$\si(\sim^{P})\subseteq\;\sim^{P}$. Hence, the inclusion $\supseteq$ in (i) 
is straightforward. To prove the reverse inclusion, take an arbitrary congruence
$\theta \supseteq \cup_{i\in I}\sim^{P_i}\cup\,\si(\sim^P)$. We need to show that
$\sim^P\subseteq\theta$. 
Using that $\si(\sim^{P_i})\subseteq\theta$, by the second isomorphism theorem, $\F^P\slash\theta$ is a homomorphic image of $\F^P\slash\si(\sim^P)$, which latter algebra is the free algebra $\Fr^P$ (by Theorem \ref{fmalg}(ii)). In particular, $\F^P\slash\theta$ belongs to $\HSP\K$. 
Similarly, using that $\sim^{P_i}\subseteq\theta$ for $i\in I$, there are surjective
homomorphisms $f_i:\F^{P_i}\slash\!\sim^{P_i}\to\F^P\slash\theta$. Let 
$f:\F^P\slash\!\sim^{P}\to\F^P\slash\theta$ 
be the homomorphism from the free product property satisfying $f\supseteq\cup\{f_i:i\in I\}$. As the $\F^{P_i}\slash\!\sim^{P_i}$, taken as subalgebras, generate $\F^P\slash\!\sim^{P}$ (by the free product property), $f$ must be surjective. Therefore, $\sim^P\subseteq\theta$, as desired. 
\begin{center}
	\begin{tikzcd}[column sep=large]
		\F^{P} \arrow{r}{q} \arrow[swap]{d}{\mu} & \F^P\slash\theta \\
		 \F^{P}\slash\si(\sim^{P})\arrow[swap,dashed]{ur}{ } & 
	\end{tikzcd} \qquad\quad
	\begin{tikzcd}[column sep=large]
		\F^{P_i} \arrow{r}{q} \arrow[swap]{d}{\mu_i} & \F^P\slash\theta \\
		 \F^{P_i}\slash\!\sim^{P_i}\arrow[swap,dashed]{ur}{f_i} & 
	\end{tikzcd} 
\end{center}  
\bigskip

Finally, that (iii) holding for all $P_i\in\Sig$ such that $P=\cup P_i\in\Sig$
is equivalent to (v) holding for all such $P_i$, $i\in I$ follows from the lemma below with the choice $\TT=\HSP$. 

\begin{lemma}\label{lemma:aux3}
    Let $\TT\in\{\SP,\HSP\}$ be a class operator. The following are
    equivalent.
    \begin{enumerate}[(a)]\itemsep-2pt
        \item For every $P_i\in\Sig$ for $i\in I$ such that $P=\cup_{i\in I}P_i\in\Sig$, 
        \[
            \big(\forall h\in \Hom(\F^P,\TT\Alg_m(\cL^P)) \big)\big( 
	\;\cup\!\sim^{P_i}\subseteq \ker(h)\ \; \Longrightarrow\;\  
	\sim^P\subseteq\ker(h)\; \big)
        \]
        \item For every $P_i\in\Sig$ for $i\in I$ such that $P=\cup_{i\in I}P_i\in\Sig$, 
        \[
            \big(\forall h\in \Hom(\F^P,\TT\Alg_m(\LL)) \big)\big( 
	\;\cup\!\sim^{P_i}\subseteq \ker(h)\ \; \Longrightarrow\;\  
	\sim^P\subseteq\ker(h)\; \big)
        \]
    \end{enumerate}
\end{lemma}
\begin{proof}[of Lemma \ref{lemma:aux3}]
    The direction (b)$\Rightarrow$(a) is immediate from $\Alg_m(\cL^P)\subseteq\Alg_m(\LL)$. Conversely, for the (a)$\Rightarrow$(b) direction, let $h:\F^P\to\gA\in\TT\Alg_m(\LL)$ be such that $\cup_{i\in I}\sim^{P_i}\subseteq \ker(h)$. There are $Q_j''\in\Sig$ 
    and $\gA_j\in\Alg_m(\cL^{Q_j''})$ for $j\in J$ such that 
    $\gA\in\TT\{\gA_j:j\in J\}$.
    Without loss of generality we can assume that $I=J$.
    By Def. \ref{def:generallogic}(5), for each $i\in I$ there are 
    $P_i', Q_i'\in\Sig$ such that $P_i'\cup Q_i'\in \Sig$ and 
    $P_i', Q_i'$ are isomorphic copies respectively
    of $P_i$ and $Q_i''$. Apply \ref{def:generallogic}(5) again to 
    $P_i'\cup Q_i'$ to get $Q_i\in\Sig$ such that 
    $\cup_{i\in I}Q_i\in\Sig$ and $Q_i$ contains an
    isomorphic copy of $P_i$ and $Q_i''$. To simplify notation we do 
    not denote the isomorphisms between these respective logics, 
    but instead we can assume that for $i\in I$,
    \[  Q_i\in \Sig, \quad 
    P_i\subseteq Q_i, \quad
    Q=\cup_{i\in I}Q_i\in\Sig,\quad \gA\in\TT\Alg_m(\cL^Q) .\]
    Let $h_i=h\upharpoonright F^{P_i}:\F^{P_i}\to\gA$. By assumption, 
    $\sim^{P_i}\subseteq \ker(h_i)$. As $P_i\subseteq Q_i$ and $\F^{Q_i}$ is an
    absolutely free algebra, there are $k_i:\F^{Q_i}\to\gA$ such that
    $\sim^{Q_i}\subseteq\ker(k_i)$ and $k_i\upharpoonright F^{P_i} = h_i$ for $i\in I$.
    Let $k:\F^{Q}\to\gA$ be such that $k\supseteq\cup\{k_i:i\in I\}$. Then 
    \[k: \F^Q\to\TT\Alg_m(\cL^Q), \text{ and }  \cup_{i\in I}\sim^{Q_i}\subseteq\ker(k),\]
    and we assumed that (a) holds whenever $Q_i\in\Sig$ are such that 
    $\cup Q_i\in\Sig$. Therefore, $\sim^{Q}\subseteq\ker(k)$. Notice that
    $k\upharpoonright F^P = h$, and by Proposition \ref{prop:reduct}, 
    $\sim^{P}= \sim^{Q}\upharpoonright F^P$. Hence, $\sim^P\subseteq\ker(h)$.
\end{proof}
The proof of Theorem \ref{thm:4b} is completed.
\end{proof}

\noindent Theorem \ref{thm:4b}\eqref{4bsub} yields that condition (4) is not needed in the substitutional case.

\begin{corollary}\label{cor:subs}
	Substitutional pre-families of logics are always families of logics with the stronger condition (4b).
\end{corollary}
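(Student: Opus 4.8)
The plan is to verify condition \eqref{4bsub} of Theorem \ref{thm:4b} — which is verbatim (4b) — for every admissible decomposition, and then to invoke that (4b) is stronger than (4). Concretely, fix sets $P_i\in\Sig$ for $i\in I$ whose union $P=\bigcup_{i\in I}P_i$ lies in $\Sig$; the goal is to show $\sim^P=\Cg^{\F^P}(\cup_{i\in I}\sim^{P_i}\cup\,\si(\sim^P))$.

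The only nontrivial ingredient is substitutionality. Since $\LL$ is substitutional, its member $\cL^P$ is substitutional, so Proposition \ref{prop:syntsubs} gives $\si(\sim^P)=\;\sim^P$. Hence $\sim^P$ is itself among the generators on the right-hand side of (4b), which already forces the inclusion $\supseteq$. For the reverse inclusion, note that the full generating set is sandwiched inside $\sim^P$: we have $\si(\sim^P)=\;\sim^P$, and $\cup_{i\in I}\sim^{P_i}\subseteq\;\sim^P$ by Proposition \ref{prop:reduct}(i) together with Definition \ref{def:generallogic}(3), exactly as recorded at the start of the proof of Theorem \ref{thm:4b}. Since $\sim^P$ is already a congruence, it follows that $\Cg^{\F^P}(\cup_{i\in I}\sim^{P_i}\cup\,\si(\sim^P))=\Cg^{\F^P}(\sim^P)=\;\sim^P$, establishing (4b) for the chosen decomposition and hence for all of them.

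Finally I would close by recalling that (4b) implies the weaker (4) (see the discussion around Theorem \ref{example}), so a substitutional pre-family automatically satisfies (4) and is therefore a logic family, indeed one meeting the stronger requirement (4b). There is no serious obstacle here: the whole argument is the observation that substitutionality collapses $\si(\sim^P)$ onto $\sim^P$ via Proposition \ref{prop:syntsubs}, after which (4b) degenerates to the trivial identity $\Cg^{\F^P}(\sim^P)=\;\sim^P$. The only point demanding a moment's attention is checking that the generating set lies inside $\sim^P$, which is immediate from the reduct structure guaranteed by condition (3).
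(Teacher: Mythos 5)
Your proof is correct and follows exactly the paper's own argument: substitutionality collapses $\si(\sim^P)$ onto $\sim^P$ via Proposition \ref{prop:syntsubs}, the inclusion $\cup_{i\in I}\sim^{P_i}\subseteq\;\sim^P$ comes from Proposition \ref{prop:reduct}(i) and Definition \ref{def:generallogic}(3), and then the right-hand side of (4b) trivially equals $\sim^P$. The paper states this more tersely ("immediately imply (4b)"), while you additionally spell out the closing step that (4b) implies (4), which the paper records separately just before Theorem \ref{example}; the content is the same.
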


\begin{proof} 
That $\sim^P=\si(\sim^P)$ when $\cL^P$ is substitutional, by Proposition \ref{prop:syntsubs}, together with that $\cup_{i\in I}\sim^{P_i}\subseteq\ \sim^P$, by Proposition \ref{prop:reduct}(i) and Definition \ref{def:generallogic}(3), immediately imply (4b).
\end{proof}

We turn to formulating some conditions equivalent to the weaker condition (4). Recall the notion of the largest sound and substitutional deductive system $\vd_{\cL}$ belonging to a logic $\cL$, from above Theorem \ref{ded}. Note that \eqref{4subd} is analogous to Theorem \ref{thm:4b}\eqref{4bsub} and \eqref{4} is equivalent to (4) by Theorem \ref{fmalg}(i).

\begin{theorem}[characterization of (4)]\label{gen4}	Let $\LL=\<\cL^P:\;P\in \Sig\>$ be a pre-family of logics. Let $P_i\in \Sig$ be sets 
	for $i\in I$ such that the union $P=\bigcup_{i\in I}P_i$ belongs to $\Sig$.
	Conditions \eqref{4subd}-\eqref{4prodaa} are equivalent. Further, (i) holding for all $P_i\in\Sig$ such that $\cup_{i\in I}P_i\in\Sig$ is equivalent to \eqref{4homb} holding for all such $P_i, i\in I$, and to \eqref{4prodb} holding for all such $P_i, i\in I$.
	\begin{enumerate}[(i)]\itemsep-2pt
	\item \label{4subd}
	$\sim^P$ is the $\vd_{\cL^P}$-closure of $\cup_{i\in I} \sim^{P_i}$.
	\item \label{4}
		$\Fr(\Alg_m(\cL^P),\; P,\; \cup\!\sim^{P_i})\ =\ 
		\Fr(\Alg_m(\cL^P),\; P,\; \sim^{\cup P_i})	$.
		\item \label{4homa}
		$\big(\forall h\in \Hom(\F^P,\Alg_m(\cL^P)) \big)\big( 
		\;\cup\!\sim^{P_i}\subseteq \ker(h)\ \; \Longrightarrow\;\  
		\sim^P\subseteq\ker(h)\; \big)$.
		\item \label{4homaa}
		$\big(\forall h\in \Hom(\F^P,\SP\Alg_m(\cL^P)) \big)\big( 
		\;\cup\!\sim^{P_i}\subseteq \ker(h)\ \; \Longrightarrow\;\  
		\sim^P\subseteq\ker(h)\; \big)$.
		\item \label{4proda}
		$\F^P\slash\!\sim^P$ is an $\Alg_m(\cL^P)$-free product of the $\F^{P_i}\slash\!\sim^{P_i}$ for $i\in I$.
			\item \label{4prodaa}
		$\F^P\slash\!\sim^P$ is an $\SP\Alg_m(\cL^P)$-free product of the $\F^{P_i}\slash\!\sim^{P_i}$ for $i\in I$.
		\item \label{4homb}
		$\big(\forall h\in \Hom(\F^P,\SP\Alg_m(\LL)) \big)\big( 
		\;\cup\!\sim^{P_i}\subseteq \ker(h)\ \; \Longrightarrow\;\  
		\sim^P\subseteq\ker(h)\; \big)$.
		\item \label{4prodb}
		$\F^P\slash\!\sim^P$ is an $\SP\Alg_m(\LL)$-free product of the $\F^{P_i}\slash\!\sim^{P_i}$ for $i\in I$.
	\end{enumerate}
\end{theorem}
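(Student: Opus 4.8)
The plan is to collapse all of \eqref{4subd}--\eqref{4prodaa} onto the single algebraic identity
\begin{equation}
	\sim^P\ =\ \Cr\bigl(\Alg_m(\cL^P),\,P,\,\cup_{i\in I}\sim^{P_i}\bigr), \tag{$\ast$}
\end{equation}
and then to read off the free-product reformulations and the ``for all decompositions'' assertions from Lemmas \ref{lemma:aux1} and \ref{lemma:aux3}, which were already established inside the proof of Theorem \ref{thm:4b}. First I would record, as at the start of that proof, that $\cup_{i\in I}\sim^{P_i}\subseteq\,\sim^P$ holds by Proposition \ref{prop:reduct}(i) and Definition \ref{def:generallogic}(3). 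Consequently every meaning homomorphism $\mng^P_{\gM}$ belongs to $\Hom(\F^P,\Alg_m(\cL^P),\cup_{i\in I}\sim^{P_i})$, so the inclusion $\Cr(\Alg_m(\cL^P),P,\cup_{i\in I}\sim^{P_i})\subseteq\,\sim^P$ is automatic; hence ($\ast$) is equivalent to the reverse inclusion, which is verbatim the statement that every $h\in\Hom(\F^P,\Alg_m(\cL^P))$ with $\cup_{i\in I}\sim^{P_i}\subseteq\ker(h)$ also satisfies $\sim^P\subseteq\ker(h)$. This is precisely \eqref{4homa}, so ($\ast$)$\Leftrightarrow$\eqref{4homa}.

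Next I would settle the rest of the first block. Condition \eqref{4} equates two conditionally free algebras over $\F^P$; since by Theorem \ref{fmalg}(i) the second of them is $\F^P/\!\sim^P$, that is, its defining congruence is $\sim^P$, condition \eqref{4} literally says ($\ast$) (this is also the sense in which \eqref{4} matches condition (4) of Definition \ref{def:generallogic}). For the $\SP$-variants \eqref{4homaa} and \eqref{4prodaa} I would use the identity $\Fr(\K,X,S)=\Fr(\SP\K,X,S)$ recorded before Theorem \ref{fmalg}: it gives $\Cr(\SP\Alg_m(\cL^P),P,\cup_{i\in I}\sim^{P_i})=\Cr(\Alg_m(\cL^P),P,\cup_{i\in I}\sim^{P_i})$, so running the previous paragraph's argument with $\SP\Alg_m(\cL^P)$ in place of $\Alg_m(\cL^P)$ again reduces \eqref{4homaa} to ($\ast$). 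The two free-product conditions then come for free from Lemma \ref{lemma:aux1}, applied with $\K=\Alg_m(\cL^P)$ to obtain \eqref{4homa}$\Leftrightarrow$\eqref{4proda} and with $\K=\SP\Alg_m(\cL^P)$ to obtain \eqref{4homaa}$\Leftrightarrow$\eqref{4prodaa}.

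The one genuinely new ingredient is the deductive condition \eqref{4subd}, and I expect this to be the main obstacle. Writing $\Sigma=\cup_{i\in I}\sim^{P_i}$, I would first check that the $\vd_{\cL^P}$-closure of $\Sigma$ equals $\{\<\tau,\sigma\>:\Sigma\vd_{\cL^P}\<\tau,\sigma\>\}$; this requires only reflexivity and a cut rule for $\vd_{\cL^P}$, both of which read off directly from the semantic definition \eqref{vddef}. Then, by Theorem \ref{ded}(ii), $\Sigma\vd_{\cL^P}\<\tau,\sigma\>$ holds iff $\Alg_m(\cL^P)\models\wedge\Sigma\to[\tau=\sigma]$, and unwinding the satisfaction of this infinitary quasi-equation exactly as in \eqref{eqeqeq} says that $\<\tau,\sigma\>\in\Cr(\Alg_m(\cL^P),P,\Sigma)$. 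Hence the $\vd_{\cL^P}$-closure of $\Sigma$ is exactly $\Cr(\Alg_m(\cL^P),P,\Sigma)$, and \eqref{4subd} (that $\sim^P$ is this closure) is once more ($\ast$). This closes the equivalence of \eqref{4subd}--\eqref{4prodaa}.

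Finally, for the ``for all decompositions'' statements I would mimic the end of the proof of Theorem \ref{thm:4b}. Lemma \ref{lemma:aux1} with $\K=\SP\Alg_m(\LL)$ gives \eqref{4homb}$\Leftrightarrow$\eqref{4prodb} for each fixed decomposition, while the passage from $\Alg_m(\cL^P)$ to the global class $\Alg_m(\LL)$ is exactly Lemma \ref{lemma:aux3} taken with $\TT=\SP$: it states that \eqref{4homaa} for all $P_i\in\Sig$ with $\cup_{i\in I}P_i\in\Sig$ is equivalent to \eqref{4homb} for all such $P_i$. Chaining this with the per-decomposition equivalences \eqref{4subd}$\Leftrightarrow$\eqref{4homaa} and \eqref{4homb}$\Leftrightarrow$\eqref{4prodb} yields that \eqref{4subd} for all decompositions is equivalent to \eqref{4homb} for all decompositions and to \eqref{4prodb} for all decompositions. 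The conceptual point underlying the whole argument is that here $\SP$ is harmless, $\Cr(\SP\K,X,S)=\Cr(\K,X,S)$, whereas passing to $\HSP$ genuinely changes the conditionally free algebra; this is exactly what makes \eqref{4} weaker than (4b).
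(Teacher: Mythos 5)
Your proposal is correct. Its second half coincides with the paper's own proof: the free-product reformulations come from Lemma \ref{lemma:aux1} with $\K$ equal to $\Alg_m(\cL^P)$, $\SP\Alg_m(\cL^P)$ and $\SP\Alg_m(\LL)$ respectively, and the passage between \eqref{4homaa} holding for all decompositions and \eqref{4homb} holding for all decompositions is exactly Lemma \ref{lemma:aux3} with $\TT=\SP$. Where you genuinely diverge is in the handling of \eqref{4subd} and \eqref{4}. The paper proves \eqref{4subd}$\Leftrightarrow$\eqref{4homa} directly: it establishes Claim \ref{claim:aux1} (kernels of homomorphisms into $\Alg_m(\cL^P)$ are $\vd_{\cL^P}$-closed, via the factorization $h=\mng_{\gM}\circ s$) and then verifies by hand that $\cup_{i\in I}\sim^{P_i}\ \vd_{\cL^P}\ \<\tau,\sigma\>$ for every $\<\tau,\sigma\>\in\;\sim^P$; and it treats \eqref{4} through a separate auxiliary lemma (Lemma \ref{lemma:aux2}) equating the identity of conditionally free algebras with a kernel condition. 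You instead collapse all six conditions onto the single identity $\sim^P=\Cr(\Alg_m(\cL^P),P,\cup_{i\in I}\sim^{P_i})$, reusing Theorem \ref{ded}(ii) for \eqref{4subd} (after the observation that the $\vd_{\cL^P}$-closure of a set is its set of $\vd_{\cL^P}$-consequences, which indeed needs only reflexivity and cut, both immediate from \eqref{vddef}) and Theorem \ref{fmalg}(i) for \eqref{4}. Nothing is circular here, since Theorem \ref{ded}(ii) is established before this theorem; its proof carries the same factorization trick that the paper re-runs inside Claim \ref{claim:aux1}. Your route is shorter and makes transparent that condition (4) is precisely the coincidence of two congruences on $\F^P$, with $\SP$ harmless and only $\HSP$ changing the picture; the paper's route is self-contained at this point and yields an explicit semantic two-way argument between the deductive condition \eqref{4subd} and the kernel condition \eqref{4homa}.
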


\begin{proof} 
    Note first that $\cup_{i\in I}\sim^{P_i}\subseteq\ \sim^P$ by Proposition \ref{prop:reduct}(i) and Definition \ref{def:generallogic}(3).\\
    
    Proof of (i)$\Leftrightarrow$(iii): Let $h:\F^P\to\gA\in\Alg_m(\cL^P)$
    be a homomorphism. We claim first that $\ker(h)$ is $\vd_{\cL^P}$-closed.
    \begin{claim}
        $\ker(h)$ is $\vd_{\cL^P}$-closed for every 
        $h:\F^P\to\gA\in\Alg_m(\cL^P)$.
        \label{claim:aux1}
    \end{claim}
    \begin{proof}[of Claim \ref{claim:aux1}]
    Recall that $\vd_{\cL^P}$-closedness of $\ker(h)$ means that
    if $H\subseteq\ker(h)$ and $H\vd_{\cL^P}\<\tau,\sigma\>$, i.e. 
   	\begin{align}
         (\forall f:\F^P\to\F^P)(\forall \gN\in M^P)\big[f(H)\subseteq \ker(\mng_{\gN}) 
         \Rightarrow\< f\tau,f\sigma\>\in \ker(\mng_{\gN})\big], 
    \label{vddeff}\end{align}
    then $\<\tau,\sigma\>\in\ker(h)$.
    Let $\gM\in M^P$ be such that $\gA=\mng_{\gM}(\F^P)$. There is 
    a homomorphism $s:\F\to\F$ such that 
	$h = \mng_{\gM}\circ\; s$. To obtain such $s$, note that 
	for each $p\in P$ there is $\psi_p\in F^P$ such that 
	$h(p)=\mng_{\gM}(\psi_p)$. Take then
	$s$ to be the homomorphic extension of the mapping $p\mapsto\psi_p$. 
	\begin{center}
		\begin{tikzcd}[column sep=large]
			\F^P \arrow{r}{h} \arrow[swap,dashed]{d}{s} & \mng_{\gM}(\F^P) \\
			\F^P \arrow[swap]{ur}{\mng_{\gM}} & 
		\end{tikzcd} 
	\end{center}  
    Let now $H\subseteq\ker(h)$ and assume $H\vd_{\cL^P}\<\tau,\sigma\>$.
    Then \eqref{vddeff} with $f=s$ and $\gN=\gM$ gives
   	\begin{align*}
         s(H) \subseteq \ker(\mng_{\gM}) 
         \quad\Rightarrow\quad\< s\tau,s\sigma\>\in \ker(\mng_{\gM}), 
    \end{align*}
    that is, 
   	\begin{align*}
         H \subseteq \ker(\mng_{\gM}\circ s) 
         \Rightarrow\< \tau, \sigma\>\in \ker(\mng_{\gM}\circ s).
    \end{align*}
    As $\mng_{\gM}\circ s$ equals $h$, and $H\subseteq\ker(h)$ by assumption,
    we get $\<\tau,\sigma\>\in\ker(h)$.
    \end{proof}

    \noindent Let us continue proving the equivalence (i)$\Leftrightarrow$(iii).
    Assume (i) and let $h:\F^P\to\Alg_m(\cL^P)$ be such that
    $\cup\sim^{P_i}\subseteq\ker(h)$. As $\ker(h)$ is $\vd_{\cL^P}$-closed, 
    by Claim \ref{claim:aux1}, and $\sim^P$ is the smallest $\vd_{\cL^P}$-closed
    set containing $\cup\sim^{P_i}$, we get $\sim^P\subseteq\ker(h)$. For the converse direction, assume (iii) holds. It is enough to show that
    \begin{align}
    \cup\sim^{P_i}\ \vd_{\cL^P}\;\<\tau,\sigma\> \quad\text{ for each }
    \<\tau,\sigma\>\in\;\sim^P.\label{lev:1}
    \end{align}
    Indeed, if $\cup\!\sim^{P_i}\;\subseteq\Sigma$ is closed, then \eqref{lev:1}
    ensures that $\sim^P\subseteq\Sigma$. To prove \eqref{lev:1}, 
    let $\<\tau,\sigma\>\in\sim^P$ and take
    arbitrary $f:\F^P\to\F^P$ and $\gM\in M^P$. We need to check that
    \begin{align}
        \text{if }\ \cup\sim^{P_i}\subseteq\ker(\mng_{\gM}\circ f), 
        \text{ then } \ 
        \<\tau,\sigma\>\in\ker(\mng_{\gM}\circ f).\label{lev:2}
    \end{align}
    But $\mng_{\gM}\circ f$ is a homomorphism from $\F^P$ into $\Alg_m(\cL^P)$, 
    and thus \eqref{lev:2} is ensured by (iii).\\

    To prove the rest of the equivalences we make use of the following
    lemma.
    \begin{lemma}\label{lemma:aux2}
        For any class $\K$ of similar algebras, (a) and (b)
        below are equivalent.
        \begin{enumerate}[(a)]\itemsep-2pt
            \item $\Fr(\K,\; P,\; \cup\!\sim^{P_i})\ =\ 
                \Fr(\K,\; P,\; \sim^{\cup P_i})$.
            \item $\big(\forall h\in \Hom(\F^P,\K) \big)\big( 
		      \;\cup\!\sim^{P_i}\subseteq \ker(h)\ \; \Longleftrightarrow\;\  
		      \sim^P\subseteq\ker(h)\; \big)$.
        \end{enumerate}
    \end{lemma}
    \begin{proof}[of Lemma \ref{lemma:aux2}]
        (b) is equivalent to $\Hom(\K,P,\cup\sim^{P_i}) = \Hom(\K,P,\sim^{P})$, while (a) is equivalent to
        $\Cr(\K,P,\cup\sim^{P_i}) = \Cr(\K,P,\sim^{P})$.
        The statement follows from that $\Fr(\K,P,S) = \F^P\slash\Cr(\K,P,S)$ and
        $\Cr(\K,P,S) = \cap\{\ker(h):\; h\in\Hom(\K,P,S)\}$.
    \end{proof}

    \noindent With the choice $\K=\Alg_m(\cL^P)$, Lemma \ref{lemma:aux2}
    gives (ii)$\Leftrightarrow$(iii) and Lemma \ref{lemma:aux1} gives (iii)$\Leftrightarrow$(v).
    Recall that
    $\Fr(\K,X,S)=\Fr(\SP\K,X,S)$ (see e.g. \cite[2.5]{AGyNS2022} or \cite[0.4.65]{HMT}).
    Hence, by letting $\K=\SP\Alg_m(\cL^P)$, Lemma \ref{lemma:aux2} yields
    (ii)$\Leftrightarrow$(iv) while Lemma \ref{lemma:aux1} gives (iv)$\Leftrightarrow$(vi). The equivalence (vii)$\Leftrightarrow$(viii) follows from Lemma \ref{lemma:aux1} with $\K=\SP\Alg_m(\LL)$.
    Lemma \ref{lemma:aux3} with $\TT=\SP$ infers the equivalence (vi)$\Leftrightarrow$(vii).
\end{proof}
\bigskip

\noindent Comparing Theorems \ref{thm:4b}\eqref{4bsepa} and \ref{gen4}\eqref{4homa} immediately yields that (4b) implies (4). The following theorem states that (4) is indeed weaker than (4b). We will see later that the natural pre-family formed from $\FOL_t(V)$ when $2\le |V|<\omega$ does not satisfy even condition (4), see Theorem \ref{thm:nofamily}. These show that the conditions (4a), (4b), (4) are strictly weaker in this order, and (4) is still not vacuous.
Thus we have
\[ \qquad (4a) \qquad\underset{\not\!\Leftarrow}{\Rightarrow}\qquad (4b) \qquad\underset{\not\!\Leftarrow}{\Rightarrow}\qquad (4)\qquad.   \]

\begin{theorem}[(4) is weaker than (4b)]\label{example} 
	There is a conditionally substitutional pre-family of logics which satisfies (4) but which does not satisfy (4b).
	\end{theorem}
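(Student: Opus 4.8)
The plan is to exploit the fact that conditions (4) and (4b) differ only in the class operator applied to the meaning algebras: by Theorem~\ref{gen4}\eqref{4homaa}, condition (4) says that every $h\in\Hom(\F^P,\SP\Alg_m(\cL^P))$ with $\bigcup_i\sim^{P_i}\subseteq\ker(h)$ already satisfies $\sim^P\subseteq\ker(h)$, while by Theorem~\ref{thm:4b}\eqref{4bsepa} condition (4b) is the \emph{same} statement with $\SP$ replaced by $\HSP$. Hence it suffices to build a conditionally substitutional pre-family together with a two-block decomposition $P=P_1\cup P_2$ and a pair $\langle p,q\rangle\in\ \sim^P$ that is collapsed by every homomorphism into $\SP\Alg_m(\cL^P)$ respecting $\sim^{P_1}\cup\sim^{P_2}$, yet survives under some homomorphism into a proper homomorphic image lying in $\HSP\Alg_m(\cL^P)$. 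In short, I want a quasi-identity that holds throughout the quasivariety generated by the meaning algebras but fails in the generated variety, and I want the conditions to make exactly this quasi-identity bite.

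Concretely, I would take the similarity type $\Cn=\{+,f\}$ with $+$ binary and $f$ unary (crucially, \emph{no constants}, hence no closed terms), fix the single meaning algebra $\gA=\langle\mathbb{N},+,f\rangle$ with $+$ ordinary addition and $f$ the successor map, let $\Sig$ be the class of all sets of atoms, and for each $P\in\Sig$ declare the models of $\cL^P$ to be the homomorphisms $h\colon\F^P\to\gA$ whose kernel contains $S^P=\{\langle a+a,\,a\rangle:a\in P\}$, with meaning given by evaluation and validity read off from a fixed target set (say $h\models\phi$ iff $h(\phi)=0$). Since $0$ is the only $+$-idempotent of $\gA$, each such $h$ sends every atom to $0$, so $\cL^P$ has a single model, its concept algebra is $\langle 0\rangle_\gA=\gA$, and $\sim^P=\ker(h)$ identifies two formulas iff they take the same value when every atom is set to $0$. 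By construction $\Mng^P=\Hom(\F^P,\gA,S^P)$, so the pre-family is conditionally substitutional by Claim~\ref{claim:subschar}(iii); conditions (1),(2) are immediate, (3) holds because restricting the unique model of $\cL^Q$ to $\F^P$ gives the unique model of $\cL^P$ (cf.\ Proposition~\ref{prop:reduct}), and (5) is routine since $\Sig$ is all sets.

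For condition (4), observe that every $\gC\in\SP\{\gA\}$ again has a unique $+$-idempotent. If $h\colon\F^P\to\gC$ respects $\bigcup_i\sim^{P_i}$, then each atom $a$, lying in some block $P_i$, satisfies $a+a\sim^{P_i}a$, forcing $h(a)$ to be idempotent and hence equal to that unique idempotent; thus $h$ factors through the single model and $\ker(h)=\ \sim^P$. By Theorem~\ref{gen4}\eqref{4homaa} this establishes (4) for every decomposition. To refute (4b) I take $P_1=\{p\}$, $P_2=\{q\}$ and the congruence $\theta$ on $\gA$ merging $n$ with $n+2$ for all $n\ge 2$; the quotient $\gB=\gA/\theta\in\HSP\{\gA\}=\HSP\Alg_m(\cL^P)$ has \emph{two} $+$-idempotents, namely $[0]$ and $[2]$. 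I then use the homomorphism $h\colon\F^{\{p,q\}}\to\gB$ with $h(p)=[0]$ and $h(q)=[2]$: each restriction factors through evaluation of a single generator at an idempotent of $\gB$, so $h$ respects $\sim^{\{p\}}$ and $\sim^{\{q\}}$, yet $h(p)\ne h(q)$ although $\langle p,q\rangle\in\ \sim^P$; by Theorem~\ref{thm:4b}\eqref{4bsepa} this shows (4b) fails.

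The hard part is the delicate balance when choosing the conditions: they must pin the atoms of each block to a \emph{unique} element of the quasivariety (so that $p$ and $q$ are forced together in $\sim^P$), while keeping the concept algebras nontrivial (otherwise $\si(\sim^P)$ becomes total and (4b) holds vacuously) and keeping the two blocks from being bridged by $\bigcup_i\sim^{P_i}$. Idempotence resolves all three tensions simultaneously: it is satisfiable and pins every atom to $0$; but $0$ generates all of $\gA$ under the successor, so the concept algebras stay rich and $\si(\sim^P)$ (the equation theory of $\gA$, by Theorem~\ref{fmalg}(iii)) does not contain $\langle p,q\rangle$; and the absence of constants means there are no closed terms, so $\bigcup_i\sim^{P_i}$ cannot relate $p$ to $q$. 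The one genuine computation I would carry out carefully is that $h$ above still annihilates $\sim^{\{p\}}\cup\sim^{\{q\}}$: for a one-variable term $s$ one has $s(c)=\lambda(s)\,c+s(0)$ where $\lambda(s)\ge 1$ is the number of variable occurrences, so $s([2])=[\,2\lambda(s)+s(0)\,]$; since every value is at least $2$, terms with equal value at $0$ differ by an even amount above the threshold and are therefore identified by $\theta$. This is precisely where the choice of $f$ as successor (rather than a cycle), the period-$2$ quotient, and the absence of constants all conspire, and it is the step I expect to require the most care.
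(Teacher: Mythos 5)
Your proposal is correct and follows essentially the same route as the paper's own proof: a pre-family built over a single meaning algebra, defining conditions that pin every atom to the solution set of one fixed equation, condition (4) obtained from the fact that this equation has at most one solution in every member of $\SP$ of that algebra (via Theorem \ref{gen4}\eqref{4homaa}), and condition (4b) refuted by a homomorphic image in which the equation acquires a second solution, separated along a decomposition into two singleton signatures (via Theorem \ref{thm:4b}\eqref{4bsepa}). The only real difference is the instantiation: the paper uses the finite three-element algebra $\{T,F,D\}$ with one unary connective and the pinning equation $\lnot x=x$, refuting (4b) with the two-element quotient that makes both elements fixed points, whereas you use the infinite algebra $\langle\mathbb{N},+,f\rangle$ with $x+x=x$ and a period-two quotient, which costs you the extra (correct) computation $s(c)=\lambda(s)\,c+s(0)$ and the parity argument that the paper's finite example avoids.
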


\begin{proof} For any sets	$I,J$ let $P(I,J)=\{ N_i : i\in I\}\cup\{ D_j : j\in J\}$ and let $\Sig=\{ P(I,J) : I,J\mbox{ are sets}\}$. Let $\Cn$ consist of one unary connective $\lnot$. Let $\gB$ be the algebra with universe $B=\{T,F,D\}$  and with unary operation $\lnot^{\gB}$ interchanging $T$ and $F$ and leaving $D$ fixed. For sets $I,J$, let $\Mng(I,J)$ be the set of all homomorphisms $f:\F(I,J)\to\gB$ such that $f(D_j)=D$ for all $j\in J$. 
For sets $I,J$ we define the logic $\cL(I,J)$ such that its set of atomic formulas is $P(I,J)$, its only connective is $\lnot$ and the class of its meaning functions is $\Mng(I,J)$.  The class of models of $\cL(I,J)$ is defined to be $\Mng(I,J)$ and the validity relation is defined so that $\M\models\phi$ iff $M(\phi)\in\{ T,D\}$. This finishes the definition of the logics $\cL(I,J)$ for all $I,J$.
Intuitively, there are two kinds of atomic formulas in $\cL(I,J)$, the ``normal''  $N_i$s for which the negation $\lnot$ can have its ``normal'' meaning, and the ``dialectic'' $D_j$s whose intended meaning is that their negations are themselves.	
	Let
	\[ \LL = \langle \cL(I,J) : P(I,J)\in\Sig\rangle .\] First we show that $\LL$ is a conditionally substitutional pre-family of logics. Clearly, $\Alg_m(\cL(I,J))=\{\gB\}$ and $\Mng(\cL(I,J))=\{ f\in\Hom(\F(I,J),\gB): S(I,J)\subseteq\ker(f)\}$ where $S(I,J)=\{\langle D_j,\lnot D_j\rangle : j\in J\}$. Thus, $\cL(I,J)$ is a conditionally substitutional logic, for each pair of sets $I,J$, by Claim \ref{claim:subschar}(iii). 
	
	Conditions (1)-(2) in the definition of a family of logics are clearly satisfied by $\LL$. To check (3), notice first that $P(I,J)\subseteq P(I',J')$ exactly when $I\subseteq I'$ and $J\subseteq J'$, and it is clear that $\cL(I,J)$ is the natural restriction of $\cL(I',J')$ in the latter case. For (5), notice first that a bijection $b:P(I,J)\to P(I',J')$ extends to an isomorphism between the corresponding logics if and only if $b$ takes $I$ to $I'$ and $J$ to $J'$. Notice also that $\Sig$ is closed under taking arbitrary unions and under taking subsets. From these, it is easy to check that $\LL$ satisfies (5). Therefore, $\LL$ is a pre-family of logics.
	
	To show that $\LL$ satisfies (4), first we check what $\sim^P$ is. Let $P=P(I,J)$. Let $\omega$ denote the set of nonnegative integers and let $\lnot^kp$ denote $p$ preceded by $k$ copies of $\lnot$, for $k\in\omega$ and $p\in P$. Now, it is easy to check that
	\[ \sim^P = \{\langle\lnot^k N_i,\lnot^{k+2n} N_i\rangle : k,n\in\omega, i\in I\}\cup\{\langle \lnot^k D_j, \lnot^n D_q\rangle : k,n\in\omega, j,q\in J\} .\]
	
	See Figure \ref{example-fig}. Let $h:\F^P\to\gB$. Then $\sim^P\subseteq\ker(h)$ iff $h(D_j)=D$ for all $j\in J$. This shows that Theorem~\ref{gen4}\eqref{4homa} holds, thus $\LL$ satisfies (4). 
	
	To show that $\LL$ does not satisfy (4b), we use Theorem~\ref{thm:4b}\eqref{4bsepa}. Let  $\gA$ be the homomorphic image of $\gB$ via the homomorphism taking $D$ to $a$ and taking $T$ as well as $F$ to $b$ where $a,b$ are distinct. Thus, in $\gA$, both $a$ and $b$ are fixed points of the unary operation. Clearly, $\gA\in\HSP\{\gB\}$ and $\sim^P\subseteq\ker(h)$ for a homomorphism $h:\F^P\to\gA$ iff $h(D_j)=h(D_q)$ for  all $j,q\in J$. Let $P_i=\{D_i\}$ for $i=1,2$, let $Q=\{D_1,D_2\}$  and let $h:\F^Q\to\gA$ be such that $h(D_1)=a$ and $h(D_2)=b$. Then $\sim^{P_i}\subseteq\ker(h)$ for $i=1,2$, but $\sim^Q$ is not a subset of $\ker(h)$ by $a\ne b$, showing that Theorem~\ref{thm:4b}\eqref{4bsepa} fails.	
	\end{proof}

\begin{figure}[ht!]\begin{center}
\begin{tikzpicture}
        \node (D) at (0,1) {$D$};
        \node (F) at (0,0) {$F$};
        \node (T) at (0,-1) {$T$};
        
        \node (A) at (2,1) {$a$};
        \node (B) at (2,-1) {$b$};
        
        \path[arrow] (D) edge (A); 
        \path[arrow] (F) edge (B); 
        \path[arrow] (T) edge (B); 

        \path[arrow] (F) edge[bend right] (T); 
        \path[arrow] (T) edge[bend right] (F); 
        \path[arrow] (D) edge[loop,out=220,in=150,looseness=3] (D); 

        \path[arrow] (A) edge[loop,out=30,in=320,looseness=3] (A); 
        \path[arrow] (B) edge[loop,out=30,in=320,looseness=3] (B); 
  
        \draw [rounded corners=4] (-.7,-1.7) rectangle ++(1.4,3.4);
        \draw (0,2.2) node {$\gB$};
        \draw [rounded corners=4] (-.7+2,-1.7) rectangle ++(1.4,3.4);
        \draw (2,2.2) node {$\gA$};

        \draw [rounded corners=4] (4,0.2) rectangle ++(4.4,1.5);
        \draw (6,2.2) node {$\Fr(\{\gB\}, P(I,J))$};

        \node (Ni) at (5,1.5) {$N_i$};
        \node (N00) at (4.15,1.2) {$\cdot$};
        \node (N01) at (4.25,1.2) {$\cdot$};
        \node (N10) at (4.15,0.3) {$\cdot$};
        \node (N11) at (4.25,0.3) {$\cdot$};
        
        \node (Na1) at (4.5,1.2) {$\cdot$};
        \node (Nb1) at (4.5,0.3) {$\cdot$};
        \path[arrow] (Na1) edge[bend right] (Nb1);
        \path[arrow] (Nb1) edge[bend right] (Na1); 

        \node (Na2) at (4.5+.5,1.2) {$\cdot$};
        \node (Nb2) at (4.5+.5,0.3) {$\cdot$};
        \path[arrow] (Na2) edge[bend right] (Nb2);
        \path[arrow] (Nb2) edge[bend right] (Na2); 

        \node (Na3) at (4.5+.5+.5,1.2) {$\cdot$};
        \node (Nb3) at (4.5+.5+.5,0.3) {$\cdot$};
        \path[arrow] (Na3) edge[bend right] (Nb3);
        \path[arrow] (Nb3) edge[bend right] (Na3);

        \node (N00b) at (4.15+.5+.5+.6,1.2) {$\cdot$};
        \node (N01b) at (4.25+.5+.5+.6,1.2) {$\cdot$};
        \node (N10b) at (4.15+.5+.5+.6,0.3) {$\cdot$};
        \node (N11b) at (4.25+.5+.5+.6,0.3) {$\cdot$};

        \node (Di) at (7,1.5) {$D_j$};
        \node (D00) at (4.15+2.2,1.2) {$\cdot$};
        \node (D01) at (4.25+2.2,1.2) {$\cdot$};
        \node (D10) at (4.15+2.2,0.3) {$\cdot$};
        \node (D11) at (4.25+2.2,0.3) {$\cdot$};
        
        \node (Da1) at (4.5+2.2,1.2) {$\cdot$};
        \node (Db1) at (4.5+2.2,0.3) {$\cdot$};
        \path[arrow] (Da1) edge[bend right] (Db1);
        \path[arrow] (Db1) edge[bend right] (Da1); 

        \node (Da2) at (4.5+.5+2.2,1.2) {$\cdot$};
        \node (Db2) at (4.5+.5+2.2,0.3) {$\cdot$};
        \path[arrow] (Da2) edge[bend right] (Db2);
        \path[arrow] (Db2) edge[bend right] (Da2); 

        \node (Da3) at (4.5+.5+.5+2.2,1.2) {$\cdot$};
        \node (Db3) at (4.5+.5+.5+2.2,0.3) {$\cdot$};
        \path[arrow] (Da3) edge[bend right] (Db3);
        \path[arrow] (Db3) edge[bend right] (Da3);

        \node (D00b) at (4.15+.5+.5+.6+2.2,1.2) {$\cdot$};
        \node (D01b) at (4.25+.5+.5+.6+2.2,1.2) {$\cdot$};
        \node (D10b) at (4.15+.5+.5+.6+2.2,0.3) {$\cdot$};
        \node (D11b) at (4.25+.5+.5+.6+2.2,0.3) {$\cdot$};

        \draw [rounded corners=4] (4,-0.2) rectangle ++(4.4,-1.5);
        \draw (6.3,-2.2) node {$\Fr(\{\gB\}, P(I,J), S(I,J))$};

        \node (FNi) at (5,1.5-1.9) {$N_i$};
        \node (FN00) at (4.15,1.2-1.9) {$\cdot$};
        \node (FN01) at (4.25,1.2-1.9) {$\cdot$};
        \node (FN10) at (4.15,0.3-1.9) {$\cdot$};
        \node (FN11) at (4.25,0.3-1.9) {$\cdot$};
        
        \node (FNa1) at (4.5,1.2-1.9) {$\cdot$};
        \node (FNb1) at (4.5,0.3-1.9) {$\cdot$};
        \path[arrow] (FNa1) edge[bend right] (FNb1);
        \path[arrow] (FNb1) edge[bend right] (FNa1); 

        \node (FNa2) at (4.5+.5,1.2-1.9) {$\cdot$};
        \node (FNb2) at (4.5+.5,0.3-1.9) {$\cdot$};
        \path[arrow] (FNa2) edge[bend right] (FNb2);
        \path[arrow] (FNb2) edge[bend right] (FNa2); 

        \node (FNa3) at (4.5+.5+.5,1.2-1.9) {$\cdot$};
        \node (FNb3) at (4.5+.5+.5,0.3-1.9) {$\cdot$};
        \path[arrow] (FNa3) edge[bend right] (FNb3);
        \path[arrow] (FNb3) edge[bend right] (FNa3);

        \node (FN00b) at (4.15+.5+.5+.6,1.2-1.9) {$\cdot$};
        \node (FN01b) at (4.25+.5+.5+.6,1.2-1.9) {$\cdot$};
        \node (FN10b) at (4.15+.5+.5+.6,0.3-1.9) {$\cdot$};
        \node (FN11b) at (4.25+.5+.5+.6,0.3-1.9) {$\cdot$};

        \node (FDi) at (7,1.5-2) {$D_j=D_q$};
        \node (FDb1) at (4.5+2.2+.4,0.3-1.35) {$\cdot$};
        \path[arrow] (FDb1) edge[loop,out=320,in=240,looseness=4] (FDb1);
    \end{tikzpicture}
    \caption{Illustration of the algebras $\gA$, $\gB$, and the free and conditionally free algebras $\Fr(\{\gB\}, P(I,J))$, and $\Fr(\{\gB\}, P(I,J), S(I,J))$.}
        \label{example-fig}

\end{center}\end{figure}

\subsection{Condition (4) in conditionally substitutional pre\--fa\-mi\-li\-es}\label{subs:cs4}
In this subsection, we discuss (4) in the special case when $\LL$ is conditionally substitutional. 
We show close connections of (4) with the defining relations of the logics and with the patchwork property of models. 
We close with showing existence and non-existence of logic families containing given logics.

The intuition behind condition (4) is expressed quite nicely in a conditionally substitutional pre-family of logics. Recall from Claim \ref{claim:subschar}(iii) that $\Mng_{\cL}$ in a conditionally substitutional logic can be specified by a condition $S$. Theorem \ref{csub4} states that (4) is equivalent to saying that the logic belonging to the union of the signatures is specified by the union of these conditions. 
Note that Theorem \ref{csub4}\eqref{csub4i} is just condition (4).

\begin{theorem}[defining relations and (4)]\label{csub4} 	Let $\LL=\<\cL^P:\;P\in \Sig\>$ be a conditionally substitutional pre-family of logics.
	Let $P_i\in \Sig$ be sets 
	for $i\in I$ such that the union $P=\bigcup_{i\in I}P_i$ belongs to $\Sig$.
	The following conditions are equivalent.
\begin{enumerate}[(i)]\itemsep-2pt
\item\label{csub4i}
$\F^P\slash\!\sim^P\  =\	\Fr(\Alg_m(\cL^P),\; P,\; \cup\!\sim^{P_i})$.
\item \label{4si}
$\F^P\slash\!\sim^P\  =\  \Fr(\Alg_m(\cL^P),\; P,\; \cup S_i)$ whenever $\F^{P_i}\slash\!\sim^{P_i} = \Fr(\Alg_m(\cL^{P_i}),\; P_i,\; S_i)$ for all $i\in I$.
\end{enumerate}
\end{theorem}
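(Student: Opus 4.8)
The plan is to translate (i) and (ii) into statements about kernels of homomorphisms into $\Alg_m(\cL^P)$ and to reduce everything to a single transfer step. First, two observations. By Theorem~\ref{fmalg}(i) the equality $\F^{P_i}\slash\!\sim^{P_i}=\Fr(\Alg_m(\cL^{P_i}),P_i,\sim^{P_i})$ always holds, so $S_i=\sim^{P_i}$ is an admissible choice of defining relations in (ii); feeding it into (ii) returns exactly (i), which disposes of (ii)$\Rightarrow$(i). Second, any admissible family $(S_i)$ satisfies $\sim^{P_i}=\Cr(\Alg_m(\cL^{P_i}),P_i,S_i)\supseteq S_i$, so, by Proposition~\ref{prop:reduct}(i) and Definition~\ref{def:generallogic}(3), $\bigcup_iS_i\subseteq\bigcup_i\sim^{P_i}\subseteq\sim^P$.

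For (i)$\Rightarrow$(ii) fix an admissible $(S_i)_{i\in I}$. By Theorem~\ref{fmalg}(i), $\F^P\slash\!\sim^P=\Fr(\Alg_m(\cL^P),P,\sim^P)$, so the assertion of (ii) is the single equality $\Cr(\Alg_m(\cL^P),P,\bigcup_iS_i)=\sim^P$. Its inclusion $\subseteq$ is immediate from $\bigcup_iS_i\subseteq\sim^P$ by monotonicity of $\Cr$. For $\supseteq$ I would take $h\in\Hom(\F^P,\Alg_m(\cL^P))$ with $\bigcup_iS_i\subseteq\ker h$ and show $\sim^P\subseteq\ker h$; since (i) is equivalent to Theorem~\ref{gen4}\eqref{4homa}, it suffices to show $\bigcup_i\sim^{P_i}\subseteq\ker h$, i.e.\ that for each $i$ the restriction $h_i:=h\!\upharpoonright\!\F^{P_i}$ satisfies
\[ S_i\subseteq\ker h_i\ \Longrightarrow\ \sim^{P_i}\subseteq\ker h_i. \]
For an arbitrary $h_i\colon\F^{P_i}\to\gA\in\Alg_m(\cL^P)$ this implication is the crux, and it is precisely the statement $\Cr(\Alg_m(\cL^P),P_i,S_i)=\sim^{P_i}$: that $S_i$ defines $\cL^{P_i}$ not only over $\Alg_m(\cL^{P_i})$ but also over the larger class $\Alg_m(\cL^P)$.

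One inclusion of the crux is formal. As $\cL^{P_i}$ is a reduct of $\cL^P$ (Definition~\ref{def:generallogic}(3)), every member of $\Alg_m(\cL^{P_i})$ is a subalgebra of a member of $\Alg_m(\cL^P)$, so $\Alg_m(\cL^{P_i})\subseteq\SP\Alg_m(\cL^P)$; using $\Cr(\K,X,S)=\Cr(\SP\K,X,S)$ gives $\Cr(\Alg_m(\cL^P),P_i,S_i)=\Cr(\SP\Alg_m(\cL^P),P_i,S_i)\subseteq\Cr(\Alg_m(\cL^{P_i}),P_i,S_i)=\sim^{P_i}$. For the reverse inclusion it is enough to prove that the image $\gA_i:=h_i(\F^{P_i})$ lies in $\SP\Alg_m(\cL^{P_i})$, for then $\sim^{P_i}=\Cr(\SP\Alg_m(\cL^{P_i}),P_i,S_i)\subseteq\ker h_i$. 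Thus everything reduces to realizing the $\Alg_m(\cL^P)$-valued, $S_i$-respecting homomorphism $h_i$ by $\Alg_m(\cL^{P_i})$-valued ones.

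To produce such a realization I would invoke the conditional substitution property. Write $\gA=\mng^P_\gM(\F^P)$ and choose $\tau_p\in F^P$ with $h_i(p)=\mng^P_\gM(\tau_p)$. By Definition~\ref{def:generallogic}(5), take a disjoint isomorphic copy $R$ of $P_i$ with $P\cup R\in\Sig$, let $\theta\colon\F^{P\cup R}\to\F^{P\cup R}$ fix $P$ pointwise and send the copy of each $p$ to $\tau_p$, and let $\gM^\dagger\in M^{P\cup R}$ be any model restricting to $\gM$. It is routine that $\mng^{P\cup R}_{\gM^\dagger}\circ\theta$ respects $\sim^P$, the copy $S_R$ of $S_i$, and the equational theory $\si(\sim^{P\cup R})$ of $\Alg_m(\cL^{P\cup R})$. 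Once one knows $\sim^{P\cup R}\subseteq\ker(\mng^{P\cup R}_{\gM^\dagger}\circ\theta)$, Definition~\ref{def:condsubstit} applied to $\cL^{P\cup R}$ yields a model whose reduct to $\cL^{R}\cong\cL^{P_i}$ has meaning function equal to $h_i$ (transported along the isomorphism), so $\gA_i\in\Alg_m(\cL^{P_i})$ and the crux follows. The step I expect to be the main obstacle is exactly this last hypothesis: that the definitional substitution $\theta$ carries $\sim^{P\cup R}$ into $\sim^P$, i.e.\ that replacing the fresh atomic formulas by the $\tau_p$ preserves the tautological congruence. This is the abstract counterpart of the familiar fact that in first-order logic a relation symbol may be replaced by any formula with matching free variables without disturbing validity, and it is here that the defining-relation property of $S_i$ together with conditional substitutionality must be used.
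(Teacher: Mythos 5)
Your skeleton is essentially the paper's, and the parts you complete are done correctly: the direction (ii)$\Rightarrow$(i) via the admissible choice $S_i=\;\sim^{P_i}$ (Theorem \ref{fmalg}(i)) is right, and is in fact cleaner than the paper's own argument for that direction; the translation of (ii) into the single kernel statement $\Cr(\Alg_m(\cL^P),P,\cup_iS_i)=\;\sim^P$, the easy inclusion, and the reduction of the hard inclusion via Theorem \ref{gen4}\eqref{4homa} are also fine (in the paper's notation you have rediscovered its auxiliary statements (B) and (C)). But the proof stops exactly where you say it might: the ``crux'' --- that for the restriction $h_i=h\upharpoonright\F^{P_i}$ of a homomorphism $h:\F^P\to\gA\in\Alg_m(\cL^P)$ one has $S_i\subseteq\ker h_i\Rightarrow\;\sim^{P_i}\subseteq\ker h_i$ --- is never established. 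Your proposed repair is moreover circular: to invoke Definition \ref{def:condsubstit} for $\cL^{P\cup R}$ you must first know $\sim^{P\cup R}\subseteq\ker(\mng^{P\cup R}_{\gM^\dagger}\circ\theta)$, but $\sim^{P\cup R}$ contains the copy $\sim^{R}$ of $\sim^{P_i}$, and verifying $\sim^{R}\subseteq\ker(\mng^{P\cup R}_{\gM^\dagger}\circ\theta)$ is just a renaming of the crux itself; nor can you appeal to any condition-(4)-type statement for the decomposition $\{P,R\}$, since hypothesis (i) is assumed only for the fixed decomposition $P=\bigcup_{i\in I}P_i$.

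The gap is not a missing trick: the crux is actually false under the stated hypotheses, so no completion of your argument exists. Since your reduction to the crux is faithful (any failure of the crux extends to a failure of (ii)), this also shows that the paper's own proof is thin at the very same spot --- it crosses the bridge silently by writing $h_i=h\upharpoonright F^{P_i}:\F^{P_i}\to\Alg_m(\cL^{P_i})$ and then applying its (B), although a restriction of an $\Alg_m(\cL^P)$-valued homomorphism need not land in $\SP\Alg_m(\cL^{P_i})$. A counterexample sits inside the paper: take the dialectic pre-family of Theorem \ref{example} with $I,J\neq\emptyset$, $P_1=P(I,\emptyset)$, $P_2=P(\emptyset,J)$, $P=P(I,J)$, and $S_1=\;\sim^{P_1}$, $S_2=\emptyset$. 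Every meaning function of $\cL^{P_2}$ is constantly $D$, so $\Alg_m(\cL^{P_2})$ consists of the one-element algebra $\{D\}$ and $\Fr(\Alg_m(\cL^{P_2}),P_2,\emptyset)=\F^{P_2}\slash\!\sim^{P_2}$; hence $S_2=\emptyset$ is admissible in the sense of (ii). Condition (i) holds for this decomposition because $\LL$ satisfies (4) (proved in Theorem \ref{example}). Yet the homomorphism $h:\F^P\to\gB\in\Alg_m(\cL^P)$ sending every atomic formula to $T$ satisfies $S_1\cup S_2\subseteq\ker h$ while $\<D_j,\lnot D_j\>\in\;\sim^P\setminus\ker h$, so the conclusion of (ii) fails. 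So your instinct about where the difficulty lies was exactly right, but the statement you reduced to cannot be proved unless the admissibility requirement on the $S_i$ is strengthened (for instance to $\Mng^{P_i}=\Hom(\F^{P_i},\Alg_m(\cL^P),S_i)$, or to defining relations taken over $\Alg_m(\LL)$); as written, the proposal does not prove the theorem.
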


\begin{proof}    \def\ACSA{\quad\Leftrightarrow\quad}
    We start with the general observation that for 
    any class $\K$ of similar algebras, set $P$ and
    binary relations $R,S\subseteq F^P\times F^P$, 
    \begin{align*}
        \Fr(\K, P, R) = \Fr(\K, P, S) &\ACSA
        \Cr(\K,P,R)=\Cr(\K,P,S) \\ &\ACSA
        \Hom(\K,P,R)=\Hom(\K,P,S) \\ &\ACSA 
         \big(\forall h\in \Hom(\F^P,\K)\big)\big(
                R\subseteq\ker(h) \ \Leftrightarrow\  S\subseteq\ker(h)\big)
    \end{align*}
    Consider now the following three statements.
    \begin{enumerate}[(A)]\itemsep-2pt
        \item $(\forall h:\F^P\to\Alg_m(\cL^P))\ 
                \big(\sim^{P}\subseteq\ker(h) 
                \ \Leftrightarrow \ 
                \cup\sim^{P_i}\subseteq\ker(h)\big)$.
        \item $(\forall h:\F^{P_i}\to\Alg_m(\cL^{P_i}))\ 
                \big(\sim^{P_i}\subseteq\ker(h) 
                \ \Leftrightarrow \ 
                S_i\subseteq\ker(h)\big)$\ \  for all $i\in I$.
        \item $(\forall h:\F^P\to\Alg_m(\cL^P))\ 
                \big(\sim^{P}\subseteq\ker(h) 
                \ \Leftrightarrow \ 
                \cup S_i\subseteq\ker(h)\big)$.                
    \end{enumerate}
    Then (i) is equivalent to (A); and (ii) is equivalent to (B)$\Rightarrow$(C). Take any $h:\F^P\to\Alg_m(\cL^P)$, and
    for $i\in I$, write $h_i = h\upharpoonright F^{P_i}:\F^{P_i}\to\Alg_m(\cL^{P_i})$. \\

    Proof of (i)$\Rightarrow$(ii): Suppose (A) and (B) hold. Then
    \begin{align*}
        \sim^P\subseteq\ker(h) 
            \quad&\overset{(A)}{\Longleftrightarrow}\quad
        \cup\sim^{P_i}\subseteq\ker(h)  
            \quad\overset{}{\Longleftrightarrow}\quad
        (\forall i\in I)\ \sim^{P_i}\subseteq \ker(h_i) \\
            &\overset{(B)}{\Longleftrightarrow}\quad
        (\forall i\in I)\ S_i\subseteq \ker(h_i) 
            \quad\overset{}{\Longleftrightarrow}\quad
        \cup S_i \subseteq  \ker(h),
    \end{align*}
    therefore (C) holds. In this direction we did not use 
    that $\LL$ is conditionally substitutional. \\

    Proof of (ii)$\Rightarrow$(i): By Claim \ref{claim:subschar}(iii),
    for each $P_i$, $\Mng^{P_i}$ can be specified by a condition $S_i$, 
    that is, $\Mng^{P_i} = \Hom(\F^{P_i},\Alg_m(\cL^{P_i}), S_i)$. By the
    same claim, $\Mng^{P_i} = \Hom(\F^{P_i},\Alg_m(\cL^{P_i}), \sim^{P_i})$,
    consequently, (B) holds for the $S_i$'s. As we assumed (ii) and (B) holds
    it follows that (C) holds as well. Then
        \begin{align*}
        \sim^P\subseteq\ker(h) 
            \quad&\overset{(C)}{\Longleftrightarrow}\quad
        \cup S_i\subseteq\ker(h)  
            \quad\overset{}{\Longleftrightarrow}\quad
        (\forall i\in I)\ S_i\subseteq \ker(h_i) \\
            &\overset{(B)}{\Longleftrightarrow}\quad
        (\forall i\in I)\ \sim^{P_i}\subseteq \ker(h_i) 
            \quad\overset{}{\Longleftrightarrow}\quad
        \cup \sim^{P_i} \subseteq  \ker(h),
    \end{align*}
    therefore (A) holds.
\end{proof}

We turn to connections with the quite intuitive patchwork property of models, see, e.g., \cite[Def. 4.3.14]{AGyNS2022}.

\begin{definition}[patchwork property]\label{def:patchwork}
	Let $\LL = \<\cL^P:\;P\in \Sig\>$ be a class of logics. $\LL$ has the 
	\emph{patchwork property of models} if the following holds.
	Let $P_i\in \Sig$ for $i\in I$ be such that $P=\cup_iP_i\in \Sig$.
	Let $\gM_i\in M^{P_i}$ be models such that for any $i,j\in I$
	whenever $F^{P_i\cap P_j}\neq \emptyset$, then
	\begin{equation}
		\mng_{\gM_i}^{P_i}\upharpoonright F^{P_i\cap P_j} = 
		\mng_{\gM_j}^{P_j}\upharpoonright F^{P_i\cap P_j}. \label{eq:azonosak}
	\end{equation}
	Then there is $\gM\in M^{P}$ such that
	\begin{equation}
		\mng_{\gM}^P \upharpoonright F^{P_i}\;=\;\mng_{\gM_i}^{P_i} \label{eq:azonos}
	\end{equation}
	for all $i\in I$.\endef
\end{definition}

The next theorem says that in conditionally substitutional pre-families of logics, the patchwork property implies (4) 
but the patchwork property and (4b) remain independent of each other. These facts suggest that it may be better to have (4) in the definition of a logic family than (4b).

\begin{theorem}[the patchwork property and (4)]\label{thm:pwp2}
	Suppose $\LL = \<\cL^P:\;P\in \Sig\>$ is a conditionally substitutional pre-family of logics. Then (i)-\eqref{amalg4b}  below hold.
	\begin{enumerate}[(i)]\itemsep-2pt
	\item	$\LL$ satisfies (4) if it has the patchwork property of models. 
	\item  (4b) is not implied by the patchwork property. 
	\item \label{amalg4b} (4b) does not imply the patchwork property.
	\end{enumerate}
\end{theorem}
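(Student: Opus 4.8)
The plan is to treat the three parts by quite different means: part (i) by the algebraic characterization of (4) together with the patchwork hypothesis, and parts (ii), (iii) by producing explicit witnessing pre-families. For (i), I would use Theorem \ref{gen4}\eqref{4homa}, so that it suffices to fix $h\in\Hom(\F^P,\Alg_m(\cL^P))$ with $\cup_{i}\sim^{P_i}\subseteq\ker(h)$ and deduce $\sim^P\subseteq\ker(h)$. Choose $\gM\in M^P$ with $h\colon\F^P\to\gA:=\mng_\gM(\F^P)$. Since $\cL^P$ is conditionally substitutional, Claim \ref{claim:subschar}(ii) turns the goal $\sim^P\subseteq\ker(h)$ into the statement that $h$ is a meaning function, i.e.\ $h=\mng_\gN$ for some $\gN\in M^P$. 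I would build $\gN$ by patchwork: put $h_i:=h\upharpoonright F^{P_i}$, note $\sim^{P_i}\subseteq\ker(h_i)$ by Proposition \ref{prop:reduct}(i) and the hypothesis, and realize each $h_i$ as $\mng_{\gM_i}$ for some $\gM_i\in M^{P_i}$. The models $\gM_i$ are compatible in the sense of Definition \ref{def:patchwork}, because $\mng_{\gM_i}$ and $\mng_{\gM_j}$ both restrict to $h\upharpoonright F^{P_i\cap P_j}$; the patchwork property then yields $\gN\in M^P$ with $\mng_\gN\upharpoonright F^{P_i}=h_i$ for every $i$. As $P=\cup_i P_i$ generates $\F^P$, the homomorphisms $\mng_\gN$ and $h$ agree on all atoms, so $\mng_\gN=h$, as required.

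The main obstacle is the realization of each $h_i$ by a model $\gM_i\in M^{P_i}$. By Claim \ref{claim:subschar}(ii) for $\cL^{P_i}$, and because $\sim^{P_i}\subseteq\ker(h_i)$, this reduces to showing that $h_i$ maps into a member of $\Alg_m(\cL^{P_i})$; equivalently, that the subalgebra $h_i(\F^{P_i})\le\gA$ is genuinely a concept algebra of some $P_i$-model, rather than merely lying in $\HSP\,\Alg_m(\cL^{P_i})$. This is exactly where the gap between (4) and (4b) sits: the target $\gA$ is here a true concept algebra, an element of $\Alg_m(\cL^P)$, whereas the corresponding step for (4b) would only give a target in $\HSP\,\Alg_m(\cL^P)$, for which the realization breaks down. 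The guiding intuition, explicit in $\FOL$, is that $\sim^{P_i}\subseteq\ker(h_i)$ forces every value $h_i(p)$ to have the shape of an atomic meaning inside $\gA$ (compare the defining relations $D(t)$ and the regularity argument in Theorem \ref{thm:FOLconsubs}), so that a bona fide $P_i$-model with these atomic meanings can be found inside $\gA$. I expect verifying this realization, from conditional substitutionality of $\cL^{P_i}$ and the concreteness of $\gA$ as a concept algebra, to be the technical heart of (i).

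For (ii), I would reuse the ``dialectic'' pre-family $\LL=\langle\cL(I,J):P(I,J)\in\Sig\rangle$ from the proof of Theorem \ref{example}, which is conditionally substitutional and satisfies (4) but not (4b); it then remains only to check that $\LL$ has the patchwork property. Its models are the homomorphisms $f\colon\F(I,J)\to\gB$ with $f(D_j)=D$ for all $j$, i.e.\ arbitrary truth-value assignments to the $N_i$ together with the forced value $D$ on the $D_j$. Given compatible models $\gM_k\in M^{P(I_k,J_k)}$ for a decomposition $P=\cup_k P(I_k,J_k)$, the atom-by-atom assignment $\gM(a)=\gM_k(a)$ for $a\in P(I_k,J_k)$ is well defined by compatibility on overlaps, is again a legitimate model (it still sends every $D_j$ to $D$), and restricts to each $\gM_k$. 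Hence $\LL$ has the patchwork property while failing (4b), which proves (ii).

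For (iii), I would exhibit a substitutional pre-family — hence, by Corollary \ref{cor:subs}, automatically one satisfying (4b) — whose patchwork property fails. Take a single unary connective $f$ and no constants, let $\Sig$ be all sets of atoms, and, using Claim \ref{subsequi}(iii), define the logics by $\Mng^P=\Hom(\F^P,\{\gA_1,\gA_2\})$, where $\gA_1$ is the one-element algebra (so $f$ has a fixed point) and $\gA_2$ is two-element with $f$ the fixed-point-free swap. This is a substitutional pre-family, so (4b) holds. Since there are no constants, for disjoint $P_1,P_2$ the overlap $F^{P_1\cap P_2}=F^{\emptyset}$ is empty, so the compatibility hypothesis of Definition \ref{def:patchwork} is vacuous, and patchwork would force every pair $\gM_1\in M^{P_1}$, $\gM_2\in M^{P_2}$ to possess a common extension in $M^{P_1\cup P_2}$. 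Choosing $\gM_1$ into $\gA_1$ and $\gM_2$ mapping nontrivially into $\gA_2$, any common extension would be a single homomorphism into one of $\gA_1,\gA_2$; but $\gM_1$ forces an element fixed by $f$ (present only in $\gA_1$) while $\gM_2$ forces two distinct elements (possible only in $\gA_2$), so neither choice works. Thus no common extension exists, patchwork fails, and (iii) follows.
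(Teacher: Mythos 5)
Parts (ii) and (iii) of your proposal are correct and essentially coincide with the paper's own proof. For (ii) the paper likewise reuses the family of Theorem \ref{example}, merely asserting that it has the patchwork property; your atom-by-atom gluing argument is exactly the verification the paper leaves to the reader. For (iii) the paper also produces a substitutional pre-family (so that (4b) holds by Corollary \ref{cor:subs}) whose meaning functions are $\Hom(\F^P,\K)$ for a two-element class $\K$ whose members cannot be glued: it takes the $\cap$-reducts $\gB_1=\<\{0\},\cap\>$ and $\gB_2=\<\{0,1\},\cap\>$ and signatures $A_1,A_2$ overlapping in $A_0$, phrased via failure of the amalgamation property, while you take the one-element algebra and the two-element fixed-point-free algebra with disjoint signatures and vacuous compatibility. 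Both arguments work (in each case one completes the construction by setting $M^P=\Mng^P$ and letting validity hold always, as the paper does); yours is marginally more self-contained since it does not route through amalgamation.

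Part (i) contains a genuine gap, and you say so yourself. Your skeleton is exactly the paper's: check (4) via Theorem \ref{gen4}\eqref{4homa}; restrict $h$ to $h_i=h\upharpoonright F^{P_i}$; realize each $h_i$ as $\mng_{\gM_i}$ for some $\gM_i\in M^{P_i}$; glue the $\gM_i$ by the patchwork property into $\gM\in M^P$; conclude $\mng_{\gM}=h$ and hence $\sim^P\subseteq\ker(h)$. The paper carries out the realization step in one line, by the conditional substitution property of $\cL^{P_i}$, i.e., by Claim \ref{claim:subschar}: a homomorphism whose kernel contains $\sim^{P_i}$ and which lands in a concept algebra is a meaning function. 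You instead stop there, declare the realization to be ``the technical heart of (i)'', and propose to obtain it later from ``the concreteness of $\gA$ as a concept algebra'' in the manner of the regularity argument of Theorem \ref{thm:FOLconsubs}. That plan cannot be executed in the present generality: for an abstract logic the concept algebra $\gA$ is just some algebra, with no notion of regular elements and no defining relations $D(t)$, so the $\FOL$-specific construction has no analogue to invoke. To your credit, you have isolated precisely the point where the paper's appeal to Claim \ref{claim:subschar} must bear weight (the claim, read literally, asks the homomorphism to land in a member of $\Alg_m(\cL^{P_i})$, whereas $h_i$ is only known to land in $\gA\in\Alg_m(\cL^P)$), and this is also exactly what separates (4) from (4b), as you correctly observe. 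But identifying the crux is not resolving it: having raised the objection, you must either discharge it abstractly or accept the paper's direct application of conditional substitutionality; you do neither, so your proof of (i) is an outline with an unproven statement at its center, and the route you sketch for proving that statement does not exist in the abstract setting.
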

\begin{proof} Proof of (i): 
	Assume the patchwork property, we show (4) by checking the equivalent condition Theorem \ref{gen4}\eqref{4homaa}. Take signatures
	$P_i\in \Sig$ for $i\in I$ such that $P = \cup_iP_i\in \Sig$. Take any homomorphism
	$h:\F^P\to\gA$ for some $\gA\in\Alg_m(\cL^P)$ such 
	that $\sim^{P_i}\subseteq\ker(h)$ holds for $i\in I$.
	By the conditional substitution property, the homomorphisms $h\upharpoonright F^{P_i}$
	are meaning homomorphisms $\mng_{\gM_i}^{P_i}$ for some models $\gM_i\in M^{P_i}$.
	Clearly, \eqref{eq:azonosak} holds for $i,j\in I$. Therefore, by the 
	patchwork property of models, there is $\gM\in M^P$ that fulfills 
	\eqref{eq:azonos}. But then $h=\mng_{\gM}^P$, implying $\sim^P\subseteq\ker(h)$.
	
	Proof of (ii): The conditionally substitutional pre-family of
    logics $\LL=\<\cL(I,J): P(I,J)\in\Sig\>$ from Theorem \ref{example}
    does not satisfy (4b) but has the patchwork property of models.

    Proof of (iii): As substitutionality implies (4b) by Corollary \ref{cor:subs}, it is enough to construct a substitutional 
    pre-family of logics $\LL$ that lacks the patchwork property. 

    Let $\K$ be a class of similar algebras that does not have
    the amalgamation property.\footnote{For the definition of the amalgamation property, see e.g. \cite[Def. 4.4.17]{AGyNS2022}. } 
	For example, let $\gB_1 = \<\{0\}, \cap\>$ and $\gB_2=\<\{0,1\},\cap\>$ be the 
	$\cap$-reducts of
	the one and two element Boolean set algebras respectively, and take $K=\{\gB_1, \gB_2\}$.
	The V-formation in Figure \ref{fig:V} cannot be completed to an amalgamation diagram in $K$, 
	showing that $K$ fails to have the amalgamation property.
	\begin{figure}[ht!]
	\begin{center}
	\begin{tikzpicture}[-,>=stealth',shorten >=1pt,auto,node distance=2cm,
	                main node/.style={}]

		\node (A) at (0,0) {};
		\draw [fill] (0,0) circle [radius=.05];
		\draw (0,0) circle [radius=.09];
		
		\node (B1) at (1,1-0.2) {};
		\draw (1,0.7-0.3) node {} -- (1,1.3-0.3) node {};
		\draw [fill] (1,0.7-0.3) circle [radius=.05];
		\draw [fill] (1,1.3-0.3) circle [radius=.05];
		\draw (1,1.3-0.3) circle [radius=.09];
		
		\node (B2) at (1,-1+0.2) {};
		\draw (1,-0.7+0.3) node {} -- (1,-1.3+0.3) node {};
		\draw [fill] (1,-0.7+0.3) circle [radius=.05];
		\draw [fill] (1,-1.3+0.3) circle [radius=.05];
		\draw (1,-1.3+0.3) circle [radius=.09];
		\path (A) edge[->] (B1)
			  (A) edge[->] (B2);
		\node (At) at  (-0.5,0) {$\gB_1$};
		\node (B1t) at (1.5,1-0.3) {$\gB_2$};
		\node (B2t) at (1.5,-1+0.3) {$\gB_2$};
	\end{tikzpicture}
	\end{center}
	\caption{The embeddings of $\gB_1$ into $\gB_2$ showing the failure of the 
	amalgamation property in $K$.}
	\label{fig:V}
	\end{figure}
	
	Let $\Cn$ be the similarity type
    of the algebras in $\K$, and $\Sig$ be the class of all sets
    $P$ not containing proper $\Cn$-type terms. For $P\in\Sig$
    we let $\Mng^P=\Hom(\F^P,\K)$. The class of models $M^P$
    is defined to be $\Mng^P$, and the validity relation is defined 
    so that $\gM\models\varphi$ always holds. This way we defined
    the logic $\cL^P$. Let $\LL = \<\cL^P:\; P\in\Sig\>$.
    That $\LL$ is a pre-family of logics is straightforward. By Claim
    \ref{subsequi}, $\LL$ is substitutional, and thus it is conditionally 
    substitutional, as well. It remained to check that $\LL$ does not
    have the patchwork property. For, take $\gA_0, \gA_1, \gA_3\in \K$
    such that $\gA_0\subseteq \gA_1$, $\gA_0\subseteq\gA_2$, but
    there is no $\gA\in\K$ and embeddings $e_i:\gA_i\to\gA$ ($i=1,2$)
    with $e_1(\gA_0)=e_2(\gA_0)$. Such algebras exist because $\K$
    lacks the amalgamation property. Now, $A_1, A_2\in\Sig$ 
    and $A_1\cup A_2\in\Sig$ as well. For $i=1,2$, let 
    $m_i:\F^{A_i}\to \gA_i$ be the homomorphic extension of the identity mapping $A_i\to A_i$. Each $m_i$ is a meaning function, and
	\begin{align}
		m_1\upharpoonright F^{A_1\cap A_2} = 
        m_2\upharpoonright F^{A_1\cap A_2},
	\end{align}
    because $A_1\cap A_2=A_0$. On the other hand, there cannot
    exist $m:\F^{A_1\cup A_2}\to\K$ with
    \[
        m\upharpoonright \F^{A_1} = m_1, \text{ and }
        m\upharpoonright \F^{A_2} = m_2,
    \]
    because in this case $\gA = m(\F^{A_1\cup A_2})$ would be
    an amalgam in $\K$.
\end{proof}


The next theorem says that in conditionally substitutional families of logic, the tautological formula algebras determine the logics themselves, up to meaning-isomorphisms. 
The proof of this theorem uses condition (4) quite inherently. 

\begin{theorem}[renaming atomic formulas and (4)]\label{3-theorem}
	Assume that $\LL=\langle\cL(P) : P\in\Sig\rangle$ is a conditionally substitutional family of logics. Then $\LL$ satisfies (6a), i.e., any bijection $b:P\to Q$ that extends to a bijection between the tautological formula algebras of $\cL^P$ and $\cL^Q$ induces a meaning-isomorphism between $\cL^P$ and $\cL^Q$, whenever $P,Q\in\Sig$.
\end{theorem}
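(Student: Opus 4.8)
The plan is to reduce, using condition (5), to the case where $P$ and $Q$ are disjoint with $R:=P\cup Q\in\Sig$, and then to exploit the free-product structure that condition (4) forces on $\F^R\slash\!\sim^R$. Write $\hat b:\F^P\to\F^Q$ for the formula-algebra isomorphism extending $b$; the hypothesis is precisely that $\hat b$ carries $\sim^P$ onto $\sim^Q$, so that $\tilde b:\F^P\slash\!\sim^P\to\F^Q\slash\!\sim^Q$, sending $\phi\slash\!\sim^P$ to $\hat b(\phi)\slash\!\sim^Q$, is a well-defined isomorphism. The target is $\Mng^P=\{g\circ\hat b:\ g\in\Mng^Q\}$. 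Since a logic isomorphism $f^F$ yields $\mng^P_\gM=\mng^{P'}_{f^M(\gM)}\circ f^F$ by Definition \ref{def:isom}(c), and hence $\Mng^P=\{m\circ f^F:m\in\Mng^{P'}\}$, I may apply Definition \ref{def:generallogic}(5)(b) to the pair $P,Q$ to replace them by disjoint isomorphic copies $P',Q'$ with $P'\cup Q'\in\Sig$, prove the statement there, and transport the conclusion back along the copy-isomorphisms. So from now on I assume $P\cap Q=\emptyset$ and $R=P\cup Q\in\Sig$.

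In this situation $\cL^P$ and $\cL^Q$ are both reducts of $\cL^R$ by condition (3), so Proposition \ref{prop:reduct} lets me view $\F^P\slash\!\sim^P$ and $\F^Q\slash\!\sim^Q$ as subalgebras of $\F^R\slash\!\sim^R$ (via $\phi\slash\!\sim^P\mapsto\phi\slash\!\sim^R$, $\psi\slash\!\sim^Q\mapsto\psi\slash\!\sim^R$), and gives $\Mng^P=\{\mng^R_\gM\!\upharpoonright\!\F^P:\gM\in M^R\}$, similarly for $Q$. The decisive step is to invoke condition (4) through Theorem \ref{gen4}\eqref{4prodaa}: $\F^R\slash\!\sim^R$ is the $\SP\Alg_m(\cL^R)$-free product of $\F^P\slash\!\sim^P$ and $\F^Q\slash\!\sim^Q$. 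Because $\F^R\slash\!\sim^R=\Fr(\Alg_m(\cL^R),R,\sim^R)$ (Theorem \ref{fmalg}(i)) is itself a member of $\SP\Alg_m(\cL^R)$, it is an admissible target for the universal property of this free product. Applying that property to the homomorphisms $\F^P\slash\!\sim^P\xrightarrow{\tilde b}\F^Q\slash\!\sim^Q\hookrightarrow\F^R\slash\!\sim^R$ and $\F^Q\slash\!\sim^Q\xrightarrow{\tilde b^{-1}}\F^P\slash\!\sim^P\hookrightarrow\F^R\slash\!\sim^R$ produces an endomorphism $\gamma$ of $\F^R\slash\!\sim^R$ that swaps the two factors; since $\tilde b^{-1}\circ\tilde b$ and $\tilde b\circ\tilde b^{-1}$ are identities and the two factors generate $\F^R\slash\!\sim^R$, one gets $\gamma\circ\gamma=\id$, so $\gamma$ is an automorphism of the tautological formula algebra. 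By construction $\gamma(q_R(\phi))=q_R(\hat b(\phi))$ for all $\phi\in\F^P$, where $q_R:\F^R\to\F^R\slash\!\sim^R$ is the quotient map.

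I would then transport concept algebras across $\gamma$. Fix $g=\mng^Q_\gN\in\Mng^Q$ and, using that the reduct map $M^R\to M^Q$ is onto, choose $\gM\in M^R$ with $\mng^R_\gM\!\upharpoonright\!\F^Q=g$. Factor $\mng^R_\gM=m_\gM\circ q_R$ with $m_\gM:\F^R\slash\!\sim^R\to\mng^R_\gM(\F^R)\in\Alg_m(\cL^R)$, and consider $m_\gM\circ\gamma\circ q_R$, a homomorphism from $\F^R$ into $\mng^R_\gM(\F^R)\in\Alg_m(\cL^R)$ whose kernel contains $\ker q_R=\;\sim^R$. By conditional substitutionality of $\cL^R$ (Claim \ref{claim:subschar}(ii)) it therefore belongs to $\Mng^R$, i.e.\ equals $\mng^R_{\gM'}$ for some $\gM'\in M^R$. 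Restricting to $\F^P$ and using $\gamma\circ q_R\!\upharpoonright\!\F^P=q_R\circ\hat b$ gives $\mng^R_{\gM'}\!\upharpoonright\!\F^P=\mng^R_\gM\circ\hat b\!\upharpoonright\!\F^P=g\circ\hat b$, whose left-hand side lies in $\Mng^P$ because $\cL^P$ is a reduct of $\cL^R$. Hence $\{g\circ\hat b:g\in\Mng^Q\}\subseteq\Mng^P$; running the identical argument with the roles of $P,Q$ and of $b,b^{-1}$ interchanged (again $\gamma=\gamma^{-1}$ does the work) yields the reverse inclusion, and (6a) follows.

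The step I expect to be the main obstacle, and the place where (4) enters inherently, is the construction of the automorphism $\gamma$ of $\F^R\slash\!\sim^R$. An isomorphism of tautological formula algebras only determines $\Alg_m$ up to $\HSP$, i.e.\ its equational theory (Theorem \ref{fmalg}(iii)), which is too coarse to match the actual concept algebras demanded by a meaning-isomorphism; it is the free-product description of $\F^R\slash\!\sim^R$ coming from (4)—together with the fact that this algebra itself lies in $\SP\Alg_m(\cL^R)$, making it a legitimate target of the universal property—that realizes the abstract isomorphism $\tilde b$ as an honest automorphism living inside the larger logic $\cL^R$, which is exactly what permits concept algebras to be shuttled between $\cL^P$ and $\cL^Q$. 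Some care is also needed in the reduction of the first paragraph: one must check that the copy-isomorphisms supplied by (5) restrict to bijections of atomic formulas (valid because $P$ is recoverable from $\F^P$ as its set of non-proper terms) and that they transport both the hypothesis on $b$ and the desired conclusion faithfully.
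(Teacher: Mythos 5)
Your proof is correct, and it rests on the same pillars as the paper's own argument: the reduction via condition (5)b to disjoint copies $P',Q'$ with $P'\cup Q'\in\Sig$, an appeal to condition (4) through one of its equivalent forms in Theorem \ref{gen4}, conditional substitutionality (Claim \ref{claim:subschar}(ii)) to recognize a constructed homomorphism with kernel containing $\sim^R$ as a genuine meaning function, and restriction along reducts (condition (3)). Where you genuinely diverge is in \emph{how} (4) is exploited. The paper, given $f\in\Mng^P$, sets $g=f\circ\hat b^{-1}$, patches copies of $f$ and $g$ into a single homomorphism $h:\F^R\to\gA\in\Alg_m(\cL^R)$ defined atom-by-atom on $R=P'\cup Q'$, and applies the kernel characterization Theorem \ref{gen4}\eqref{4homa} to conclude $\sim^R\subseteq\ker(h)$; conditional substitutionality then puts $h$ in $\Mng^R$ and restriction finishes. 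You instead invoke the free-product characterization Theorem \ref{gen4}\eqref{4prodaa} once and for all, to manufacture a factor-swapping automorphism $\gamma$ of $\F^R\slash\!\sim^R$, and then transport each $g\in\Mng^Q$ by forming $m_\gM\circ\gamma\circ q_R$, where the needed containment $\sim^R\subseteq\ker(\cdot)$ is automatic because the map factors through $q_R$. The two mechanisms are equivalent in substance (Theorem \ref{gen4} itself proves \eqref{4homa}$\Leftrightarrow$\eqref{4prodaa}), but yours makes the $P\leftrightarrow Q$ symmetry structural ($\gamma=\gamma^{-1}$ serves both inclusions), at the price of two extra observations the paper's patching avoids: that $\F^R\slash\!\sim^R$ itself lies in $\SP\Alg_m(\cL^R)$ --- which is what legitimizes it as a target of the universal property, and is exactly why the $\SP$-version \eqref{4prodaa} rather than \eqref{4proda} must be used --- and that the copy-isomorphisms supplied by (5) restrict to bijections of atomic formulas. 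Both are true (the first by the subdirect decomposition $\sim^R\ =\ \bigcap\{\ker(\mng_{\gM}):\gM\in M^R\}$, the second because the generators of a term algebra are precisely the elements outside the ranges of the operations), and you flag both, so the argument is complete.
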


\begin{proof} Assume that $b:P\to Q$ is a bijection between $P,Q\in\Sig$ that extends to an isomorphism $k$ between the tautological formula algebras. Let $P',Q'\in\Sig$ be such that $\cL(P')$ and $\cL(Q')$ are isomorphic copies of $\cL(P)$ and $\cL(Q)$ respectively, say, via the isomorphisms $i$ and $j$ and such that $R=P'\cup Q'\in \Sig$ and $P'$ and $Q'$ are disjoint. There are such $P',Q'$ by condition (5)b. Assume that $f\in\Mng^P$, we will show that $g=f\circ{(\overline{b})}^{-1}\in\Mng^Q$, where $\overline{b}:\F^P\to\F^Q$ is the extension of $b$ to $\F^P$. 		
		
Let $f'=f\circ i$, then $f'\in\Mng^{P'}$ since $i$ is an isomorphism between $\cL(P)$ and $\cL(P')$.  Thus 
$\sim_{P'}\subseteq\ker(f')$.
By $P'\subseteq R$ and condition (3), $f'$ is the restriction of some $f''\in\Mng^R$. Let $f'':\F^R\to\gA\in\Alg_m(\cL^R)$.

Let now turn to $g$. Since $\sim^P\in\ker(f)$ by $f\in\Mng^P$, and $b$ extends to an isomorphism between the tautological formula algebras, we have that $\sim^Q\subseteq\ker(g)$. For details, see Figure \ref{6-fig}. Let $g'=g\circ j$, then $\sim^{Q'}\subseteq\ker(g')$ since $j$ is an isomorphism, and $g':\F^{Q'}\to\gA$ because the ranges of $f$ and $g$ coincide.

Define $h$ on $R$ such that $h$ is $f''$ on $P'$ and $h$ is $g'$ on $Q'$. Let us denote also by $h$ the extension to $\F^R$. Then $h:\F^R\to\gA\in\Alg_m(\cL^R)$ such that $\ker(h)\supseteq\;\sim^{P'}\cup\sim^{Q'}$. Then $\ker(h)\supseteq\;\sim^R$ by condition (4) and Theorem~\ref{gen4}\eqref{4homa}. 
	Thus, $h\in\Mng^R$ since $\cL(R)$ is conditionally substitutional. Thus the restriction of $h$ to $F(Q')$ is in $\Mng^{Q'}$, so $g\in\Mng^Q$ since $j$ is an isomorphism between $\cL(Q)$ and $\cL(Q')$. We have seen that $\Mng^P\subseteq\{ g\circ \overline{b} : g\in\Mng^Q\}$. Now,	
$\Mng^Q\subseteq\{ g\circ (\overline{b}^{-1}) : g\in\Mng^P\}$ can be proved the same way, so $\Mng^P=\{ g\circ \overline{b} : g\in\Mng^Q\}$.	
\end{proof}

\begin{figure}[ht!]\begin{center}
\begin{tikzpicture}[->,>=stealth',shorten >=1pt,auto,node distance=2cm,
                main node/.style={}]

  \node[main node] (O) {$R=P\cup Q'$};
  \node[main node] (A1) [below left of=O] {$\F^{P'}$};
  \node[main node] (B1) [below right of=O] {$\F^{Q'}$};

  \node[main node] (A2) [below of=A1] {$\F^{P}$};
  \node[main node] (B2) [below of=B1] {$\F^{Q}$};

  \node[main node] (A3) [below of=A2] {$\F^{P}/_{\sim^{P}}$};
  \node[main node] (B3) [below of=B2] {$\F^{Q}/_{\sim^{Q}}$};

  \node[main node] (D) [below right of=A3] {$\bullet$};

  \path 
    (A1) edge[<->] node[left] {$i$} (A2) 
    (B1) edge[<->] node[right] {$j$} (B2) 
    (A2) edge[->] node[left] {$\varepsilon^P$} (A3) 
    (B2) edge[->] node[right] {$\varepsilon^Q$} (B3)
    (A2) edge[->] node[above] {$\overline{b}$} (B2) 
    (A3) edge[-] node[above] {$k$} (B3)
    (A3) edge[dashed] node {} (D)
    (B3) edge[dashed] node {} (D); 
  \path  
    (A2) edge[out=200,in=170,looseness=2] node[left] 
        {$\Mng^P\ni f$} (D);
  \path
    (B2) edge[looseness=2,out=-20,in=10] node[right] {$g$} (D);
  
  \node (X) at (1.2,-7.2) {$\subseteq\gA\in\Alg_m(\cL^P)$};
  \node (Y) at (4.05,-5.7) {$\ker(g)\supseteq\;\sim^{Q}$};

  \node (Z) at (-1.1,-0.7) {\rotatebox[origin=c]{45}{$\subseteq$}};
  \node (W) at (1.1,-0.7) {\rotatebox[origin=c]{-45}{$\supseteq$}};

\end{tikzpicture}
\caption{Illustration for the proof of Theorem \ref{3-theorem}.}
\label{6-fig}
\end{center}\end{figure}

\bigskip

We close the section by investigating how restrictive the notion of logic families is.

	\begin{proposition}[existence of logic families]\label{thm:restr}   The following statements are true.
		\begin{enumerate}[(i)]\itemsep-2pt
			\item\label{existsubs} Each substitutional meaning-determined logic is a member of a 
			substitutional logic family with class of signatures being all sets not containing proper terms by the connectives.
			\item\label{existcsubs}  Each conditionally substitutional logic is a member of some conditionally substitutional logic family.
			\item\label{nonexist} There is a logic which is not a member of any logic family, but each logic is a member of a pre-family of logics.
	\end{enumerate}\end{proposition}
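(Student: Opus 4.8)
The plan is to handle the three items separately, constructing each family by hand and leaning on the characterizations of (4) and (4b) established above; the family‑free half of \eqref{nonexist} will reuse the construction of \eqref{existcsubs} with (4) simply dropped.

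For \eqref{existsubs} I would fix a substitutional meaning‑determined logic $\cL$, put $\K=\Alg_m(\cL)$, and let $\Sig$ be the class of all sets containing no proper $\Cn$‑terms. For $P\in\Sig$ set $\F^P=\F(P,\Cn)$ and declare the models of $\cL^P$ to be the homomorphisms $h\colon\F^P\to\gA$ with $\gA\in\K$, with $\mng_h=h$ and validity read off from the truth sets furnished by referential transparency (Definition \ref{def:reftr}); model‑slimness identifies $\cL$ with $\cL^{P(\cL)}$. By Claim \ref{subsequi}(iii) each $\cL^P$ is substitutional. Conditions (1),(2) are immediate; (3) holds because a homomorphism into $\gA$ restricts to one into $\gA$ and extends back ($\gA\neq\emptyset$, $\F^P$ free); and (5) follows by renaming atoms along bijections (which induce isomorphisms) and picking fresh atoms so that unions again avoid proper terms. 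Thus $\LL=\langle\cL^P:P\in\Sig\rangle$ is a substitutional pre‑family, and Corollary \ref{cor:subs} upgrades it to a logic family satisfying (4b).

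For \eqref{existcsubs} the same scheme works once the condition travels with the signature. With $P_0=P(\cL)$, $\K=\Alg_m(\cL)$ and $S_0=\sim_{\cL}$, I would let $\Sig$ consist of the disjoint unions $P=\bigsqcup_{i\in I}P_0^{(i)}$ of renamed copies of $P_0$ and attach to $P$ the condition $S^P=\bigcup_i\bar\iota_i(S_0)$. A model of $\cL^P$ is a family $(\gM_i)_{i\in I}$ of models of $\cL$ agreeing on closed terms; its meaning function is the canonical map $\F^P\to\coprod_i\mng_{\gM_i}(\F^{P_0})$ into the $\SP\K$‑free product (realised so the factors sit as literal subalgebras), and validity is taken copy‑by‑copy from the truth sets of the $\gM_i$. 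A single copy returns $\cL$ itself, so $\cL$ is a member. Using Claim \ref{claim:subschar}(iii) together with the conditional substitutivity of $\cL$ on each copy one checks $\Mng^P=\Hom(\F^P,\SP\K,S^P)$, so every $\cL^P$ is conditionally substitutional, and (1)--(3),(5) hold as before. Since $S^P=\bigcup_i S^{P_i}$ for every whole‑copy decomposition $P=\bigcup_iP_i$, the defining relations add up, so Theorem \ref{csub4} (equivalently the patchwork property with Theorem \ref{thm:pwp2}(i)) yields (4), giving a conditionally substitutional family containing $\cL$. For the first half of \eqref{nonexist} I would run exactly this construction for an \emph{arbitrary} logic, using no substitutivity: verifying (1)--(3),(5) never invokes (4) nor the computation of $\Mng^P$, so every logic lies in a pre‑family.

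For the logic in \emph{no} family I would take $\cL_0$ not conditionally substitutional — concretely a finite‑variable fragment $\FOL_t(V)$ with $2\le|V|<\omega$ and a $|V|$‑ary relation symbol, which fails conditional substitutivity by Example \ref{claim:FOLnnocondsubs}. Supposing $\cL_0=\cL^{P_0}$ in a family, condition (2) forces the $V$‑indexed quantifiers and equalities, hence $V$, to be shared by all members, so signatures are sets of atomic formulas over the fixed $V$; then (5) supplies a disjoint isomorphic copy and their union, and the concept‑algebra automorphism witnessing the failure of conditional substitutivity produces a homomorphism $h\colon\F^{P}\to\SP\Alg_m(\cL^{P})$ with $\bigcup_i\sim^{P_i}\subseteq\ker h$ but $\sim^{P}\not\subseteq\ker h$, contradicting the characterization Theorem \ref{gen4}\eqref{4homaa} of (4) — the same mechanism by which the natural systems already fail (4) in Theorem \ref{thm:nofamily}. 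The main obstacle is precisely this last step: items \eqref{existsubs} and \eqref{existcsubs} are bookkeeping once one has Corollary \ref{cor:subs} and Theorem \ref{csub4}, their only delicate points being the literal matching of meanings across reducts (handled by realising the free product with its factors as genuine subalgebras) and the consistency of validity on shared closed terms (handled by requiring the $(\gM_i)$ to agree there); by contrast, \eqref{nonexist} demands that the obstruction be made \emph{family‑independent}, i.e.\ that one argue \emph{every} conceivable family — not merely the natural one — is pinned down enough by (2), (3) and (5) for the twisting homomorphism violating (4) to be unavoidable.
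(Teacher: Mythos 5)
Your item (i) is essentially the paper's own proof (same $\Sig$, same $\Hom(\F^P,\Alg_m(\cL))$ definition of the models, same appeal to Claim \ref{subsequi} and Corollary \ref{cor:subs}), so there is nothing to flag there. The gaps are in (ii) and in the second half of (iii).

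In (ii), the decisive deviation from the paper is your choice of target for the meaning functions: you make the meaning function of a family $(\gM_i)_{i\in I}$ the canonical map onto the $\SP\K$-free product $\coprod_i\mng_{\gM_i}(\F)$, whereas the paper takes as models the pairs $\langle(\gM_i)_{i\in I},\gA\rangle$ whose meaning functions all land in one \emph{common} algebra $\gA\in\Alg_m(\cL)$, so that $\Mng^P=\Hom(\F^P,\Alg_m(\cL),S^P)$ holds by Claim \ref{claim:subschar}(ii) applied copy by copy. With your choice, the key identity $\Mng^P=\Hom(\F^P,\SP\K,S^P)$ is false: for $I=\{1,2\}$, the ``diagonal'' homomorphism $h$ sending both renamed copies of each atom $p$ to $\mng_{\gM}(p)\in\gA=\mng_{\gM}(\F)\in\K\subseteq\SP\K$ has $S^P\subseteq\ker(h)$, yet $h$ is not the canonical map onto any two-factor free product (its image is $\gA$, and the copy-$2$ atoms do not land in a second factor), so $h\notin\Mng^P$ whenever free products of concept algebras do not collapse. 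Hence conditional substitutivity of your $\cL^P$ is not established (and in general fails), and with it the appeal to Theorem \ref{csub4} or Theorem \ref{thm:pwp2}(i) for condition (4) collapses. Two further problems come with the same choice: $\SP\K$-free products containing the factors as genuine subalgebras need not exist for an arbitrary $\K$ (nothing guarantees the factors jointly embed into a common member), and condition (3) would need the subalgebra of $\coprod_{i\in I}$ generated by a subfamily of factors to \emph{be} the free product of that subfamily with literally matching meanings, which fails in general because the universal property does not restrict (there may be no homomorphisms from the omitted factors into a given target). The paper's common-target device avoids all three difficulties simultaneously, and it is also what makes the first half of your (iii) work: for an arbitrary logic, that construction verifies (1),(2),(3),(5) directly, giving the pre-family claim, whereas your free-product version already stumbles on (3).

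For the logic belonging to \emph{no} family, your candidate (a finite-variable fragment $\FOL_t(V)$) together with the sketched argument does not suffice, as you partly concede. Theorem \ref{thm:nofamily} refutes (4) only for the \emph{natural} family $\LL_{FOL}(V)$, where the logics at the sub-signatures and at the union signature are known, and where a model of the union logic is a single first-order structure, so that meanings of mixed formulas such as $\exists v_0\exists v_1(R_1\land R_2)$ with $R_1,R_2$ from different parts are computed set-theoretically inside one concept algebra. In an arbitrary family containing $\FOL_t(V)$ as one member, condition (2) fixes only the connectives; the signatures are bare sets, the decomposition you need may simply be absent from $\Sig$, and at a union signature $R=P'\cup Q'$ the reduct condition only forces the restrictions of $\mng^R_{\gM}$ to the two parts to be FOL meaning functions --- the values of $\land$ and $\exists v_i$ on \emph{mixed} elements of the target algebra are unconstrained, so the cross-identifications your twisting homomorphism is meant to violate are not forced into $\sim^R$ in the first place. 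The paper escapes this by engineering a logic in which nothing is left free: $\Alg_m(\cL)=\{\gA\}$ with $\gA$ the absolutely free one-generated $\Cn$-algebra and every atom meaning the generator $a$ in every model. Then in any putative family, (5)a,b produce disjoint copies $P',Q'$ with $R=P'\cup Q'\in\Sig$; condition (3) together with isomorphism-invariance of meaning algebras forces every atom of $R$ to mean $a$ in every model of $\cL^R$, so $P'\times Q'\subseteq\ \sim^R$; but only trivial quasi-equations hold in a free algebra, so $P'\times Q'$ is not contained in the $\vd_{\cL^R}$-closure of $\sim^{P'}\cup\sim^{Q'}$, contradicting Theorem \ref{gen4}(i). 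Any repair of your argument would need an analogous ``everything is forced'' feature, which finite-variable FOL does not provide.
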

	
	\begin{proof} Proof of \eqref{existsubs}: Let $\cL$ be a substitutional meaning-determined logic with $\Cn$ as connectives. Let $\Sig$ be the class of all sets not containing proper terms written up by $\Cn$. For all $P\in\Sig$ let $\cL^P$ be ``the substitutional meaning-determined logic'' with $P$ as atomic formulas, $\Cn$ as connectives, with $\Mng(\cL^P)=\Hom(\F^P,\Alg_m(\cL))$ and with validity inherited from $\cL$. It is easy to check that $\LL$ is a substitutional pre-family of logics, by using that $\cL^P$ is an isomorphic copy of $\cL^Q$ iff $P$ and $Q$ have the same cardinality. Then $\LL$ is a family of logics by Corollary \ref{cor:subs}. Clearly, $\cL$ is a member of this family.
		
		Proof of \eqref{existcsubs}: Let $\cL=\langle F, M, \mng, \models\rangle$ be a conditionally substitutional logic.
		The idea is that we make $\LL$ to consist of all unions of disjoint (almost) isomorphic copies of $\cL$. Let $\Sig = \{ P\times I : I\mbox{ is a set and }P\cap\Cn=\emptyset\}$. 
		Assume $Q=P\times I\in\Sig$. Let us define $\cL^Q=\langle F^Q, M^Q , \mng^Q, \models^Q\rangle$ such that its set of atomic formulas is $Q$, its set of connectives is that of $\cL$, and meaning and validity are defined as follows.  The models are those $I$-sequences of models of $\cL$ whose meaning functions go to a common $\gA\in\Alg_m(\cL)$, i.e., $M^Q = \{\langle\gM,\gA\rangle\in{}^IM\times\Alg_m(\cL) : (\forall i\in I)\mng_{\gM_i}(\F)\subseteq \gA\}$. For an $\langle\gM,\gA\rangle\in M^Q$ the meaning function is then $h:\F^Q\to\gA$ such that $h(\< p,i\>)=\mng(\gM_i,p)$ for all $\< p,i\>\in Q$.
		Validity is defined as $\langle\gM,\gA\rangle\models\phi$ if and only if $\phi\in F^{(P\times\{ i\})}$ for some	$i\in I$ and $\gM_i\models\phi'$ where $\phi'$ is $\phi$ such that we replace $p\in P$ with $\< p,i\>$ in it, for each $p\in P$. This defines $\LL$. Conditions (1),(2) are clearly satisfied and it is not difficult to check that (3) is also satisfied. Each bijection $b:I\to J$ induces an  isomorphism between $\cL^{P\times I}$ and $\cL^{P\times J}$, so condition (5) is satisfied. Hence, $\LL$ is a pre-family of logics. It is not difficult to check that $\Mng(\cL^{P\times I})=\Hom(\F^{P\times I}, 
		\Alg_m(\cL), S(I))$ where $S(I)=\bigcup\{\ h_i(\sim_{\cL}) : i\in I\}$ where $h_i:\F^P\to\F^{P\times\{ i\}}$ is the isomorphism induced by $h(p)=\< p,i\>$ for all $p\in P$. Here, one uses that $\cL$ is conditionally substitutional together with Claim \ref{claim:subschar}. Thus, $\LL$			
		 is a conditionally substitutional pre-family of logics. It satisfies condition (4) by Theorem \ref{csub4}\eqref{4si}. Though, strictly speaking, $\cL$ is not a member of this family, it is easy to change $\LL$ so that this will hold. Namely, $\cL^{P\times\{ i\}}$ may not be isomorphic to $\cL$ because  the models of the former are $\{\langle\gM,\gA\rangle\in M\times\Alg_m(\cL) : \Rng(\mng_{\gM})\subseteq\gA\}$, thus each model of $\cL$ may appear in $\cL^{P\times\{ i\}}$ in several copies. However, $P\times\{ i\}$ are minimal elements of $\Sig$, so we can replace their classes of models with $M$ itself. 		 
		
		Proof of \eqref{nonexist}: Let $\cL=\langle F, M, \mng, \models\rangle$ be such that $|P|\ge 2$ and $\Alg_m(\cL)=\{\gA\}$ where $\gA$ is the $\Cn$-free algebra freely generated by $a\in A$ and $\mng(\gM,p)=a$ for all $p\in P$. Assume that $\cL=\LL(P)$ for some logic family $\LL$. We will derive a contradiction. There is $Q\in\Sig$ disjoint from $P$ such that $\cL^Q$ is an  isomorphic copy of $\cL$, by (5)a. There are $P',Q'\in\Sig$ such that $R=P'\cup Q'\in\Sig$ and $\cL^{P'}$ and $\cL^{Q'}$ are isomorphic copies of $\cL^P$ and $\cL^Q$ respectively, by (5)b. Then $\mng^{P'}(\gM,p)=\mng^{Q'}(\gN,q)=a$ for all models $\gM, \gN$ of $\cL^{P'}, \cL^{Q'}$ respectively, and for all $p\in P'$ and $q\in Q'$, since $\cL^{P'}$ and $\cL^{Q'}$ are isomorphic copies of $\cL$. 
		By (3), then the same holds for $\cL^{R}$, i.e., $\mng^R(\gM,p)=a$ for all models $\gM$ of $\cL^R$ and $p\in R=P'\cup Q'$. This means that $P'\times Q'\subseteq\sim^R$.	Notice now that only ``trivial" quasi-equations are true in $\gA$ because it is a free algebra, therefore
		$P'\times Q'$ is not a subset of the $\vd_{\cL^R}$-closure of $P'\times P'\cup Q'\times Q'$.	
		Therefore, Theorem \ref{gen4}\eqref{4subd} does not hold, i.e., $\LL$ cannot satisfy (4), a contradiction.	
	\end{proof}

\section{The key examples as logic families}\label{sec:exa}

In this section, we show that both of our key examples, classical propositional logic and first-order
logic, fit in the scheme of logic family. By this we mean that their definitions suggest natural systems of logics, and we show that these systems are indeed logic families.  We also show that the systems formed of the finite-variable fragments of the first-order logic are not all logic families, although they have the  patchwork property of models. 
\medskip

Let $\CPL(P)$ denote classical propositional logic with atomic formulas $P$, see Example \ref{def:proplogic}.

\begin{theorem}\label{thm:proplogicgeneral} 
	For classical propositional logic, the class
	\begin{equation}
		\LL_C = \<\CPL(P)\; :\; P\text{ is a set not containing proper terms by }\{\land,\lnot,\bot\}\>
	\end{equation}
	is a logic family in the sense of Def.\ref{def:generallogic}.
\end{theorem}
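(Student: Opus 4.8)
The plan is to verify the five defining conditions of Definition \ref{def:generallogic} directly, exploiting the fact that each member $\CPL(P)$ is substitutional (Example \ref{ex:pcsubst}) so that the troublesome condition (4) comes essentially for free. Concretely, I would first check that $\LL_C$ is a \emph{pre-family} of logics by establishing (1), (2), (3) and (5), and only afterwards invoke Corollary \ref{cor:subs} to obtain (4) --- in fact the stronger (4b).

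Conditions (1) and (2) are immediate from Example \ref{def:proplogic}: the parameter $P$ is by construction the set of atomic formulas of $\CPL(P)$, and every member uses the same connective set $\{\land,\lnot,\bot\}$. For (3), given $P\subseteq Q$ I would take the reduct function $f^M:M^Q\to M^P$ to be restriction of valuations, $f^M(\gM)=\gM\upharpoonright P$. Since $P\subseteq Q$ and the connectives agree, $\F^P$ is a subalgebra of $\F^Q$; the restriction map is onto because any valuation on $P$ extends to one on $Q$; and because the meaning of a formula $\phi\in F^P$ is computed by the recursion of Example \ref{def:proplogic} using only the values of the atoms occurring in $\phi$ (all of which lie in $P$), we get $\mng^Q_{\gM}(\phi)=\mng^P_{\gM\upharpoonright P}(\phi)$ and the corresponding equivalence for $\models$. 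This is exactly Definition \ref{def:reduct}.

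The only condition requiring genuine care is (5), and it rests on the observation that for $\CPL$ the class of meaning algebras is always $\{\mathbf 2\}$ (the $\{\land,\lnot,\bot\}$-reduct of the two-element Boolean algebra) and the class of meaning functions is all of $\Hom(\F^P,\mathbf 2)$, by the proof of Example \ref{ex:pcsubst}. Hence any bijection of atomic formulas onto a set of fresh symbols induces an isomorphism of logics. For (5a), given $P\in\Sig$ and a set $H$, I would choose $P'$ to be a set of cardinality $|P|$ disjoint from $H$ and containing no proper $\{\land,\lnot,\bot\}$-terms (such fresh atoms always exist set-theoretically); any bijection $P\to P'$ then extends to an algebra isomorphism $\F^P\to\F^{P'}$ and carries the meaning functions onto those of $\CPL(P')$, yielding a logic isomorphism. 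For (5b), given a nonempty family $\langle P_i:i\in I\rangle$ I would first replace each $P_i$ by a pairwise-disjoint fresh copy $P_i'$ as above; since the $P_i'$ are disjoint sets of fresh atoms, no element of one is a proper term built from the others, so $\bigcup_i P_i'$ again contains no proper terms and lies in $\Sig$. Condition (5c) is witnessed by any singleton $\{p\}$ with $p$ not a proper term. The main (and only mild) obstacle is the bookkeeping needed to guarantee, under renaming and union, that the resulting sets still contain no proper terms; this is automatic once one insists on genuinely fresh atomic symbols.

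Having established that $\LL_C$ is a pre-family, I would finish by noting that each $\CPL(P)$ is substitutional by Example \ref{ex:pcsubst}, so $\LL_C$ is a substitutional pre-family in the sense of Definition \ref{def:generallogic}. Corollary \ref{cor:subs} then immediately supplies condition (4) (indeed (4b)), and hence $\LL_C$ is a logic family.
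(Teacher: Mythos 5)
Your proposal is correct and follows essentially the same route as the paper's own proof: conditions (1)--(3) and (5) are verified directly (the paper likewise handles (5) via bijections of atomic formulas inducing isomorphisms of logics), and condition (4) is then obtained from Corollary \ref{cor:subs} using the substitutionality of $\CPL$ established in Example \ref{ex:pcsubst}. The only difference is that you spell out the reduct function for (3) and the freshness bookkeeping for (5) in more detail than the paper, which simply declares these steps straightforward.
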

\begin{proof}
	Let $\Sig$ be the class of all nonempty sets $P$ which do not contain proper terms written up by $\{\land,\lnot,\bot\}$.
	Items (1), (2) of \ref{def:generallogic} are straightforward. 
	For (3), it is easy to check that $\CPL(P)$ is a reduct of $\CPL(Q)$ whenever $P\subseteq Q$.

	As for (5), any bijection $f:P\to Q$ for sets $P$ and $Q$ induces an isomorphism
	between the corresponding logics $\CPL(P)$ and $\CPL(Q)$: $f^F$ is the homomorphic 
	extension of $f$ to $f^F:\F^P\to\F^Q$, and for a model $\gM:P\to 2$
	let $f^M$ assign the model $f^M(\gM):Q\to 2$ defined by $f^M(\gM)(x) = \gM(f^{-1}(x))$. 
	From this, (5) follows.
	
	(4) follows now from Corollary \ref{cor:subs} since $\CPL$ is substitutional by \ref{ex:pcsubst}.

\end{proof}

Let us turn to our main example. 
For a fixed nonempty set $V$ of variables, $\LL_{FOL}(V)$ is defined as follows. 
$\Sig(V)$ contains the sets $P_t(V)$ for similarity types $t$ (see Example \ref{def:FOL}).
Now, each $P_t(V)$ determines the similarity type $t$ and thus the logic
$\FOL_t(V)$. It follows that the class
\begin{eqnarray}
	\LL_{FOL}(V) &=& \< \FOL_t(V):\; P_t(V)\in \Sig(V) \> 
\end{eqnarray}
is well defined. 

In the case $V=\emptyset$, for any similarity type $t$ we have 
$P_t(V) =\emptyset$ but $F_t(V)$ is not empty as there is a constant symbol 
$\bot$ among the connectives. 
Set $\Sig(\emptyset) = \{\emptyset\}$ and $\LL_{FOL}(\emptyset)=\< \FOL_\emptyset(\emptyset):\; P_\emptyset(\emptyset)\in \Sig(\emptyset) \>$. 
Then $\LL_{FOL}(\emptyset)$
is a one-element sequence. Recall that \ref{def:generallogic}(5)c prevents us to
call $\LL_{FOL}(\emptyset)$ a logic family (but this is only a convention that comes handy in certain situations).

\begin{theorem}\label{thm:pwp1}
	Propositional logic $\LL_C$ and first-order logic $\LL_{FOL}(V)$, 
	for any set $V$ of variables, have the patchwork property of models.
\end{theorem}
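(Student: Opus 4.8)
The plan is to verify Definition \ref{def:patchwork} directly for each of the two families, by explicitly patching the given models into a single model over a common universe and then using that every meaning function is a homomorphism (Definition \ref{def:logic}(iv)) to conclude that the patched model reproduces each $\gM_i$ on $F^{P_i}$. A preliminary observation simplifies both cases: in $\LL_C$ and in $\LL_{FOL}(V)$ the constant $\bot$ belongs to every formula algebra, so $F^{P_i\cap P_j}\neq\emptyset$ for all $i,j$, and hence the compatibility hypothesis \eqref{eq:azonosak} is in force for \emph{every} pair $i,j\in I$, not merely for pairs with overlapping atoms.

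For propositional logic $\LL_C$, a model of $\CPL(P)$ is a map $\gM\colon P\to\{0,1\}$ whose meaning function is the homomorphic extension to $\F^P$. Given compatible $\gM_i\colon P_i\to\{0,1\}$, I would first note that for $p\in P_i\cap P_j$ the atom $p$ lies in $F^{P_i\cap P_j}$, so \eqref{eq:azonosak} gives $\gM_i(p)=\gM_j(p)$; thus $\gM(p)=\gM_i(p)$ (for any $i$ with $p\in P_i$) is a well-defined map $P\to\{0,1\}$, i.e.\ an element of $M^P$. Since $\mng_{\gM}\upharpoonright F^{P_i}$ and $\mng_{\gM_i}$ are two homomorphisms into $\mathbf{2}$ agreeing on the free generators $P_i$, they coincide, which is exactly \eqref{eq:azonos}.

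The first-order case is the substantive one. A model of $\FOL_t(V)$ carries a universe, and the codomain of $\mng_{\gM_i}$ is $\mathcal{P}({}^V M_i)$, so a common model can exist only if all the $\gM_i$ share one universe. The key first step is to extract this universe from the hypothesis: for $V\neq\emptyset$ the formula $\lnot\bot\in F^{P_i\cap P_j}$ has $\mng_{\gM_i}(\lnot\bot)={}^V M_i$, so \eqref{eq:azonosak} forces ${}^V M_i={}^V M_j$ and hence a common universe $M_i=M_j=M$. (The degenerate case $V=\emptyset$ is trivial, since then every $P_t(\emptyset)=\emptyset$, so $\Sig(\emptyset)=\{\emptyset\}$ and all meaning functions already coincide.) Writing $P=P_t(V)$ with $\dom(t)=\bigcup_i\dom(t_i)$, I would then define $\gM=\langle M,r^{\gM}\rangle_{r\in\dom(t)}$ by choosing, for each $r\in\dom(t)$, some index $i$ with $r\in\dom(t_i)$ and setting $r^{\gM}=r^{\gM_i}$.

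It remains to check \eqref{eq:azonos}, and this is the delicate point: two of the given models sharing a relation symbol $r$ need not agree on $r$ as a relation, but only on the \emph{meanings} of the atomic formulas $r(\vec v)$, $\vec v\in V^n$, which ``see'' only tuples whose equality pattern is realizable in $V$ (a real restriction when $n>|V|$). This is nevertheless exactly what is needed: for any $j$ with $r\in\dom(t_j)$ and any $\vec v$, the atom $r(\vec v)$ lies in $F^{P_i\cap P_j}$ (with $i$ the chosen representative for $r$), so $\mng_{\gM}(r(\vec v))=\mng_{\gM_i}(r(\vec v))=\mng_{\gM_j}(r(\vec v))$ by \eqref{eq:azonosak}; hence $\mng_{\gM}\upharpoonright F^{P_j}$ and $\mng_{\gM_j}$ agree on the generators $P_{t_j}(V)$. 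As both are homomorphisms into the same meaning algebra over the common universe $M$ — the Boolean operations for $\land,\lnot,\bot$, the cylindrifications for $\exists v$, and the diagonals for $v=w$ all depend only on $M$ — they agree on all of $F^{P_j}$, giving \eqref{eq:azonos}. I expect the main obstacle to be precisely this universe-matching step together with the realization that agreement of meanings (rather than of the interpreted relations themselves) is all the patching requires; once these two facts are isolated, the homomorphism-extension argument closes both families uniformly.
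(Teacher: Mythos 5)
Your proof is correct and follows essentially the same route as the paper's: patch the propositional valuations by taking their union, and in the first-order case extract a common universe from the hypothesis (the paper reads it off the interpretation of $\top$, you from $\mng_{\gM_i}(\lnot\bot)={}^V M_i$), then build the patched model symbol by symbol and conclude via agreement of homomorphisms on the free generators. One point in your favour: where the paper asserts that \eqref{eq:azonosak} forces the \emph{interpretations} of a shared symbol $r$ to coincide in $\gM_i$ and $\gM_j$ --- which is guaranteed only when $t(r)\le|V|$, and so can fail in the finite-variable fragments that the theorem also covers --- you correctly observe that only the \emph{meanings} of the atomic formulas $r(\vec v)$ need to agree, pick one representative interpretation per symbol, and note that agreement of atomic meanings together with the homomorphism property already yields \eqref{eq:azonos}. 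So your write-up is slightly more careful than the paper's own proof at exactly the step where the paper's ``well defined'' claim is loose.
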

\begin{proof}
	As for $\LL_C$, take sets of propositional variables $P_i$ and let
	$P=\cup_iP_i$. Let $\gM_i\in M^{P_i}$ be such that \eqref{eq:azonosak} holds.
	Each $\gM_i$ is a function $\gM_i:P_i\to \{0,1\}$, and \eqref{eq:azonosak}
	is equivalent to requiring 
	\begin{equation}
		\gM_i\upharpoonright P_i\cap P_j \ = \ \gM_j\upharpoonright P_i\cap P_j\,,
	\end{equation}
	whenever $P_i\cap P_j\neq\emptyset$. But in this case the union
	$\gM = \bigcup_i\gM_i$ is a well-defined function $\gM:P\to \{0,1\}$, and 
	clearly \eqref{eq:azonos} holds. 
	
	Let us turn to the $\LL_{FOL}(V)$ case. If $V=\emptyset$, then there
	is nothing to prove as the patchwork property holds vacuously. Otherwise,
	models of $\LL_{FOL}(V)$ are determined by the similarity types $t$
	independently of the set $V$. Fix the nonempty set $V$.
	
	Take $P_i\in \Sig$ for $i\in I$ such that $P=\cup_iP_i\in \Sig$. Then there
	are similarity types $t_i$ such that $P_i = P_{t_i}(V)$ for $i\in I$, 
	and $t=\cup_i t_i$ is a similarity type such that $P=P_t(V)$. 
	If the $t_i$-type first-order models 
	$\gM_i = \<M_i, r^{\gM_i}\>_{r\in\dom(t_i)}$ satisfy \eqref{eq:azonosak},
	then for each $i, j\in I$, the interpretations of the symbols belonging
	to both $t_i$ and $t_j$ are the same in $\gM_i$ and $\gM_j$. 
	Also, the constant $\top$ (truth) has 
	the same interpretation in every $\gM_i$, meaning that these models have
	the same universe $M=M_i$ ($i\in I$). 
	It follows that the model $\gM = \<M, r^{\gM_i}\>_{r\in \dom(t_i), i\in I}$
	is well defined and the corresponding meaning function satisfies 
	\eqref{eq:azonos}.
\end{proof}

\begin{theorem}\label{thm:FOLgeneral}
	For an infinite set $V$ of variables, the class
	\begin{equation}
		\LL_{FOL}(V) = \< \FOL_t(V):\; P_t(V)\in\Sig(V)\>
	\end{equation}
	is a logic family in the sense of Def.\ref{def:generallogic}.
\end{theorem}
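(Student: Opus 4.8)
The plan is to verify the five conditions of Definition \ref{def:generallogic} for the class $\Sig=\Sig(V)$, arranging things so that the awkward condition (4) is obtained for free from the machinery already in place. First I would dispose of the trivial conditions. Conditions (1) and (2) are immediate: by construction $P_t(V)$ is the set of atomic formulas of $\FOL_t(V)$, and the connective set $\{\land,\lnot,\bot,\exists v,\ v=w : v,w\in V\}$ depends only on $V$ and not on $t$, so it is common to all members of the family.

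For (3) I would first note that, since $V$ is nonempty, the string $r(v_1,\dots,v_n)$ records both the relation symbol $r$ and its arity $n$; hence $P_t(V)\subseteq P_{t'}(V)$ holds precisely when $t\subseteq t'$ as arity-preserving functions on relation symbols. Given $t\subseteq t'$, the usual first-order reduct map $\gN\mapsto\gN\!\upharpoonright\!\dom(t)$ provides the function $f^M:M_{t'}\to M_t$ required by Definition \ref{def:reduct}: it is onto (interpret the extra symbols arbitrarily) and it preserves meaning and satisfaction of $t$-formulas, which is classical first-order model theory.

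The substantive bookkeeping lives in (5), and the idea there is renaming relation symbols. Any arity-preserving bijection between $\dom(t)$ and $\dom(t')$ induces a bijection $P_t(V)\to P_{t'}(V)$, $r(\bar v)\mapsto r'(\bar v)$, which extends to an isomorphism $\FOL_t(V)\cong\FOL_{t'}(V)$ in the sense of Definition \ref{def:isom} by reinterpreting relations in the models. For (5c), take any nonempty relational $t$; since $V\neq\emptyset$, the set $P_t(V)$ is nonempty. For (5a), given $P_t(V)$ and a set $H$, I would choose fresh relation symbols of the same arities so that no atomic formula built from them lies in $H$ (possible because $H$ is a set while relation symbols may be drawn from a proper class), and the renaming yields an isomorphic copy whose signature is disjoint from $H$. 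For (5b), given $P_{t_i}(V)$ for $i$ in a nonempty $I$, I would rename the symbols of each $t_i$ to pairwise disjoint sets of symbols $t_i'$; then the $P_{t_i'}(V)$ are pairwise disjoint, and since the domains $\dom(t_i')$ are disjoint their union $t=\bigcup_i t_i'$ is a well-defined similarity type, so $\bigcup_i P_{t_i'}(V)=P_t(V)\in\Sig(V)$. This shows that $\LL_{FOL}(V)$ is a pre-family of logics.

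Finally, I would obtain (4) without any direct computation with free products or tautological congruences. Because $V$ is infinite, each $\FOL_t(V)$ is conditionally substitutional by Theorem \ref{thm:FOLconsubs}, so $\LL_{FOL}(V)$ is a \emph{conditionally substitutional} pre-family; by Theorem \ref{thm:pwp1} it has the patchwork property of models; and by Theorem \ref{thm:pwp2}(i) the patchwork property implies (4) in conditionally substitutional pre-families. Hence all of (1)--(5) hold and $\LL_{FOL}(V)$ is a logic family. The only genuinely delicate points are the bookkeeping in (5a) and (5b) — guaranteeing that the renamed signatures are disjoint (from $H$, and from one another) while their union is still of the form $P_t(V)$ — together with the essential use of $V$ being infinite, which enters exactly through Theorem \ref{thm:FOLconsubs}. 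The real content of (4) is outsourced to the patchwork theorem, so no separate hard argument is needed here.
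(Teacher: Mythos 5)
Your proposal is correct and follows essentially the same route as the paper's own proof: condition (3) via the standard first-order reduct map (surjective because every $t_P$-type model extends to a $t_Q$-type one), and condition (4) obtained exactly as in the paper by combining Theorem \ref{thm:FOLconsubs} (conditional substitutionality, where infiniteness of $V$ enters), Theorem \ref{thm:pwp1} (patchwork property), and Theorem \ref{thm:pwp2}(i). The only difference is that you spell out the renaming bookkeeping for condition (5), which the paper dismisses as straightforward.
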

\begin{proof}
	Let $V$ be a nonempty set of variables. 
	Verifying items (1), (2), (5) of \ref{def:generallogic} 
	are straightforward. 
	As for \ref{def:generallogic}(3): 
	if $P\subseteq Q$ and $P, Q\in \Sig(V)$, then the similarity
	type $t_Q$ corresponding to $Q$ is an extension of that of $P$ 
	(i.e., $t_P\subseteq t_Q$). Let us define the reduct function $f:M^Q\to M^P$ such that $f$ assigns to a $t_Q$-type model its $t_P$-type reduct which is
	a $t_P$-type model. Every $t_P$-type model can be extended into a 
	$t_Q$-type one and so
	\begin{equation}
		\{\mng^P_{\gM} : \gM\in M^P\} = 
		\{ \mng^Q_{\gM}\upharpoonright F^P:\; \gM\in M^Q\},
	\end{equation}
	i.e.,  $f$ is surjective and meanings are respected by it. Now, referential transparency of first-order logic provides that also validity is respected by $f$. Hence, $\cL^P$ is a reduct of $\cL^Q$.
	Thus, $\LL_{FOL}(V)$ is a pre-family of logics, for all nonempty $V$.
	
	To check (4), assume that $V$ is infinite. Then $\LL_{FOL}(V)$ is 
	conditionally substitutional by Theorem \ref{thm:FOLconsubs}, so (4) holds by Theorem \ref{thm:pwp2}(i)
	since $\LL_{FOL}(V)$ has the patchwork property of models by Theorem \ref{thm:pwp1}.
	\end{proof}

\begin{theorem}\label{thm:nofamily}
	If $V$ is finite and $|V|\ge 2$, then $\LL_{FOL}(V)$ is not a logic family but it is a pre-family of logics.
\end{theorem}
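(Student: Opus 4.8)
The plan is to split the statement into the easy pre-family part and the substantive failure of condition (4). For the pre-family part I would simply rerun the verification of conditions (1), (2), (3) and (5) from the proof of Theorem \ref{thm:FOLgeneral}: none of those verifications used that $V$ is infinite (infinitude was invoked only to obtain conditional substitutivity and hence (4)). In particular, for $V$ finite with $|V|\ge 2$ the signatures $P_t(V)$ still form a class $\Sig(V)$ closed under the operations required by (5), reducts still correspond to expansions of similarity types, and referential transparency still yields (3). Thus $\LL_{FOL}(V)$ is a pre-family, and by Theorem \ref{thm:pwp1} it even has the patchwork property of models.

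The real point is therefore to exhibit signatures $P_i\in\Sig(V)$ with union $P\in\Sig(V)$ for which (4) fails, which I would do through the characterization Theorem \ref{gen4}\eqref{4homa} (equivalently the quasi-equational form \eqref{4subd}): I must produce $\gA\in\Alg_m(\cL^P)$ and a homomorphism $h:\F^P\to\gA$ with $\bigcup_i\sim^{P_i}\subseteq\ker h$ but $\sim^P\not\subseteq\ker h$. Since a single relation symbol cannot be split (all its atoms lie in every $P_{t'}(V)$ containing the symbol), the only admissible decompositions are by relation symbol; so I would fix a similarity type $t$ with at least two relation symbols of arity $|V|$ — the case where the variable orderings genuinely use up all the variables and hence, by the discussion following Theorem \ref{thm:FOLconsubs}, cannot be eliminated in favour of restricted formulas — and let $P_i$ collect the atoms of the $i$-th symbol.

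The construction of $h$ I would model on, and strengthen, the homomorphism of Example \ref{claim:FOLnnocondsubs}. There one takes a finite (e.g.\ cyclic) model whose concept algebra $\gA$ carries automorphisms that permute the meanings of the ``transposed'' atoms $r(v_1,v_0,\dots)$ while fixing the meanings of the restricted atoms and of the equality constant; composing $\mng_{\gM}$ with such an automorphism yields a homomorphism that is not a meaning function. For (4) the decisive extra idea is to apply \emph{different}, mutually incompatible such twists to the atoms of the different relation symbols: each twist is an automorphism of $\gA$, so the restriction of $h$ to $\F^{P_i}$ differs from $\mng_{\gM}\upharpoonright\F^{P_i}$ only by an automorphism and therefore still kills $\sim^{P_i}$, yet the combined assignment on $P$ is realized by no single automorphism (nor by any model), and so need not respect $\sim^P$. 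Finiteness of $V$ is exactly what makes this possible: the permutation relating $r(v_0,\dots)$ to its reorderings is not expressible, so the twists on the separate symbols are not forced to be synchronized — the very synchronization that, in the infinite-variable case, is supplied by the sets $E(r)$ in Theorem \ref{thm:FOLconsubs} and that forces (4) there via patchwork and Theorem \ref{thm:pwp2}(i).

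The main obstacle is precisely the final verification: pinning down a concrete tautology $\langle\phi,\psi\rangle\in\sim^P$ that the twisted $h$ separates, i.e.\ showing that this pair is not a quasi-equational consequence over $\Alg_m(\cL^P)$ of $\bigcup_i\sim^{P_i}$. This is delicate because the ``inconsistency'' introduced by the incompatible twists concerns variable permutation, which is invisible to the term operations of the concept algebra; one must choose $t$ complex enough (this is the ``complex enough $t$'' alluded to before Example \ref{claim:FOLnnocondsubs}) and a witnessing model so that the permutation obstruction surfaces as an honest equivalence of formulas rather than merely as a non-expressible relation. Once such a pair is found and both inclusions $\sim^{P_i}\subseteq\ker h$ are checked via the automorphism description, Theorem \ref{gen4}\eqref{4homa} gives the failure of (4), whence $\LL_{FOL}(V)$ is not a logic family; the contrast with the infinite case, together with Theorem \ref{thm:pwp1} and Theorem \ref{thm:pwp2}(i), also explains a posteriori why $\LL_{FOL}(V)$ fails to be conditionally substitutional.
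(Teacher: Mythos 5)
Your scaffolding matches the paper's: the pre-family part by rerunning the verification in Theorem \ref{thm:FOLgeneral}, the reduction of the failure of (4) to Theorem \ref{gen4}\eqref{4homa}, the decomposition by relation symbol, and the insistence on symbols of arity $|V|$ are all exactly right (the paper also records, as you do, that non-conditional-substitutionality follows a posteriori from Theorems \ref{thm:pwp1} and \ref{thm:pwp2}(i)). The genuine gap is that the entire mathematical content of the theorem is the explicit witness --- a concrete model $\gM$, a concrete homomorphism $h$ with $\bigcup_{i}\sim^{P_i}\subseteq\ker(h)$, and a concrete pair in $\sim^P$ outside $\ker(h)$ --- and your proposal stops exactly there, conceding that producing the separated tautology is ``the main obstacle.'' As written, nothing rules out that \emph{every} $h$ built by your recipe respects $\sim^P$: this does happen when the per-symbol twists are restrictions of a single global automorphism (then $\ker(h)=\ker(\mng_{\gM})\supseteq\;\sim^P$, which is why Example \ref{claim:FOLnnocondsubs} refutes only conditional substitutionality, not (4)), and it happens for binary symbols whenever $|V|\ge 3$, since then transposed atoms are term-definable from restricted ones and automorphisms commute with term operations. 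So the existence of ``mutually incompatible'' twists that actually break a joint tautology is precisely the thing to be proved, and it must be proved for every finite $n=|V|\ge 2$.

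It is also worth noting that the paper's construction is not an automorphism twist at all: in its $|V|=2$ case the relations $R_i^{\gM}$ are symmetric, so $\mng_{\gM}$ identifies $R_i(v_0,v_1)$ with $R_i(v_1,v_0)$ while $h$ separates them; the inclusion $\sim^{P_i}\subseteq\ker(h)$ is instead certified by showing that $h\upharpoonright \F^{P_i}$ has the same kernel as the meaning function of an entirely different model (a three-element cycle), via an isomorphism of concept algebras, and the paper uses three relation symbols and a separate, more elaborate construction for $|V|\ge 3$. For what it is worth, your route can be completed when $|V|=2$: take the five-element model of Example \ref{claim:FOLnnocondsubs}, let $\alpha_R$ be the identity and $\alpha_S$ the automorphism interchanging $\mng_{\gM}(R(v_1,v_0))$ and $\mng_{\gM}(S(v_1,v_0))$; the resulting $h$ keeps both $\sim^{P_R}$ and $\sim^{P_S}$ in its kernel yet separates the logically equivalent sentences $\exists v_0\exists v_1(R(v_0,v_1)\land S(v_0,v_1))$ and $\exists v_0\exists v_1(R(v_1,v_0)\land S(v_1,v_0))$, both of which say that $R\cap S$ is nonempty. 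But for arbitrary finite $|V|=n\ge 2$ you would still have to construct, for each $n$, a model of a type with two or more $n$-ary symbols whose concept algebra admits the required automorphism together with a joint tautology it breaks --- work of the same order as the paper's proof, and absent from yours.
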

\begin{proof}
We have already seen in the previous proof that $\LL_{FOL}(V)$ is a pre-family of logics. Assume that $V$ is finite and $|V|\ge 2$, we have to show that $\LL_{FOL}(V)$ does not satisfy condition (4). 

Assume first that $|V|=2$, say $V=\{ v_0,v_1\}$. Let the similarity type $t$ have three binary relation symbols $R_1,R_2,R_3$ and let $\cL$ denote $\FOL_t(V)$. Define the model $\gM\in M_t$ as $\gM=\langle M,R_1^{\gM},R_2^{\gM},R_3^{\gM}\rangle$ with  \[ M=\{ 0,1,2,3,4,5,6\}\ \mbox{ and }\ R_i^{\gM}=\{ \< j,j+i\> : j\in M\}\cup \{ \< j,j-i\> : j\in M\} \]   where $+$ and $-$ are understood as addition and subtraction modulo $7$ and $i=1,2,3$. Let $\gA=\mng_{\gM}(\F_t(V))$, the concept algebra of $\gM$. Define $h:\F_t(V)\to\gA\in\Alg(\cL)$ by
\[ h(R_i(v_0,v_1))=R_i^{\gM},\quad h(R_i(v_1,v_0))=R_j^{\gM}\cup R_k^{\gM},\quad \mbox{ and } \]
 $h(R_i(v_0,v_0))=h(R_i(v_1,v_1))=\emptyset$ for $\{ i,j,k\}=\{ 1,2,3\}$.
 
First we show that $\sim_{\cL}\subseteq\ker(h)$ does not hold.
Indeed, the pair
\[ \langle\ \exists v_0\exists v_1(R_1(v_0,v_1)\land R_2(v_0,v_1)),\ \exists v_0\exists v_1(R_1(v_1,v_0)\land R_2(v_1,v_0))\ \rangle \]
is in the tautological congruence of $\cL$, but it is not in $\ker(h)$. (We note that this pair expresses that a relation is empty iff its converse is empty.)

Let $t=t_1\cup t_2\cup t_3$ where $t_i$ is the similarity type with one binary relation symbol $R_i$ and let $\sim_i$ be the tautological congruence relation of $\cL_i=\FOL_{t_i}(V)$, for $i=1,2,3$. We are going to show that $\sim_i\subseteq\ker(h)$ for $i=1,2,3$, this will show that (4) fails for $\cL$ by Theorem \ref{gen4}\eqref{4homa}. 

To show  $\sim_1\subseteq\ker(h)$, we will show that the subalgebra  $\gB$ of $\gA$ generated by $\{ h(R_1(x,y)) : x,y\in V\}$ is isomorphic to the meaning algebra of a model of $t_1$. Clearly, $\gA$ is a finite Boolean set algebra with some extra operations, so $\gB$ is finite, and it is easy to see by induction, that $\Id_M=h(v_0=v_1)$, ${R_1}^{\gM}$, and ${R_2}^{\gM}\cup {R_3}^{\gM}$ are atoms of $\gB$. Since their union is the unit of the set algebra, they are all the atoms of $\gB$. Each of these is a binary relation with domain and range $M$, thus the operations $\exists v_j^{\gB}$ for $j=1,2$ take all nonempty elements of $\gB$ to $M\times M$ and take $\emptyset$ to itself. Let now $\gN=\langle N, R_1^{\gN}\rangle$ where
\[ N=\{0,1,2\}\quad\mbox{ and }\quad R_1^{\gN}=\{\<0,1\>, \<1,2\>, \<2,0\>\} .\]
Let $\gC$ denote the meaning algebra of $\gN$. It is easy to see that $\gC$  also has three atoms, $\Id_N=\mng_{\gN}(v_0=v_1)$, $\mng_{\gN}(R_1(v_0,v_1))$ and $\mng_{\gN}(R_1(v_1,v_0))$, with all these having domain and range $N$. Therefore, $\{\< h(\phi),\mng_{\gN}(\phi)\> : \phi\in F_{t_1}(V)\}$ is an isomorphism between $\gB$ and $\gC$. This means that $\sim_1\subseteq\ker(\mng_{\gN})\subseteq\ker(h)$ and we are done.
The proofs for $\sim_i\subseteq\ker(h)$, $i=2,3$ are completely analogous. We have seen that (4) does not hold for $\LL_{FOL}(V)$ if $|V|=2$.

Assume now $|V|=n\ge 3$, say $V=\{ v_i : 0\le i< n\}$. The proof is analogous but a bit more complex because we have more variable-substitutions.  Let the similarity type $t$ have three $n$-place relation symbols $R_1,R_2,R_3$. 
Let $U_0=U_1=\{0,1,\dots,n+6\}$, for $2\le k<n$ let $U_k=\{(k-1)(n+7), (k-1)(n+7)+1, \dots, (k-1)(n+7)+n+6\}$ and let $M=\bigcup\{ U_k : 0\le k<n\}$. Let $W = \{ s\in{}^nM : (\forall k<n)s_k\in U_k\}$. Define the model $\gM=\langle M, R_1^{\gM}, R_2^{\gM}, R_3^{\gM}\rangle$ as
\begin{description}
	\item{} $R_1^{\gM} = \{ s\in W : |s_0-s_1|=1\ \mbox{ iff }\ \sum\{ s_k : 2\le k<n\}=1\}$,
	\item{} $R_2^{\gM} = \{ s\in W : |s_0-s_1|=2\ \mbox{ iff }\ \sum\{ s_k : 2\le k<n\}=2\}$,
	\item{} $R_3^{\gM} = \{ s\in W : |s_0-s_1|\ge 3\ \mbox{ iff }\ \sum\{ s_k : 2\le k<n\}\ge 3\}$,
\end{description}
where subtraction and sum are meant modulo $n+7$. Let $\gA=\mng_{\gM}(\F_t(V))$, the concept algebra of $\gM$. 
Let $\overline{n}=\{0,1,...,n-1\}$ and let $T$ denote the set of functions $\tau:\overline{n}\to\overline{n}$. For $\tau\in T$ let 
$R_i(\tau v)$ denote the formula $R_i(v_{\tau(0)}\dots v_{\tau(n-1})$.
Then $\F_t(V)$ is freely generated by $\{ R_i(\tau v) : \tau\in T, i=1,2,3\}$.
We define $h:\F_t(V)\to\gA$ as follows. For $\{ i,j,k\}=\{ 1,2,3\}$, 
\begin{description}
	\item{} $h(R_i(\tau v))=\{ \tau\circ s : s\in R_i^{\gM}\}$\quad if $\tau^{-1}(0)<\tau^{-1}(1)$ and $\tau$ is one-one,
	\item{} $h(R_i(\tau v))=\{ \tau\circ s : s\in R_j^{\gM}\cup R_k^{\gM}\}$\quad if $\tau^{-1}(0)>\tau^{-1}(1)$ and $\tau$ is one-one,
	\item{} $h(R_i(\tau v))=\emptyset$\quad otherwise.
\end{description}

The proof for $\sim_{\cL}\subseteq\ker(h)$ failing is as before, with the obvious modifications. Let $t=t_1\cup t_2\cup t_3$ where $t_i$ is the similarity type with one $n$-place relation symbol $R_i$ and let $\sim_i$ be the tautological congruence relation of $\cL_i=\FOL_{t_i}(V)$, for $i=1,2,3$.

To show  $\sim_1\subseteq\ker(h)$, take the subalgebra  $\gB$ of $\gA$ generated by $\{ h(R_1(\tau v)) : \tau\in T\}$. 
Let $E$ be the set of all functions $\varepsilon:\{\langle i,j\rangle : i,j<n\}\to\{ 0,1\}$. For $\sigma\in T$ and $\varepsilon\in E$ let
\[ U(\sigma,\varepsilon) = \{ s\in {}^nM : (\forall i,j<n)( s_i\in U_{\sigma(i)}\mbox{ and }[s_i=s_j\mbox{ iff }\varepsilon(i,j)=0] )\}.\]
It is not hard to see that the atoms of $\gB$ are the nonempty elements of 
\[ \{ h(R_1(\tau v)) : \tau\in T\} \cup \{ U(\sigma,\varepsilon) : \sigma\in T, \varepsilon\in E\}.\] 
Let us define the model $\gN=\langle M,R_1^{\gN}\rangle$ as
\[ R_1^{\gN} = [\{ s\in W : s_0<s_1\}\setminus\{\langle 0,n+6\rangle\}]\cup\{\langle n+6,0\rangle\} .\]
It is not hard to see that $\{ \langle h(\phi),\mng_{\gN}(\phi)\rangle : \phi\in F_{t_1}(V)\}$ is an isomorphism between $\gB$ and the concept algebra of $\gN$, and we are done as in the case of $|V|=2$.
\end{proof}

We note that $\LL_{FOL}(V)$ is a substitutional logic family for $|V|=1$, see Remark \ref{remark:FOLuresegyelemu}.

\bigskip

\section*{Acknowledgements}
Zal\'an Gyenis was supported by the grant 2019/34/E/HS1/00044 of the National Science Centre (Poland), and by the grant of the Hungarian National Research, Development and Innovation Office, contract number: K-134275.

\end{document}